\numberwithin{equation}{section}
\newtheorem{theorem}{Theorem}[section]
\newtheorem{proposition}[theorem]{Proposition}
\newtheorem{lemma}[theorem]{Lemma}
\newtheorem{corollary}[theorem]{Corollary}
\newtheorem{definition}[theorem]{Definition}
\newtheorem{remark}[theorem]{Remark}
\newenvironment{proof}{\noindent\textbf{Proof}}
                      {$\Box$\vskip\theorempostskipamount}
\begin{document}

\title{\textbf{Twisted associativity of the cyclically reduced product of words, part 2}}

\author{Carmelo Vaccaro}

\date{}

\maketitle

\begin{abstract} The cyclically reduced product of two words $u, v$, denoted $u * v$, is the cyclically reduced form of the concatenation of $u$ by $v$. This product is not associative. Recently S. V. Ivanov has proved that the Andrews-Curtis conjecture can be restated in terms of the cyclically reduced product and cyclic permutations instead of the reduced product and conjugations.
	
In a previous paper we have proved that $*$ verifies generalizations of properties of the product in the free group. In another previous paper we have proved that $*$ verifies a generalized version of the associativity property in a special case. In the present paper we prove that a more general version of the associativity property holds for $*$ in the general case.
 \end{abstract}

\smallskip \smallskip

\textit{Key words}: cyclically reduced product, associative property, free monoid, free group, identities among relations.

\smallskip \smallskip

\textit{2010 Mathematics Subject Classification}: 20E05, 20M05, 68R15.

\section*{Introduction}

Let $X$ be a set of letters, let $X^{-1}$ be the set of inverses of elements of $X$ and let $\mathcal{M}(X \cup X^{-1})$ be the free monoid on $X \cup X^{-1}$. The elements of $\mathcal{M}(X \cup X^{-1})$ are the non-necessarily reduced words on $X$. We denote $\mathcal{F}(X)$ the free group on $X$ and we consider it as the subset of $\mathcal{M}(X \cup X^{-1})$ consisting of reduced words. We denote $\hat{\mathcal{F}}(X)$ the set of cyclically reduced words on $X$.

Given $v, w \in \mathcal{M}(X \cup X^{-1})$ the cyclically reduced product of $v$ by $w$, denoted $v*w$, is defined as the cyclically reduced form of the concatenation $vw$. By contrast, the reduced product of $v$ by $w$, which we denote $v \cdot w$, is defined as the reduced form of $vw$.

The cyclically reduced product has applications to the Andrews-Curtis conjecture: in \cite{Ivanov06} and \cite{Ivanov18} S. V. Ivanov has proved that the conjecture (with and without stabilizations) is true if and only if in the definition of the conjecture we replace the operations of reduced product and conjugations with the cyclically reduced product and cyclic permutations. The importance of this result stems from the fact that while there are infinitely many conjugates of one word, there are only finitely many cyclic permutations, thus making much easier the search of Andrews-Curtis trivializations by enumerations of relators, like for example the approaches used in \cite{BMc} or \cite{PU}. 

\smallskip

The set $\hat{\mathcal{F}}(X)$ is closed with respect to $*$, like the free group is closed with respect to $\cdot$, but the structure of $(\hat{\mathcal{F}}(X), *)$ is much less nice than that of $(\mathcal{F}(X), \cdot)$, mainly because $*$ is not associative.

In \cite{FirstArticle} we have started the study of properties of $*$ and of $\hat{\mathcal{F}}(X)$ equipped with $*$ ; in particular we have proved that $*$ verifies generalizations of properties of the reduced product. In \cite{SecondArticle} we have proved that $*$ verifies a generalized version of the associativity property in a special case. In the present paper we prove that a more general version of the associativity property holds for $*$ in the general case.

\smallskip

Let us show this result with an example. Let $X = \{x, y\}$ and let $u := x y x y$, $v := y^{-1} x^{-1} y^{-2}$, $w := x y x^{-1} y$. Then 
		\begin{equation} \label{exp1} (u*v)*w = x y^{-1} x y x^{-1} y,\end{equation}
while
		$$u*(v*w) = x y x y x^{-1} y^{-2} x y x^{-1}.$$
The two expressions are different and not even of the same length (in particular they are not cyclic permutation one of the other).

In \cite{SecondArticle} we have proved that if we take $v = u^{-1}$ then the cyclically reduced product verifies a ``twisted" version of the associative property, i.e., up to cyclic permutations of some of the terms. However for a general $v$ this property does not hold true, but a more general one holds. 

Indeed let $u, v$ and $w$ be as defined above; we have that 
		$$v * w = x^{-1} y^{-2} x y x^{-1}.$$
The word $f := y^{-2} x y x^{-2}$ is a cyclic permutation of $v * w$ and we have that
		$$u * f = y x y^{-1} x y x^{-1},$$
which is a cyclic permutation of $u*(v*w)$. Therefore there exists a cyclic permutation $f$ of $v * w$ such that $u * f$ is a cyclic permutation of $(u*v)*w$.

But we can say more. Indeed we have that $u *w = x y x y x y x^{-1} y$. The word $g := x y x^{-1} y x y x y$ is a cyclic permutation of $u * w$ and we have that
		$$v * g = y^{-1} x y x^{-1} y x,$$
which is a cyclic permutation of $u*(v*w)$. Therefore there exists a cyclic permutation $g$ of $u * w$ such that $v * g$ is a cyclic permutation of $(u*v)*w$.

We can conjecture that this situation is general: given words $u, v, w$ there exist cyclic permutations $u', v', w'$ of $u, v, w$ respectively and cyclic permutations $f$ and $g$ of $v' * w'$ and of $u' * w'$ respectively such that $(u*v)*w$ is a cyclic permutation of $u' * f$ or of $v' * g$ (or of both).

We will prove in Theorem \ref{mainTheor2} that this is indeed the case, but as in \cite{SecondArticle}, that also another situation can happen: that there exists a non-empty word $h$ such that the concatenation of words $u' h f h^{-1}$ (or the concatenation $v' h g h^{-1}$) is cyclically reduced and is a cyclic permutation of $(u*v)*w$.

\smallskip

The result that we prove in Theorem \ref{mainTheor2} is even stronger: first of all, we do not need to assume that $u, v$ and $w$ be cyclically reduced, not even that they are reduced; secondly, the result is true also if in (\ref{exp1}) instead of $u*v$ we take a cyclic permutation of $u*v$.

Let us state the results of Theorem \ref{mainTheor2}\footnote{Part a) is stated in a less precise way than in Theorem \ref{mainTheor2}.}; we use the symbol $\sim$ to denote that two words are cyclic permutation one of the other. If $u, v, w$ are words and $d$ a cyclic permutation of $u*v$, there exist words $p, q, w', f, h$ such that	
	\begin{enumerate}[a)]
		
		\item either $p \sim u$ and $q \sim v$ or $p \sim v$ and $q \sim u$,
		
		\item $w' \sim w$,
		
		\item $f \sim q * w'$,	
	\end{enumerate}
and such that
	\begin{equation} \label{d*wIntro} d*w \sim p*(h f h^{-1}),\end{equation} 
with $p*(h f h^{-1}) = p h f h^{-1}$ if $h \neq 1$.

\smallskip

As we will see in (\ref{permCon2}) of Proposition \ref{summar}, the fact that $d*w$ is a cyclic permutation of $p*(h f h^{-1})$ implies that a conjugate of $d*w$ is equal to $p*(h f h^{-1})$. Since $d*w$ and $p*(h f h^{-1})$ are products of conjugates of $u, v, w$, the equivalence (\ref{d*wIntro}) gives an equality between two products of conjugates of $u, v, w$. Theorem \ref{mainTheor2} also proves, as Remark \ref{samProdConjRel} makes it clear, that these two expressions are indeed the same, that is with the same conjugating elements.

Now the question arises whether, in the case where $u, v$ and $w$ are relators of a group presentation, the van Kampen diagrams associated with $d*w$ and $p*(h f h^{-1})$ are homeomorphic. Remark \ref{nonHomeoVK} shows that this is not the case, however Theorem \ref{mainTheor2} proves that even if these two diagrams are not homeomorphic, they have some internal paths that are homeomorphic.

\smallskip

The proof of Theorem \ref{mainTheor2} is long 42 pages and occupies Sections \ref{mainSec}-\ref{prLe4}. The proof is not direct: we show that Theorem \ref{mainTheor2} follows from Lemma \ref{mainLemma} which requires analyzing a number of subcases much less than what a direct proof would have required.

\smallskip

This paper is a continuation of \cite{FirstArticle} and \cite{SecondArticle} but can be read independently since it is self contained. All the results from \cite{FirstArticle} and \cite{SecondArticle} needed in this paper are stated in Section \ref{S1}.

\section{Words, cyclic permutations and cyclically reduced product} \label{S1}

Let $Y$ be a set and let us consider $\mathcal{M}(Y)$, the free monoid on $Y$. The elements of $Y$ are called \textit{letters}, those of $\mathcal{M}(Y)$ \textit{the words in $Y$}. As usual given words $v, w \in \mathcal{M}(Y)$ we will denote $vw$ the product of $v$ by $w$, which is the concatenation of the words $v$ and $w$. The word with no letters, which is the identity element of $\mathcal{M}(Y)$, is denoted 1.

Let $v, w \in \mathcal{M}(Y)$; we say that $w$ is a \textit{subword of $v$} if there exist $p, q \in \mathcal{M}(Y)$ such that $v = p w q$. In this case we say that $w$ is a \textit{prefix of $v$} if $p = 1$ and that $w$ is a \textit{suffix of $v$} if $q = 1$.

Let $v = y_1 \cdots y_n \in \mathcal{M}(Y)$ with $y_1, \cdots, y_n \in Y$. The \textit{length of $v$} is defined as $|v| := n$. The \textit{reverse of $v$} is defined as the word $\underline{v} := y_n \cdots y_1$, where the order of the letters is the reverse as that of $v$. 

Let $w_1$ and $w_2$ be words and let $w:=w_1 w_2$. The word $w_2 w_1$ is called a \textit{cyclic permutation of $w$}. Given two words $u$ and $v$ the relationship ``$u$ is a cyclic permutation of $v$" is an equivalence that we denote $u \sim v$.

Let $X$ be a set; we denote $\mathcal{F}(X)$ the free group on $X$ and we consider $\mathcal{F}(X)$ as a subset\footnote{Usually $\mathcal{F}(X)$ is considered a quotient of $\mathcal{M}(X \cup X^{-1})$, but in this paper we will not follow this habit.} of $\mathcal{M}(X \cup X^{-1})$. In particular $\mathcal{F}(X)$ is the set of \textit{reduced words on $X$}, i.e., the words of the form $x_1 \cdots x_n$ with $x_i \in X \cup X^{-1}$ and $x_{i+1} \neq x_i^{-1}$ for $i = 1, \cdots, n-1$.

We denote $\rho: \mathcal{M}(X \cup X^{-1}) \rightarrow \mathcal{F}(X)$ the function sending a word to its unique \textit{reduced form}. Given $u, v \in \mathcal{F}(X)$ the product\footnote{The product of two reduced words in $\mathcal{F}(X)$ does not coincide with the product of the same words in $\mathcal{M}(X \cup X^{-1})$. In particular $\mathcal{F}(X)$ is not a subgroup of $\mathcal{M}(X \cup X^{-1})$.} of $u$ by $v$ in $\mathcal{F}(X)$ is $\rho(uv)$, the reduced form of $uv$.

\medskip

\noindent \textbf{Convention} In this paper we adopt the following conventions:

1. With the term \textit{word} we mean a non-necessarily reduced word, i.e., an element of $\mathcal{M}(X \cup X^{-1})$.

2. Given $u$, $v_1, \dotsc, v_n \in \mathcal{M}(X \cup X^{-1})$, with the notation $u = v_1 \dots v_n$ we mean the equality in $\mathcal{M}(X \cup X^{-1})$ of $u$ with the concatenation of words $v_1 \dots v_n$ even if $u$ and all the $v_j$ belong to $\mathcal{F}(X)$. This kind of equality is called a \textit{factorization} of $v$ in the Combinatorics of Words literature (see \cite{Kar}, pag. 2 or \cite{ChofKar}, pag. 332). The equality in $\mathcal{F}(X)$ of $u$ with the reduced product of $v_1, \ldots, v_n$ will be denoted by $u = v_1 \cdot \, \ldots \, \cdot v_n$ and corresponds to the equality $\rho(u) = \rho(v_1 \ldots v_n)$ in $\mathcal{M}(X \cup X^{-1})$.

\medskip

The operations of inversion and reversion of words commute one with the other, that is given a word $v$ we have that $(\underline{v})^{-1} = \underline{(v^{-1})}$. Thus we will denote $\underline{v}^{-1}$ the inverse of the reversion of $v$ without fear of ambiguity.

We say that a reduced word is \textit{cyclically reduced} if its last letter is not the inverse of the first one, that is if all its cyclic permutations are reduced. We denote $\hat{\mathcal{F}}(X)$ the set of cyclically reduced words.

Given a word $w$ there exist unique $t \in \mathcal{F}(X)$ and $c \in \hat{\mathcal{F}}(X)$ such that $\rho(w) = t c t^{-1}$. The word $c$ is called the \textit{cyclically reduced form of $w$} and is denoted $\hat{\rho}(w)$. In particular we consider the function $\hat{\rho}: \mathcal{M}(X \cup X^{-1}) \rightarrow \hat{\mathcal{F}}(X)$ sending a word to its unique cyclically reduced form. Therefore we have that $\rho(w) = t \hat{\rho}(w) t^{-1}$ and that $\rho(w)$ is cyclically reduced if and only if $t=1$. 

Given words $u$ and $v$ we denote $u*v$ the \textit{cyclically reduced product of $u$ by $v$}, i.e., $u * v := \hat{\rho}(uv)$. This product is non-associative. Indeed let $u = xy$, $v = x^{-1}$ and $w = x$; then $(u*v)*w = yx$ while $u*(v*w) = xy$.

\begin{proposition} \label{summar} \rm Let $u, v, u_1, u_2, \cdots, u_n$ be words; then the following results hold: 	
	\begin{enumerate} [(1)]

		\item \label{reverse2} The reverse of $u_1 u_2 \cdots u_n$ is the word $\underline{u_n} \cdots \underline{u_2} \, \underline{u_1}$. 
		
		\item \label{reversesim} If $u \sim v$ then $\underline{u} \sim \underline{v}$. 
			
		\item \label{cpCanc} Let $u$ be a cyclic permutation of $\rho(v)$. Then there exists a cyclic permutation $v'$ of $v$ such that $\rho(v') = \rho(u)$. 

		\item \label{u*vNonRed} $u*v = \rho(u) * \rho(v)$. More generally if $u_1, v_1$ are words such that $\rho(u_1) = \rho(u)$ and $\rho(v_1) = \rho(v)$, then $u*v = u_1 * v_1$. 
		
		\item \label{jarem} $u*v = 1$ if and only if $\rho(u) = \rho(v^{-1})$. 

		\item \label{scope} The cyclically reduced form of $u$ is equal to the reduced form of some conjugate of $u$, that is there exists a word $\alpha$ such that $\hat{\rho}(u) = \rho(\alpha u \alpha^{-1})$. 		

		\item \label{permCon2} Let $u$ be a cyclic permutation of $v$; then the reduced form of $u$ is the reduced form of some conjugate of $v$. 
	
		\item \label{revCRP} The reverse of $u*v$ is equal to $\underline{v}*\underline{u}$ and the cancellations made to obtain $\underline{v}*\underline{u}$ from $\underline{v} \underline{u}$ are the reverse of those made to obtain $u*v$ from $uv$. 
		
		\item \label{rfrev} $\rho(\underline{u}) = \underline{\rho(u)}$ and the cancellations made to obtain $\rho(\underline{u})$ from $\underline{u}$ are the reverse of those made to obtain $\rho(u)$ from $u$. 

		\item \label{necToCpc} If $\rho(u) = v_1 v_2$ there exist words $u_1, u_2$ such that $u = u_1 u_2$ and $\rho(u_1) = v_1$, $\rho(u_2) = v_2$.

		\item \label{obviRho} $\rho(uv) = \rho(\rho(u) \rho(v))$.

	\end{enumerate} 
\end{proposition}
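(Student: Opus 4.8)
The plan is to regard the eleven items as a list of elementary lemmas, to prove the self‑contained ones first, and to derive the rest from them together with the facts already recorded in the text that $\rho$ and $\hat{\rho}$ are well defined (uniqueness of the reduced form, and uniqueness of the decomposition $\rho(w)=tct^{-1}$). The two workhorses are induction on word length — equivalently, on the number of elementary cancellations $yy^{-1}\to 1$ used to reduce a word — and confluence of free reduction: a word may be freely reduced in any order and one always reaches the same reduced word.

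For the combinatorial core: (1) follows by induction on $n$ from the case $n=2$, where one just writes out the letters of $uv$ and reads them backwards; (2) is then immediate, since writing $v=w_1w_2$ and $u=w_2w_1$ gives $\underline{u}=\underline{w_1}\,\underline{w_2}$ and $\underline{v}=\underline{w_2}\,\underline{w_1}$, which are cyclic permutations of each other. For (11) one reduces $uv$ by first exhausting all cancellations internal to the $u$‑block and to the $v$‑block, reaching $\rho(u)\rho(v)$, and then continues; confluence identifies the outcome with both $\rho(uv)$ and $\rho(\rho(u)\rho(v))$. For (9) one inducts on the number of cancellations needed to reduce $u$: if $u$ is reduced then so is $\underline{u}$, because the condition $y_{i+1}\neq y_i^{-1}$ is symmetric under reversal, so there is nothing to cancel; otherwise pick a cancelling pair $yy^{-1}$ at positions $(k,k{+}1)$ of $u$, delete it to get $u'$ with $\rho(u')=\rho(u)$ and fewer cancellations, note that the same pair appears reversed as $y^{-1}y$ at positions $(|u|{-}k,\,|u|{-}k{+}1)$ of $\underline{u}$, delete it to get $\underline{u'}$, and invoke the induction hypothesis; the bijection between cancellation steps for $u$ and for $\underline{u}$ so obtained is the asserted reversal. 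For (10) I would again induct on $|u|$: if $u$ is reduced, take $u_1=v_1$, $u_2=v_2$; otherwise delete a cancelling pair to get $u'$, apply the hypothesis to get $u'=u_1'u_2'$ with $\rho(u_1')=v_1$, $\rho(u_2')=v_2$, then re‑insert the pair, placing it inside $u_1'$ or inside $u_2'$ according to where it lands, and at the end of $u_1'$ if it lands exactly at the cut — using that inserting $yy^{-1}$ changes neither reduced form.

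The remaining items are then formal. Item (3) is obtained by applying (10) to $\rho(v)=v_1v_2$ with $u=v_2v_1$, setting $v':=u_2u_1$ and using (11) to compute $\rho(v')=\rho(\rho(u_2)\rho(u_1))=\rho(v_2v_1)=\rho(u)$. Item (4) is $u*v=\hat{\rho}(uv)=\hat{\rho}(\rho(uv))=\hat{\rho}(\rho(\rho(u)\rho(v)))=\hat{\rho}(\rho(u)\rho(v))=\rho(u)*\rho(v)$ using (11) and $\hat{\rho}(w)=\hat{\rho}(\rho(w))$, and the ``more general'' clause is the same computation with $\rho(u_1)=\rho(u)$, $\rho(v_1)=\rho(v)$. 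Item (5) holds because $\hat{\rho}(w)=1$ exactly when $\rho(w)=1$ (a reduced word of the form $tt^{-1}$ is empty), and $\rho(uv)=\rho(\rho(u)\rho(v))=1$ exactly when $\rho(v)=\rho(u)^{-1}=\rho(u^{-1})$, i.e.\ $\rho(u)=\rho(v^{-1})$, by uniqueness of reduced forms. Item (6) takes $\alpha=t^{-1}$ with $\rho(u)=t\hat{\rho}(u)t^{-1}$ and computes $\rho(\alpha u\alpha^{-1})=\rho(t^{-1}\rho(u)t)=\rho(t^{-1}(t\hat{\rho}(u)t^{-1})t)=\hat{\rho}(u)$. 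Item (7) writes $v=w_1w_2$, $u=w_2w_1$, and observes $\rho(u)=\rho(w_1^{-1}w_1w_2w_1)=\rho(w_1^{-1}vw_1)$. Finally (8): for every word $w$ one has $\underline{\hat{\rho}(w)}=\hat{\rho}(\underline{w})$ — from $\rho(w)=tct^{-1}$ with $c=\hat{\rho}(w)$ cyclically reduced, (1) and (9) give $\rho(\underline{w})=\underline{\rho(w)}=(\underline{t})^{-1}\,\underline{c}\,\underline{t}$ with $\underline{c}$ cyclically reduced, and uniqueness of the $tct^{-1}$ decomposition yields $\hat{\rho}(\underline{w})=\underline{c}$; applying this with $w=uv$ and using (1) gives $\underline{u*v}=\hat{\rho}(\underline{v}\,\underline{u})=\underline{v}*\underline{u}$, and the reversal of the interface cancellations comes from (9) while the reversal of the ``wrap‑around'' cancellations passing from $\rho$ to $\hat{\rho}$ is again immediate from the symmetry of the cancellation condition.

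The part needing most care is the bookkeeping in (8), (9) and (10): to even state ``the cancellations are the reverse of those made $\dots$'' one should fix, for each reduction of a word $w$, the induced non‑crossing pairing of the cancelled letter‑occurrences of $w$ (the surviving occurrences being exactly the letters of $\rho(w)$), and then verify that the position‑reversal map sends this pairing for $w$ to the one for $\underline{w}$ and surviving occurrences to surviving occurrences; once that formalism is in place the inductions above run mechanically, and everything else is routine.
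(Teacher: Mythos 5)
Your proposal is correct, but it follows a genuinely different route from the paper: here Proposition \ref{summar} is not proved at all in situ --- each item is discharged by a citation to the corresponding Remark or Proposition of \cite{FirstArticle} --- whereas you rebuild every item from first principles. Your organisation is sound: you prove the combinatorial items (\ref{reverse2}), (\ref{reversesim}), (\ref{rfrev}), (\ref{necToCpc}), (\ref{obviRho}) by induction on the number of elementary cancellations together with confluence of free reduction, and then derive (\ref{cpCanc})--(\ref{revCRP}) formally from these plus the uniqueness of $\rho$ and of the decomposition $\rho(w)=t\hat{\rho}(w)t^{-1}$; note in particular that your derivation of (\ref{cpCanc}) from (\ref{necToCpc}) and (\ref{obviRho}) is exactly the argument the paper itself uses later in Remark \ref{rfcp-rf}, and your appeal to confluence is legitimate because, free reduction being length-decreasing, confluence is equivalent to the uniqueness of reduced forms that the paper's definition of $\rho$ already presupposes. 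What the paper's approach buys is brevity and consistency with the conventions of \cite{FirstArticle}, at the price of the reader having to consult that source; what your approach buys is self-containedness, at the price of the bookkeeping you yourself flag for the ``cancellations are reversed'' clauses of (\ref{revCRP}) and (\ref{rfrev}). That bookkeeping is the only point needing care: the pairing of cancelled occurrences is not canonical for an arbitrary word (e.g.\ $xx^{-1}x$ admits two distinct pairings), so the clause must be read, as you do, relative to a chosen reduction sequence --- given any sequence of cancellations reducing $uv$ to $u*v$, the position-reversed sequence reduces $\underline{v}\,\underline{u}$ to $\underline{v}*\underline{u}$ --- which is all that the paper's later arguments (e.g.\ Remark \ref{remlemToMT}) actually use. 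With that reading fixed, your inductions go through and the proposal stands as a complete, more elementary substitute for the citations.
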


\begin{proof} 
	\begin{enumerate} [(1)]
		\item See Remark 1.1 of \cite{FirstArticle}.
		
		\item See Remark 1.5 of \cite{FirstArticle}.
		
		\item See Remark 1.12 of \cite{FirstArticle}.

		\item See Remark 2.14 of \cite{FirstArticle}. 
		
		\item See Remark 2.13 of \cite{FirstArticle}.

		\item See Remark 2.4 of \cite{FirstArticle}. 
		
		\item See Remarks 1.14 of \cite{FirstArticle}.
	
		\item See Remark 2.15 of \cite{FirstArticle}.	
		
		\item See Proposition 1.13 of \cite{FirstArticle}.	

		\item See Remark 1.11 of \cite{FirstArticle}.

		\item See Remark 1.10 of \cite{FirstArticle}.
	\end{enumerate} 
\end{proof}

\begin{remark} \label{rfcp-rf} \rm  Let $v$ be the reduced form of a cyclic permutation of $\rho(u)$; then $v$ is the reduced form of a cyclic permutation of $u$. Indeed there exist words $v_1, v_2$ such that $\rho(u) = v_1 v_2$ and $v = \rho(v_2 v_1)$. By (\ref{necToCpc}) of Proposition \ref{summar} there exist words $u_1, u_2$ such that $u = u_1 u_2$ and $\rho(u_1) = v_1$, $\rho(u_2) = v_2$. Thus $v = \rho(v_2 v_1) = \rho(\rho(u_2) \rho(u_1)) = \rho(u_2 u_1)$, where the last equality follows from (\ref{obviRho}) of Proposition \ref{summar}, and this proves the claim.
 \end{remark}

\begin{remark} \label{u*v=uv} \rm  Let $u, v$ be words; then $u * v = uv$ if and only if $v * u = vu$. Indeed $u * v = uv$ if and only if $uv$ is cyclically reduced, but this is true if and only if $vu$ is cyclically reduced, if and only if $v * u = vu$. \end{remark}

\begin{remark} \label{LeviLemma} \rm We have the following result, known as \textit{Levi's Lemma} (see \cite{ChofKar}, pag. 333 or \cite{Kar}, Theor. 2): \textit{let $u_1, u_2, v_1, v_2$ be words such that $u_1 u_2 = v_1 v_2$; then there exists a word $p$ such that either $u_1 = v_1 p$ and $v_2 = p u_2$ or $v_1 = u_1 p$ and $u_2 = p v_2$.} The two cases can be represented graphically in the following way,
	
\medskip
	
\hspace{2cm} 
\begin{tabular}{|c|c|}
	\hline 
	$\,\, u_1 \,$   & $u_2 \,$ \\
	\hline
\end{tabular}
	\quad  \hspace{1cm} and \hspace{1cm}
\begin{tabular}{|c|c|}
	\hline 
	$u_1$ & $\,\,u_2\,$ \\
	\hline
\end{tabular}
	
\hspace{2cm}
\begin{tabular}{|c|c|}
	\hline 
	$v_1$ & $\,\, v_2 \,\,\,$ \\
	\hline
\end{tabular}
	\quad \hspace{28.8mm}
\begin{tabular}{|c|c|}
	\hline 
	$\,\,v_1 \,\,$   & $v_2$ \\
	\hline
\end{tabular}
	
\medskip
	
\noindent and correspond to putting the bar separating $v_1$ and $v_2$ either inside $u_1$ or inside $u_2$. The case when this bar is exactly below that separating $u_1$ and $u_2$, i.e., when $u_1 = v_1$ and $u_2 = v_2$, can be considered a special case of both the cases. 
	
In general let us consider the word equation $u_1 \dots u_m = v_1 \dots v_n$, possibly with $m \neq n$. Any solution to this equation determines uniquely a way of putting $n - 1$ bars inside the $m$ spaces corresponding to $u_1, \dots, u_m$ and also a way of putting $m - 1$ bars inside the $n$ spaces corresponding to $v_1, \dots, v_n$. This is true even if some of the $u_i$ or $v_j$ are the empty word. Indeed if $u_i = 1$ or $v_j = 1$ then no bar must be contained in $u_i$ or $v_j$. 
	
We observe that a solution to the equation $u_1 \dots u_m = v_1 \dots v_n$ determines also a \textit{weak composition}\footnote{a weak composition for an integer is a composition when 0's are allowed} for $n - 1$ in $m$ parts and for $m - 1$ in $n$ parts.
	
We give the following as an example for $m = 4$ and $n = 3$:
	
\medskip

\hspace{3.5 cm}
\begin{tabular}{|c|c|c|c|}
	\hline 
	$u_1$ & $u_2$ &	$u_3$ & $\,\, u_4 \,\,$ \\
	\hline
\end{tabular}
	
\hspace{3.5 cm}
\begin{tabular}{|c|c|c|}
	\hline 
	$\,\, v_1 \,\,$ & $\,\,\,\,\,\,\, v_2 \,\,\,\,\,\,\,$ & $v_3$\\
	\hline
\end{tabular}

\medskip
	
\noindent Here we can say that there exist words $a, b, c$ such that $v_1 = u_1 a$, $u_2 = a b$, $v_2 = b u_3 c$ and $u_4 = c v_3$. This solution determines the weak compositions $(0, 1, 0, 1)$ for 2 and $(1, 2, 0)$ for 3, which are obtained by counting the number of bars inside each $u_i$ and each $v_j$ respectively.
\end{remark}

\begin{lemma} \label{shirv} Let $u$ and $v$ be reduced words such that $u \neq v^{-1}$. Then one of the following holds:
	\begin{enumerate} [1)]
		\item there exist words $u_1, a, s$ such that $u = u_1 a$, $v = a^{-1} s (u*v) s^{-1} u_1^{-1}$ and $\rho(uv) = u_1 s (u*v) s^{-1} u_1^{-1}$;
		
		\item there exist non-empty words $c_1, c_2$ and words $t, a$ such that $u*v = c_1 c_2$, $u = t c_1 a$, $v = a^{-1} c_2 t^{-1}$, $\rho(uv) = t c_1 c_2 t^{-1}$, $\rho(v u) = a^{-1} c_2 c_1 a$ and $v*u = c_2 c_1$; 
		
		\item there exist words $v_1, s, a$ such that $u = v_1^{-1} s (u*v) s^{-1} a$, $v = a^{-1} v_1$ and $\rho(uv) = v_1^{-1} s (u*v) s^{-1} v_1$. 
	\end{enumerate}
\end{lemma}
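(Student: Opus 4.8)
The plan is to follow the two successive cancellations that turn the concatenation $uv$ first into the reduced word $\rho(uv)$ and then into the cyclically reduced word $u*v$, and to read off which of the three alternatives holds from the position of the seam of $\rho(uv)$ (the one inherited from the junction of $u$ and $v$) relative to the cyclically reduced core $u*v$.

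First I would record the interior cancellation. Since $u$ and $v$ are reduced, there is a longest suffix $a$ of $u$ such that $\underline{a}^{-1}$ is a prefix of $v$; writing $u = u_1 a$ and $v = a^{-1} v_1$, the block $a a^{-1}$ accounts for all the cancellation in $uv$, so $\rho(uv) = u_1 v_1$, and by maximality of $a$ this concatenation is reduced. Moreover $u_1$ and $v_1$ are not both empty, since that would force $v = u^{-1}$. Next, by the uniqueness of the cyclically reduced form, I write $\rho(uv) = t\,(u*v)\,t^{-1}$ with $u*v$ cyclically reduced and $t$ a prefix of $\rho(uv)$, so that $t^{-1} = \underline{t}^{-1}$ is a suffix; since $u \ne v^{-1}$, part (\ref{jarem}) of Proposition \ref{summar} gives $u*v \ne 1$, whence $|\rho(uv)| = 2|t| + |u*v|$ with $|u*v| \ge 1$.

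Now I would distinguish three cases according to where the position $|u_1|$ (the boundary between $u_1$ and $v_1$ in $\rho(uv) = u_1 v_1$) lies relative to the central factor $u*v$, which occupies positions $|t|$ through $|t|+|u*v|$. If $|u_1| \le |t|$, then $u_1$ is a prefix of $t$; setting $t = u_1 s$ gives $v_1 = s\,(u*v)\,s^{-1}u_1^{-1}$ and $\rho(uv) = u_1 s\,(u*v)\,s^{-1}u_1^{-1}$, which together with $u = u_1 a$ and $v = a^{-1} v_1$ is the first alternative. If $|u_1| \ge |t| + |u*v|$, then $|v_1| \le |t|$, so $v_1$ is a suffix of $t^{-1}$; setting $t = v_1^{-1} s$ gives $u_1 = v_1^{-1} s\,(u*v)\,s^{-1}$, hence the third alternative. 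In the remaining case $|t| < |u_1| < |t| + |u*v|$, the word $t$ is a proper prefix of $u_1$ and $t^{-1}$ a proper suffix of $v_1$, so I can write $u_1 = t c_1$ and $v_1 = c_2 t^{-1}$ with $c_1, c_2$ non-empty and $u*v = c_1 c_2$; this gives $u = t c_1 a$, $v = a^{-1} c_2 t^{-1}$, $\rho(uv) = t c_1 c_2 t^{-1}$. It then remains to handle $vu$: cancelling $t^{-1} t$ in $vu = a^{-1} c_2 t^{-1} \cdot t c_1 a$ leaves $a^{-1} c_2 c_1 a$, which I would argue is reduced --- $c_2 c_1$ is a cyclic permutation of the cyclically reduced word $c_1 c_2 = u*v$, hence reduced, and the junctions with $a^{\pm 1}$ cannot cancel because $u = t c_1 a$ and $v = a^{-1} c_2 t^{-1}$ are reduced --- so $\rho(vu) = a^{-1} c_2 c_1 a$, and since $c_2 c_1 \sim u*v$ is cyclically reduced, $v*u = \hat{\rho}(vu) = c_2 c_1$.

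The step I expect to require the most care is the first cancellation --- that $\rho(uv)$ is obtained from $uv$ by the single block $a a^{-1}$ and that $u_1 v_1$ is then genuinely reduced, which is exactly where the maximality of $a$ is used --- together with the reducedness verification for $a^{-1} c_2 c_1 a$ in the middle case. The degenerate configurations ($u_1 = 1$, $v_1 = 1$, $s = 1$, or $a = 1$) need no separate discussion: each is covered by whichever of the three length inequalities holds, with the relevant words taken to be empty.
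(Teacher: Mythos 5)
Your proof is correct: the maximal-cancellation decomposition $u=u_1a$, $v=a^{-1}v_1$ with $\rho(uv)=u_1v_1$ reduced, followed by locating the seam $|u_1|$ relative to the factorization $\rho(uv)=t\,(u*v)\,t^{-1}$, yields exactly the three alternatives, and your verification that $a^{-1}c_2c_1a$ is reduced (hence $\rho(vu)=a^{-1}c_2c_1a$ and $v*u=c_2c_1$) is sound. The paper gives no proof here, citing Lemma B.2 of \cite{FirstArticle}, and your argument is the natural one that the case structure of the statement reflects; the only blemish is the notational slip ``$\underline{a}^{-1}$ is a prefix of $v$'' where you mean $a^{-1}$, as your subsequent equations correctly use.
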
 

\begin{proof} See Lemma B.2 of \cite{FirstArticle}.
\end{proof}

\begin{lemma} \label{shirv4} Let $u$ and $v$ be reduced words such that $u \neq v^{-1}$ and let $d$ be a cyclic permutation of $u*v$. Then there exists a pair of words $p$ and $q$ such that one of them is a cyclic permutation of $u$ and the other of $v$ and such that one of the following two cases holds: \begin{enumerate} [(a)]
		\item $q = p^{-1} r c_1 c_2 r^{-1}$ for words $r, c_1, c_2$ such that $p*q = c_1 c_2$ and $d = c_2 c_1$; moreover $p*q = u*v$; finally if $d = u*v$ then $c_2 = 1$;
		
		\item $p = e_2 b$ and $q = b^{-1} e_3 e_1$ for words $b, e_1, e_2, e_3$ such that $d = e_1 e_2 e_3$ and $e_2, e_3 e_1 \neq 1$; moreover either $p*q = u*v$ or $q*p = u*v$; finally if $d = u*v$ then $e_1 = 1$.		
	\end{enumerate} 
\end{lemma}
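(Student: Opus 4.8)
Write $w := u*v$; since $u \neq v^{-1}$ we have $w \neq 1$, and $w$ is cyclically reduced. As $d$ is a cyclic permutation of $w$, fix once and for all a factorization $w = w_1 w_2$ with $d = w_2 w_1$, agreeing to take $w_1 := w$ and $w_2 := 1$ in the case $d = w$. The plan is to apply Lemma \ref{shirv} to $u$ and $v$ and to settle its three alternatives one by one, using Levi's Lemma (Remark \ref{LeviLemma}) in the main alternative to locate where inside $w$ the cut of $d$ lies. All products below are computed in $\mathcal{F}(X)$, so cancellations need not be followed.

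Alternatives 1) and 3) of Lemma \ref{shirv} both land in case (a). If $u = u_1 a$ and $v = a^{-1} s w s^{-1} u_1^{-1}$, let $p$ be the cyclic permutation $a u_1$ of $u$ and $q$ the cyclic permutation $u_1^{-1} a^{-1} s w s^{-1}$ of $v$; if instead $u = v_1^{-1} s w s^{-1} a$ and $v = a^{-1} v_1$, let $p := v_1 a^{-1} \sim v$ and $q := a v_1^{-1} s w s^{-1} \sim u$. In either case, with $r := s$, $c_1 := w_1$ and $c_2 := w_2$ one reads off $q = p^{-1} r c_1 c_2 r^{-1}$ immediately; moreover $d = w_2 w_1 = c_2 c_1$, and $pq$ represents a conjugate of $w$ in $\mathcal{F}(X)$, so $p*q = w = u*v = c_1 c_2$; and if $d = w$ our convention gives $c_2 = w_2 = 1$. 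Hence (a) holds.

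Alternative 2) of Lemma \ref{shirv} is the heart of the matter and lands in case (b); write $C_1, C_2$ for the words it calls $c_1, c_2$, so $w = C_1 C_2$ with $C_1, C_2 \neq 1$, $u = t C_1 a$, $v = a^{-1} C_2 t^{-1}$ and $v*u = C_2 C_1$. Applying Levi's Lemma to $w_1 w_2 = C_1 C_2$ yields a word $\ell$ with either $w_1 = C_1 \ell$ and $C_2 = \ell w_2$, so $d = w_2 C_1 \ell$, or $C_1 = w_1 \ell$ and $w_2 = \ell C_2$, so $d = \ell C_2 w_1$; the boundary case, the cut of $d$ falling exactly between $C_1$ and $C_2$, belongs to the first with $\ell = 1$. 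In the first sub-case I take $p := C_1 a t \sim u$, $q := t^{-1} a^{-1} \ell w_2 \sim v$, $b := at$, $(e_1, e_2, e_3) := (w_2, C_1, \ell)$; in the second, $p := C_2 t^{-1} a^{-1} \sim v$, $q := a t w_1 \ell \sim u$, $b := t^{-1} a^{-1}$, $(e_1, e_2, e_3) := (\ell, C_2, w_1)$. In both, the factorizations $p = e_2 b$, $q = b^{-1} e_3 e_1$ and $d = e_1 e_2 e_3$ are read off at once, and $e_2$ and $e_3 e_1$ equal $C_1$ and $C_2$ in one order or the other, hence are non-empty. One then checks in $\mathcal{F}(X)$ that $pq$ reduces to $w$ in the first sub-case (the block $(at)(at)^{-1}$ cancelling), and that $qp = (at)\, w\, (at)^{-1}$, a conjugate of $w$, in the second; hence $p*q = u*v$ in the first sub-case and $q*p = u*v$ in the second. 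Finally, if $d = w$ then $w_2 = 1$, which forces the first sub-case and yields $e_1 = w_2 = 1$. This proves (b) and exhausts the alternatives.

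The only delicate points I foresee are the mechanical checks that the words displayed above are genuinely cyclic permutations of $u$ and of $v$ — this unwinds the factorizations that Lemma \ref{shirv} and Levi's Lemma supply — and the treatment of the edge case $d = u*v$, for which it is essential that $w = w_1 w_2$ have been pinned down with $w_1 = w$ from the outset. Everything else reduces to identities in the free group, and the cancellations themselves need not be recorded.
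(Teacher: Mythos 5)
Your skeleton (feed $u,v$ to Lemma \ref{shirv}, then locate the cut of $d$ inside $u*v$ with Levi's Lemma) is reasonable, and case (a) and sub-case (i) of (b) can be made to work, but the step that closes sub-case (ii) of (b) is a genuine gap. There you argue that $qp=(at)\,w\,(at)^{-1}$ with $w:=u*v$ is a conjugate of $w$ and conclude $q*p=u*v$. Conjugacy only gives that $q*p$ is a \emph{cyclic permutation} of $w$ (Corollary \ref{permCycRedForm}); equality with $w$ requires in addition that $\rho(at)\,w\,\rho(at)^{-1}$ be reduced, and nothing in Lemma \ref{shirv} rules out cancellation between $a$ and $t$. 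Concretely, take $u=xyzy^{-1}x^{-1}$ and $v=xyzx^{-1}$; then $u*v=yzz$, alternatives 1) and 3) of Lemma \ref{shirv} are impossible, and the only decomposition in alternative 2) is $t=x$, $C_1=yz$, $C_2=z$, $a=y^{-1}x^{-1}$. For the cyclic permutation $d=zzy$ (so $w_1=y$, $w_2=zz$, your second sub-case with $\ell=z$) your recipe gives $p=zx^{-1}xy$, $q=y^{-1}x^{-1}xyz$, and a direct computation yields $p*q=zyz=v*u$ and $q*p=zzy=d$; neither equals $u*v=yzz$, so the clause ``either $p*q=u*v$ or $q*p=u*v$'' of (b) fails for your choice of data.

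The same principle (``$pq$ is a conjugate of $w$, hence $p*q=w$'') is what you invoke in case (a); there the conclusion is salvageable because $\rho(pq)$ is literally $s\,(u*v)\,s^{-1}$, a subword of the reduced word $v$ (respectively $u$), so $\hat{\rho}(pq)=u*v$ — but that is exactly the cancellation bookkeeping you announce you will not follow, and in sub-case (ii) skipping it is fatal, since there the conjugator $\rho(at)$ can eat into $w$ and rotate it. Note that the paper does not reprove this statement (it cites Lemma B.4 of \cite{FirstArticle}), so the issue is the correctness of your argument rather than a divergence of routes; to repair sub-case (ii) you need either a different choice of the pair $p,q$ and of $b,e_1,e_2,e_3$, or explicit control of the interaction between $a$ and $t$.
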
  

\begin{proof} See Lemma B.4 of \cite{FirstArticle}.
\end{proof}

\begin{proposition} \label{puzo} Let $u$ and $v$ be words; then $u*v$ is a cyclic permutation of $v*u$. 
	
Moreover if $u$ and $v$ are reduced and if there exist words $\alpha$, $\beta$, $u'$, $v'$ such that $u = \alpha u' \beta$ and $v = \beta^{-1} v' \alpha^{-1}$ then the words $\beta \beta^{-1}$ and $\alpha^{-1} \alpha$ are canceled when obtaining $u*v$ from $uv$ and when obtaining $v*u$ from $vu$.
\end{proposition}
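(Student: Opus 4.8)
The plan is to split into two parts, matching the statement. For the first part, that $u * v \sim v * u$, I would reduce to the case where $u$ and $v$ are already reduced: by (\ref{u*vNonRed}) of Proposition \ref{summar} we have $u * v = \rho(u) * \rho(v)$ and $v * u = \rho(v) * \rho(u)$, so it suffices to prove the claim for $\rho(u), \rho(v)$ in place of $u, v$. If $\rho(u) = \rho(v)^{-1}$ then both products are the empty word by (\ref{jarem}), so I may also assume $\rho(u) \neq \rho(v)^{-1}$ and apply Lemma \ref{shirv} to the reduced words $\rho(u)$ and $\rho(v)$. In each of the three cases of that lemma I would read off directly that $v * u = u * v$ up to a cyclic swap: in case 2) we are told outright $u * v = c_1 c_2$ and $v * u = c_2 c_1$, which is a cyclic permutation; in cases 1) and 3) the conjugating word is nonempty in general, but there $\rho(uv)$ and $\rho(vu)$ are conjugate by the same element (respectively $u_1$ versus $a^{-1}$, or $v_1^{-1}$ versus $a^{-1}$), and since $\hat\rho$ is a conjugacy invariant we get $u * v = \hat\rho(uv) = \hat\rho(vu) = v * u$ exactly, which is trivially $\sim$. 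So the first part follows from a short case analysis on Lemma \ref{shirv}.

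For the second part I assume $u, v$ are reduced with $u = \alpha u' \beta$ and $v = \beta^{-1} v' \alpha^{-1}$. I want to track the cancellations. The key observation is that in $uv = \alpha u' \beta \beta^{-1} v' \alpha^{-1}$, the block $\beta \beta^{-1}$ sitting in the middle is a candidate for cancellation, and the block $\alpha^{-1} \alpha$ appears cyclically at the seam between the end of $uv$ and its beginning, which is exactly the location where the ``cyclic'' part of the reduction in $*$ operates. I would argue as follows: since $u = \alpha u' \beta$ is reduced, the suffix $\beta$ is reduced, and since $v = \beta^{-1} v' \alpha^{-1}$ is reduced, the prefix $\beta^{-1}$ is reduced, so these are the letters that face each other at the concatenation point. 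The reduction of $uv$ to $\rho(uv)$ cancels, letter by letter from the inside out, the matching pairs; the first $|\beta|$ such pairs are precisely the letters of $\beta$ against the letters of $\beta^{-1}$, so the whole of $\beta\beta^{-1}$ gets canceled (whatever further cancellation happens afterwards between $u'$ and $v'$ is irrelevant to this claim). Symmetrically, passing to $\hat\rho(uv)$, the cyclic reduction cancels matching pairs straddling the two ends of $\rho(uv)$; the outermost letters there are the last letters of $\alpha^{-1}$ — i.e. the reversal of $\alpha$ — against the first letters of $\alpha$, so the block $\alpha^{-1}\alpha$ (read cyclically) gets canceled in forming $u * v$. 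The same two arguments apply verbatim to $vu = \beta^{-1} v' \alpha^{-1} \alpha u' \beta$: here $\alpha^{-1}\alpha$ is an interior block that cancels in forming $\rho(vu)$, and $\beta\beta^{-1}$ is the cyclic block at the ends that cancels in forming $v * u$.

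To make ``the cancellations canceled are exactly these'' precise I would invoke the structural description already available in the excerpt: apply Lemma \ref{shirv} to $u$ and $v$ (they are reduced and, if $u = v^{-1}$, then $\alpha u' \beta = \alpha \underline{v'}^{-1}\cdots$ forces a degenerate situation that can be handled directly). In each case of that lemma the factorizations it produces, e.g. $u = u_1 a$ and $v = a^{-1} s(u*v)s^{-1} u_1^{-1}$, must be compared with the given $u = \alpha u' \beta$, $v = \beta^{-1} v' \alpha^{-1}$ via Levi's Lemma (Remark \ref{LeviLemma}); this pins down that $a$ contains $\beta$ (the inner cancellation absorbs $\beta\beta^{-1}$) and $u_1$ ends with $\alpha$ (the outer/cyclic cancellation absorbs $\alpha^{-1}\alpha$). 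I would then read off the analogous statement for $vu$ using Remark \ref{u*v=uv} and the symmetry of the hypothesis under swapping $(u,\alpha,u',\beta) \leftrightarrow (v,\beta^{-1},v',\alpha^{-1})$.

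The main obstacle I anticipate is the bookkeeping in the last paragraph: ``being canceled'' is a statement about the reduction process, not just about the final words, so one has to be careful that the $\beta\beta^{-1}$ and $\alpha^{-1}\alpha$ singled out by the hypothesis really are the same letter-occurrences that Lemma \ref{shirv}'s canonical factorization cancels, and not some overlapping or shifted copies. Levi's Lemma gives the needed rigidity — matching two factorizations of the same word forces the interface words to line up — but one must check it in each of the three branches of Lemma \ref{shirv} (and their mirror images for $vu$), and handle the boundary cases where $\alpha$, $\beta$, $u'$ or $v'$ is empty, where several branches collapse. None of this is deep, but it is the part where an error could hide, so I would write it out branch by branch rather than appealing to symmetry too quickly.
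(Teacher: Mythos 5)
Your argument for the first part breaks at its key step. The paper itself gives no proof here (it cites Theorem 4.1 of \cite{FirstArticle}), so a self-contained argument would be welcome, but in cases 1) and 3) of Lemma \ref{shirv} you conclude that $u*v=\hat{\rho}(uv)=\hat{\rho}(vu)=v*u$ ``exactly'' because ``$\hat{\rho}$ is a conjugacy invariant''. That is false as an equality: take $u=zx$ and $v=x^{-1}yx^{-1}z^{-1}$ (both reduced). This is case 1) with $u_1=z$, $a=x$, $s=1$, and $u*v=yx^{-1}$, while $\rho(vu)=x^{-1}y$ is already cyclically reduced, so $v*u=x^{-1}y\neq u*v$ (and the example does not fit case 2), so it cannot be rerouted there). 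The cyclically reduced form is a conjugacy invariant only \emph{up to cyclic permutation}, and that statement is precisely Corollary \ref{permCycRedForm}, which in this series of papers is deduced from Proposition \ref{puzo} itself; invoking it is therefore circular in the present logical order, and if it were available the whole first part would follow in one line from the fact that $vu$ is conjugate to $uv$, making your case analysis on Lemma \ref{shirv} superfluous. To repair cases 1) and 3) you must actually compute $\rho(vu)$ there and exhibit the cyclic permutation relating its cyclically reduced form to $u*v$; that computation is the real content of the first claim and is missing.

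The second part is also not established. Your mechanism ``the outermost letters are the last letters of $\alpha^{-1}$ against the first letters of $\alpha$'' tacitly assumes that, after $\beta\beta^{-1}$ cancels, the remaining linear cancellation stays inside $u'v'$; it need not. With $\alpha=x$, $u'=y$, $\beta=z$, $v'=y^{-1}x^{-1}w$ one has $u=xyz$, $v=z^{-1}y^{-1}x^{-1}wx^{-1}$, and $\rho(uv)=wx^{-1}$ is already cyclically reduced: the leading $\alpha$ is consumed against a letter of $v'$ during the linear reduction, and no cyclic cancellation pairing $\alpha^{-1}$ with $\alpha$ ever occurs. So the pairing you describe is not what the reduction does, and whether the stated conclusion still holds depends on the precise convention for ``canceled'' fixed in \cite{FirstArticle}, which your sketch does not engage with. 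The Levi's-Lemma bookkeeping (Remark \ref{LeviLemma}) that you correctly identify as the delicate point is exactly the part you leave unexecuted, branch by branch, including the degenerate subcases. As it stands the proposal proves neither half of the proposition.
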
 

\begin{proof} See Theorem 4.1 of \cite{FirstArticle}.
\end{proof}

\begin{corollary} \label{permCycRedForm} If $t, w$ are words then $\hat{\rho}(t w t^{-1})$ is a cyclic permutation of $\hat{\rho}(w)$. If moreover $\rho(t)\rho(w)\rho(t)^{-1}$ is reduced then $\hat{\rho}(t w t^{-1}) =\hat{\rho}(w)$. \end{corollary}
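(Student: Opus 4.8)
The plan is to obtain both statements from Proposition~\ref{puzo} applied to the two words $u:=t$ and $v:=wt^{-1}$.

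For the first assertion, I would first note that, viewed as a factorization in $\mathcal{M}(X\cup X^{-1})$, one has $uv=twt^{-1}$, so by the definition of $*$ we get $u*v=\hat{\rho}(uv)=\hat{\rho}(twt^{-1})$. On the other hand $vu=wt^{-1}t$; the words $wt^{-1}t$ and $w$ have the same reduced form (the factor $t^{-1}t$ cancels, which is a routine consequence of parts (\ref{u*vNonRed}) and (\ref{obviRho}) of Proposition~\ref{summar}), and $\hat{\rho}$ depends only on the reduced form of its argument, so $v*u=\hat{\rho}(vu)=\hat{\rho}(w)$. Proposition~\ref{puzo} now gives $u*v\sim v*u$, i.e. $\hat{\rho}(twt^{-1})\sim\hat{\rho}(w)$, as required.

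For the second assertion, assume $\rho(t)\rho(w)\rho(t)^{-1}$ is reduced. Using (\ref{obviRho}) of Proposition~\ref{summar} together with $\rho(t^{-1})=\rho(t)^{-1}$ one gets $\rho(twt^{-1})=\rho(t)\rho(w)\rho(t)^{-1}$, and in particular $\rho(wt^{-1})=\rho(w)\rho(t)^{-1}$ is reduced. Hence, after harmlessly replacing $u$ and $v$ by their reduced forms (legitimate by (\ref{u*vNonRed}) of Proposition~\ref{summar}), we may take $u=\rho(t)$ and $v=\rho(w)\rho(t)^{-1}$, which are reduced words fitting the hypothesis of the ``moreover'' clause of Proposition~\ref{puzo} with $\alpha=\rho(t)$, $\beta=1$, $u'=1$, $v'=\rho(w)$. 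That clause tells us precisely that, in passing from $uv=\rho(t)\rho(w)\rho(t)^{-1}$ to $u*v=\hat{\rho}(uv)$, the prefix $\rho(t)$ is cancelled cyclically against the suffix $\rho(t)^{-1}$; since $uv$ is already reduced, what remains after these cancellations is exactly $\rho(w)$, and the remaining cyclic reductions turn it into $\hat{\rho}(\rho(w))=\hat{\rho}(w)$. Therefore $\hat{\rho}(twt^{-1})=u*v=\hat{\rho}(w)$.

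The only delicate point — and the step I expect to be the main, though modest, obstacle — is justifying that the cyclic reduction of $\rho(t)\rho(w)\rho(t)^{-1}$ really does proceed by peeling $\rho(t)$ off against $\rho(t)^{-1}$, with no letter of $\rho(w)$ interfering; equivalently, that the $\sim$ of the first part can be upgraded to an equality once $\rho(t)\rho(w)\rho(t)^{-1}$ is reduced. This is exactly what the ``moreover'' clause of Proposition~\ref{puzo} supplies; alternatively one can check it directly by inspecting the cyclic cancellations in the reduced word $\rho(t)\rho(w)\rho(t)^{-1}$, writing $\rho(t)=x_1\cdots x_m$ and $\rho(w)=y_1\cdots y_n$ and observing that at every stage the word retains the shape $x_{k+1}\cdots x_m\,y_1\cdots y_n\,x_m^{-1}\cdots x_{k+1}^{-1}$, so that no premature interaction between the $x_i$'s and the $y_j$'s is possible.
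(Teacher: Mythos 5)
Your argument is correct: the first claim follows from Proposition \ref{puzo} applied to $u=t$, $v=wt^{-1}$ exactly as you describe, and for the second claim your check that, once $\rho(t)\rho(w)\rho(t)^{-1}$ is reduced, the cyclic cancellations are forced to peel $\rho(t)$ against $\rho(t)^{-1}$ and then reduce $\rho(w)$ in place to $\hat{\rho}(w)$ closes the only delicate point. The paper itself gives no proof but cites Corollary 4.4 of \cite{FirstArticle}, which is stated there as a consequence of Proposition \ref{puzo} (Theorem 4.1 of that paper), so your derivation follows essentially the same route; note only that the second assertion also drops out in one line from the uniqueness of the decomposition $\rho(w)=s\,\hat{\rho}(w)\,s^{-1}$, since then $\rho(t)\rho(w)\rho(t)^{-1}=(\rho(t)s)\,\hat{\rho}(w)\,(\rho(t)s)^{-1}$ with $\rho(t)s$ reduced.
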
 

\begin{proof} See Corollary 4.4 of \cite{FirstArticle}. \end{proof}

\begin{proposition} \label{corToMTh1} Let $u$ and $w$ be words and let us set $f := u^{-1} * w$ and $g := w * u^{-1}$. Then there exist words $u', u'', h$ such that $u'$ and $u''$ are the reduced forms of cyclic permutations of $u$ and such that
	$$\hat{\rho}(w) \sim u' * (h f h^{-1}), \hspace{0.5 cm} \hat{\rho}(w) \sim (h g h^{-1}) * u''.$$
Moreover if $h \neq 1$ then $f = g$, $u' = u''$, $u'*(h f h^{-1}) = u' h f h^{-1}$ and $(h g h^{-1}) * u'' = h g h^{-1} u''$. Finally the identities among relations involving $u, w, u^{-1}, w^{-1}$ that by Remark \ref{idFrom} follow from these equivalences are strictly basic.
\end{proposition}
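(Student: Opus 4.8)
The plan is to reduce to the genuinely non-trivial configuration and then read both equivalences off Lemma~\ref{shirv} applied to the pair $\bigl(u^{-1},w\bigr)$, getting the second equivalence from the first by Proposition~\ref{puzo} rather than by a separate reversal argument. First I would normalise. By (\ref{u*vNonRed}) of Proposition~\ref{summar} we have $f=\rho(u)^{-1}*\rho(w)$, $g=\rho(w)*\rho(u)^{-1}$ and $\hat\rho(w)=\hat\rho(\rho(w))$, while Remark~\ref{rfcp-rf} turns a cyclic permutation of $\rho(u)$ back into one of $u$; so it suffices to prove the statement for reduced $u,w$. If $u=1$, or $w=1$, or $u=w$ --- the last being equivalent, by (\ref{jarem}) of Proposition~\ref{summar}, to $f=1$ and to $g=1$ --- I take $h=1$ and $u'=u''=\hat\rho(u)$ (the reduced form of a cyclic permutation of $u$), and check in each of the three sub-cases, using (\ref{jarem}) of Proposition~\ref{summar}, that $\hat\rho(u)*f=\hat\rho(w)=g*\hat\rho(u)$. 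From now on $u,w$ are reduced, non-trivial and distinct, so $u^{-1}\ne w^{-1}$ and Lemma~\ref{shirv} applies to $\bigl(u^{-1},w\bigr)$, its ``$u*v$'' being $f$ and its ``$v*u$'' being $g$.

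The core is the case distinction of Lemma~\ref{shirv}. In case~1) the lemma gives a factorisation $u^{-1}=x_1a$, $w=a^{-1}\,s\,f\,s^{-1}\,x_1^{-1}$ with $\rho(u^{-1}w)=x_1sfs^{-1}x_1^{-1}$ reduced; then $u=a^{-1}x_1^{-1}$, and $w$ is a cyclic permutation of $x_1^{-1}a^{-1}\,sfs^{-1}$, so with $u'$ the reduced form of that cyclic permutation of $u$ and $h:=s$ one gets
$$\hat\rho(w)\ \sim\ \hat\rho\bigl(x_1^{-1}a^{-1}\,sfs^{-1}\bigr)\ =\ u'*(hfh^{-1}),$$
using that $sfs^{-1}$ is a reduced subword of $\rho(u^{-1}w)$, (\ref{u*vNonRed}) of Proposition~\ref{summar}, and Corollary~\ref{permCycRedForm} together with (\ref{permCon2}) of Proposition~\ref{summar} to carry $\hat\rho$ through the cyclic permutation. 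From the reducedness of $\rho(u^{-1}w)$ and of $u$ one then checks that when $s\ne1$ the word $x_1^{-1}a^{-1}sfs^{-1}$ --- after the cancellation at the join of $x_1^{-1}$ and $a^{-1}$ has been absorbed into $u'$ --- is cyclically reduced, so that $u'*(hfh^{-1})=u'hfh^{-1}$; and that $\rho(wu^{-1})$, after cancelling $x_1^{-1}x_1$ and the longest common prefix of $s$ and $a$, is a reduced conjugate of $f$, whence $g=f$. Case~3) has the dual shape ($w=a^{-1}v_1$ short, $u^{-1}=v_1^{-1}sfs^{-1}a$); applying the same extraction after reversing all words via (\ref{revCRP}) and (\ref{rfrev}) of Proposition~\ref{summar} yields the same conclusion, $g=f$ and $\hat\rho(w)\sim u'*(hfh^{-1})$. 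Case~2) produces non-empty $c_1,c_2$ and words $t,a$ with $f=c_1c_2$, $u^{-1}=tc_1a$, $w=a^{-1}c_2t^{-1}$, $g=c_2c_1$; here $u=a^{-1}c_1^{-1}t^{-1}$, and rotating $u$ and cancelling $c_1^{-1}c_1$ gives both $\hat\rho(w)\sim\rho(t^{-1}a^{-1}c_1^{-1})*f$ and $\hat\rho(w)\sim g*\rho(c_1^{-1}t^{-1}a^{-1})$, so $h=1$ works.

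Assembling the statement: if the analysis forces $h\ne1$ we are in case~1) or~3), hence $f=g$, and then $u'hfh^{-1}=u'hgh^{-1}\sim hgh^{-1}u'$; taking $u'':=u'$, the single $h$ gives $\hat\rho(w)\sim u'*(hfh^{-1})$ and $\hat\rho(w)\sim(hgh^{-1})*u''$, with both right-hand sides the cyclically reduced concatenations claimed. If $h=1$, then in case~2) both equivalences were obtained directly, while in cases~1) and~3) we have $g=f$ and $\hat\rho(w)\sim u'*f=u'*g\sim g*u'$ by Proposition~\ref{puzo}, so again $u'':=u'$ works. Finally, every conjugating word that occurs ($x_1,a,s,t,c_1$, the $v_1$ of case~3), and the rotations of the chosen factorisations) is a prefix or a suffix of one of $u,w,u^{-1},w^{-1}$, so feeding the equivalences into Remark~\ref{idFrom} produces identities among relations of the strictly basic form.

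The step I expect to be the real obstacle is the bookkeeping inside cases~1) and~3): identifying exactly which cyclic permutation of $u$ the word $u'$ is, proving that $u'hfh^{-1}$ is honestly cyclically reduced --- not merely a factorisation --- precisely when $h\ne1$, and establishing $g=f$ in the situations where $s$ and $a$ only partially cancel. Handling these, and making the reversal in case~3) genuinely parallel to case~1), is where the sub-subcases multiply.
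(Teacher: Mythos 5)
Your reduction to reduced $u,w$, the disposal of the degenerate cases, and the case 2) computation are fine, but the heart of the proposition is the ``moreover'' clause, and your treatment of it in case 1) is wrong. You set $h:=s$ and $u':=\rho(x_1^{-1}a^{-1})$ and assert that when $s\neq 1$ the word $x_1^{-1}a^{-1}sfs^{-1}$ is cyclically reduced; it need not be, because the last letter of $\rho(x_1^{-1}a^{-1})$ can come from $x_1^{-1}$ and cancel against the first letter of $s$ (reducedness of $w$ only forbids cancellation between $a^{-1}$ and $s$). Concrete counterexample: over $X=\{y,z,t\}$ take $u=z^{-1}y^{-1}$, $w=y\,t\,y^{-1}z^{-1}y^{-1}$. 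Then $u^{-1}=yz$, and case 1) of Lemma \ref{shirv} holds, forcedly with $u_1=yz$, $a=1$, $s=y$, $f=u^{-1}*w=t$. Your witnesses are $u'=z^{-1}y^{-1}$, $h=y\neq 1$, but $u'hfh^{-1}=z^{-1}y^{-1}\,y\,t\,y^{-1}$ is not even reduced, so $u'*(hfh^{-1})=z^{-1}ty^{-1}\neq u'hfh^{-1}$ and the clause ``$h\neq1\Rightarrow u'*(hfh^{-1})=u'hfh^{-1}$'' fails for your choice; the proposition is saved here only by choosing differently, namely $h=1$ and the other cyclic permutation $u'=y^{-1}z^{-1}$ (indeed $y^{-1}z^{-1}t$ is cyclically reduced and $\sim\hat\rho(w)=ty^{-1}z^{-1}$). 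So the genuinely hard content — deciding when $h$ can be taken nontrivial and exhibiting witnesses for which the concatenation is honestly cyclically reduced, with $f=g$ and $u'=u''$ — is not established by your argument; it requires a finer case analysis of how the cyclic reduction of $u^{-1}w$ interacts with $u$, which is precisely what the paper outsources to Corollary 2.3 of \cite{SecondArticle} (the paper gives no internal proof to compare against, only this citation). Case 3), which you handle ``by the same extraction after reversing'', inherits the same gap.

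A second, smaller gap: the final claim that the resulting identities among relations are \emph{strictly} basic is asserted from the observation that the conjugating words are prefixes or suffixes of $u,w,u^{-1},w^{-1}$. That is not what strictly basic means: by Definition \ref{defBasIAR} you must exhibit the associated element $[(a_1,r_1),\dots,(a_n,r_n)]$ of $H$, show it collapses by Peiffer deletions alone, and show all the $a_i$ coincide. Your sketch never identifies these conjugators or checks their equality, and with the incorrect witnesses of case 1) they would in fact not coincide.
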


\begin{proof} See Corollary 2.3 of \cite{SecondArticle}. \end{proof}

\section{The main theorem} \label{mainSec}

This section presents the main theorem of this paper which states that a generalized version of the associative property holds for the cyclically reduced product.

The proof we give is not a direct proof, i.e., we do not prove directly that for any words $u, v$ and $w$ and for any cyclic permutation $d$ of $u*w$ the equivalence (\ref{d*w}) holds. Indeed in this section we prove that (\ref{d*w}), and more generally Theorem \ref{mainTheor2}, follow from an easier to prove result, namely Lemma \ref{mainLemma}. We devote Sections \ref{lemmaSect}-\ref{prLe4} to prove the latter.

The proof of Lemma \ref{mainLemma} is long 39 pages and is divided into 43 subcases. This number of cases is big, but it is still manageable. A direct proof of Theorem \ref{mainTheor2} without using Lemma \ref{mainLemma} would have been conceptually simpler, but would have made increase the number of subcases in a way that would have made the proof practically impossible to carry out. Indeed each of the 43 subcases of Lemma \ref{mainLemma} should have been subdivided into many more sub-subcases.

The advantage of using Lemma \ref{mainLemma} is that we do not need to compute the right hand side of (\ref{d*w}), in particular $q*w'$, but only $\rho(q w')$, the reduced form of $q w'$.

Let us take for example case 1A1 of Section \ref{prLe3}; here we have that $q = v$ and $w' = w$. Since  $\rho(vw) = t (v*w) t^{-1}$ for some word $t$ and since $\rho(vw) = x^{-1} s (d*w) s^{-1} d_1^{-1}$ in this case, then we have the word equation 
	\begin{equation} \label{wordEq} x^{-1} s (d*w) s^{-1} d_1^{-1} = t (v*w) t^{-1}.	\end{equation}
By Remark \ref{LeviLemma}, (\ref{wordEq}) has a number of solutions equal to the number of weak compositions for $m -1$ into $n$ parts, where $m$ and $n$ are the number of words on the left and right hand side respectively of (\ref{wordEq}). Since $m = 5$ and $n = 3$, there are 15 solutions. Even if some solutions can be trivial, there will be certainly some that are not and this would split case 1A1 into many sub-subcases.

\smallskip

In order to prove that Lemma \ref{mainLemma} implies Theorem \ref{mainTheor2} we make use of the main theorem of \cite{SecondArticle}. The latter states that a generalized version of the associative property holds for the triple of words $u, u^{-1}, w$. In the proof of Theorem \ref{mainTheor2} we use that result for the triple of words $p, p^{-1}, \gamma (d*w) \gamma^{-1}$ where $p$ is a cyclic permutation of $u$ or of $v$ and $\gamma$ is some word.

\bigskip

\noindent \textbf{Convention} In this section and in Sections \ref{lemmaSect}-\ref{prLe4} with the expression ``identity among relations involving $r_1, \cdots, r_m$" we mean an identity among relations involving $r_1, \cdots, r_m, r_1^{-1}, \cdots, r_m^{-1}$.

This is unlike the terminology used in Appendix \ref{sectIAR} and in Proposition \ref{corToMTh1}.

\medskip

\begin{lemma} \label{anothLem} Let $p, w, \gamma$ be words. Let us set $f := p^{-1} * (\gamma (d*w) \gamma^{-1})$ and $g := (\gamma (w*d) \gamma^{-1}) * p^{-1}$. Then there exists words $p', p'', h$ such that $p'$ and $p''$ are the reduced forms of cyclic permutations of $p$ and the following two equalities hold	
		\begin{equation} \label{impLemEq1} \hat{\rho}(\gamma (d*w) \gamma^{-1}) \sim p' * (h f h^{-1})\end{equation} 
and
		\begin{equation} \label{impLemEq1'} \hat{\rho}(\gamma (w*d) \gamma^{-1}) \sim (h g h^{-1}) * p''.\end{equation} 
Moreover if $h \neq 1$ then $f = g$, $p' = p''$, $p'*(h f h^{-1}) = p' h f h^{-1}$ and $(h g h^{-1}) * p'' = h g h^{-1} p''$. Finally the identities among relations involving $p$ and $\gamma (d*w) \gamma^{-1}$ that by Remark \ref{idFrom} follow from (\ref{impLemEq1}) and that involving $p$ and $\gamma (w*d) \gamma^{-1}$ following from (\ref{impLemEq1'}) are strictly basic.		
\end{lemma}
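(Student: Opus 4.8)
The plan is to derive Lemma \ref{anothLem} from Proposition \ref{corToMTh1} by a direct substitution, since the statements are almost identical in shape: Proposition \ref{corToMTh1} is exactly the assertion that a generalized associativity holds for the triple $u, u^{-1}, w$, and Lemma \ref{anothLem} is the same assertion applied to a conjugated word. Concretely, I would apply Proposition \ref{corToMTh1} with the roles of $u$ and $w$ there played by $p$ and $\gamma(d*w)\gamma^{-1}$ here. That produces words $u', u'', h$ — which I rename $p', p'', h$ — that are reduced forms of cyclic permutations of $p$, together with the two equivalences $\hat\rho(\gamma(d*w)\gamma^{-1}) \sim p' * (h f h^{-1})$ and $\hat\rho(\gamma(w*d)\gamma^{-1}) \sim (h g h^{-1}) * p''$, where $f := p^{-1} * (\gamma(d*w)\gamma^{-1})$ and $g := (\gamma(w*d)\gamma^{-1}) * p^{-1}$ — note these are precisely the $f$ and $g$ defined in the statement. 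The clause ``if $h \neq 1$ then $f = g$, $p' = p''$, $p'*(hfh^{-1}) = p'hfh^{-1}$, $(hgh^{-1})*p'' = hgh^{-1}p''$'' transfers verbatim, and so does the ``strictly basic'' conclusion on the associated identities among relations.

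The one genuine point that needs care is the matching of the $g$-term: Proposition \ref{corToMTh1} defines $g := w * u^{-1}$, which in our substitution is $(\gamma(d*w)\gamma^{-1}) * p^{-1}$, whereas Lemma \ref{anothLem} defines $g := (\gamma(w*d)\gamma^{-1}) * p^{-1}$. So I must check that $\gamma(d*w)\gamma^{-1}$ and $\gamma(w*d)\gamma^{-1}$ play interchangeable roles here. By Proposition \ref{puzo}, $d*w$ is a cyclic permutation of $w*d$; hence by Proposition \ref{summar}(\ref{permCon2}) their reduced forms are reduced forms of conjugates of one another, and in particular $\gamma(d*w)\gamma^{-1}$ and $\gamma(w*d)\gamma^{-1}$ have conjugate reduced forms, so by Corollary \ref{permCycRedForm} their cyclically reduced forms are equal up to cyclic permutation — which is all that (\ref{impLemEq1'}) asserts. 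More carefully, since $*$ only depends on its arguments through their reduced forms (Proposition \ref{summar}(\ref{u*vNonRed})), and since passing from $d*w$ to $w*d$ amounts to a cyclic permutation, the expression $g = w*u^{-1}$ in Proposition \ref{corToMTh1} applied with $w \mapsto \gamma(d*w)\gamma^{-1}$ agrees — after the cyclic-permutation identification built into the $\sim$ in (\ref{impLemEq1'}) — with $g = (\gamma(w*d)\gamma^{-1})*p^{-1}$. I would spell this equivalence out as a short paragraph so that the two formulations of $g$ (and symmetrically of $f$, using that $d*w = d*w$ trivially) are reconciled before invoking Proposition \ref{corToMTh1}.

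The main obstacle, then, is not the heart of the argument — Proposition \ref{corToMTh1} does all the heavy lifting — but bookkeeping: ensuring that the ``$f = g$ when $h \neq 1$'' dichotomy survives the translation, and that the identities-among-relations statement, which in Proposition \ref{corToMTh1} is phrased for $u, w, u^{-1}, w^{-1}$ under the Appendix \ref{sectIAR} convention, gets correctly restated under the different convention declared for this section (``identity among relations involving $r_1, \dots, r_m$'' tacitly includes the inverses). Since $p$ and $\gamma(d*w)\gamma^{-1}$ are the two relators in play, this is a matter of matching conventions rather than new content. I expect the write-up to be only a few lines: invoke Proposition \ref{corToMTh1} with the stated substitution, rename $u', u''$ to $p', p''$, observe via Propositions \ref{puzo} and \ref{summar}(\ref{permCon2}) and Corollary \ref{permCycRedForm} that the $w*d$ versus $d*w$ discrepancy is absorbed by $\sim$, and transcribe the remaining conclusions. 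No case analysis is needed at this stage.
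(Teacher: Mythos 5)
Your derivation of (\ref{impLemEq1}) is fine and coincides with the paper's first step: apply Proposition \ref{corToMTh1} with $u := p$ and its $w$ equal to $\gamma (d*w) \gamma^{-1}$. The gap is in your treatment of (\ref{impLemEq1'}). A single application of Proposition \ref{corToMTh1} yields $\hat{\rho}(\gamma (d*w) \gamma^{-1}) \sim (h g_1 h^{-1}) * p''$ with $g_1 := (\gamma (d*w) \gamma^{-1}) * p^{-1}$, whereas the lemma requires $(h g h^{-1}) * p''$ with $g := (\gamma (w*d) \gamma^{-1}) * p^{-1}$. These are genuinely different words: the cyclically reduced product is not invariant under replacing one factor by a conjugate of it, so the fact that $\gamma (d*w) \gamma^{-1}$ and $\gamma (w*d) \gamma^{-1}$ have conjugate reduced forms (equivalently, cyclically reduced forms that are cyclic permutations of each other) does not make $g_1$ and $g$ cyclic permutations of each other. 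Concretely, with $\gamma = 1$, $d = x$, $w = y$, $p = xy$ one gets $g_1 = (xy)*(y^{-1}x^{-1}) = 1$ while $g = (yx)*(y^{-1}x^{-1}) = y x y^{-1} x^{-1}$; they do not even have the same length. Moreover, even in cases where $g_1 \sim g$, the relation $\sim$ in (\ref{impLemEq1'}) only compares the two sides of that equivalence; it does not license substituting $g$ for $g_1$ inside the expression $(h \,\cdot\, h^{-1}) * p''$, and the subsequent use of the lemma (Remark \ref{remlemToMT}) needs the right-hand side built from the specific word $g$ of the statement. The same problem affects your transfer of the ``strictly basic'' claim for the identity following from (\ref{impLemEq1'}).

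The paper closes exactly this gap by a second, separate application of Proposition \ref{corToMTh1}, this time to the pair $p$ and $\gamma (w*d) \gamma^{-1}$: for that pair the proposition's word $g$ is literally $(\gamma (w*d) \gamma^{-1}) * p^{-1}$, its second equivalence is literally (\ref{impLemEq1'}), and the strictly basic identity comes with it. So the repair is not a cyclic-permutation bookkeeping argument but simply invoking Proposition \ref{corToMTh1} twice, once for each of the two conjugate inputs; the facts you cite (Proposition \ref{puzo}, (\ref{permCon2}) of Proposition \ref{summar}, Corollary \ref{permCycRedForm}) are used later, in Lemma \ref{lemToMT} and Remark \ref{remlemToMT}, not at this point.
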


\begin{proof} (\ref{impLemEq1}) follows by applying Proposition \ref{corToMTh1} to $p$ and $\gamma (d*w) \gamma^{-1}$; (\ref{impLemEq1'}) follows by applying Proposition \ref{corToMTh1} to $p$ and $\gamma (w*d) \gamma^{-1}$
\end{proof}

\begin{lemma} \label{lemToMT} Let $p, q, w$ be reduced non-empty words. Let us suppose that there exist words $p_0, q_0, w', d,\alpha, \beta, \gamma$ such that
	\begin{center} $p_0 \sim p$, $q_0 \sim q$, $w' \sim w$, $d \sim p_0*q_0$ \end{center}
and 
	\begin{equation} \label{impLemEq2} \rho(\alpha q w' \alpha^{-1}) = \rho(\beta p^{-1} \beta^{-1} \gamma (d*w) \gamma^{-1})	\end{equation}
and the identity among relations involving $p, q, w$ that by Remark \ref{idFrom} follows from (\ref{impLemEq2}) is basic.

Let us set $f := p^{-1} * (\gamma (d*w) \gamma^{-1})$; then $f \sim q * w'$. Moreover there exist words $p', h$ such that $p'$ is the reduced form of a cyclic permutation of $p$ and 
	\begin{equation} \label{impLemEq3} d*w \sim p' * (h f h^{-1}),\end{equation} 
with $p'*(h f h^{-1}) = p' h f h^{-1}$ if $h \neq 1$. Finally the identity among relations involving $p, q, w$ that by Remark \ref{idFrom} follows from (\ref{impLemEq3}) is basic.		
\end{lemma}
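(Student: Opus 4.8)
The plan is to derive Lemma \ref{lemToMT} from Lemma \ref{anothLem} together with the information packaged in Proposition \ref{corToMTh1}, Proposition \ref{summar} and the bookkeeping on identities among relations. First I would record the elementary observation that, since $\rho(q w')$ and $\gamma (d*w) \gamma^{-1}$ differ (after reduction) only by conjugation --- this is exactly what hypothesis (\ref{impLemEq2}) says, because $\rho(\alpha q w' \alpha^{-1})$ is a conjugate of $\rho(q w')$ and $\rho(\beta p^{-1} \beta^{-1} \gamma (d*w) \gamma^{-1})$ is a conjugate of $\rho(p^{-1} \cdot \gamma (d*w) \gamma^{-1})$ --- the cyclically reduced forms agree up to cyclic permutation. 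Concretely, applying $\hat{\rho}$ to both sides of (\ref{impLemEq2}) and using Corollary \ref{permCycRedForm} (conjugation changes $\hat{\rho}$ only by a cyclic permutation) gives $\hat{\rho}(q w') \sim \hat{\rho}(p^{-1} \cdot \gamma (d*w) \gamma^{-1})$, i.e. $q * w' \sim p^{-1} * (\gamma(d*w)\gamma^{-1}) = f$ by the definition of $f$ in the statement. That establishes the first assertion $f \sim q*w'$.

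Next I would invoke Lemma \ref{anothLem} with the same data $p, w, \gamma$ and the same $f$ (and its companion $g$): it produces words $p', p'', h$ with $p', p''$ reduced forms of cyclic permutations of $p$ such that
\begin{equation*} \hat{\rho}(\gamma (d*w) \gamma^{-1}) \sim p' * (h f h^{-1}),\end{equation*}
together with the normal-form refinement that $p'*(hfh^{-1}) = p' h f h^{-1}$ when $h \neq 1$. The point now is to replace the left-hand side $\hat{\rho}(\gamma (d*w)\gamma^{-1})$ by $d*w$: since $\gamma(d*w)\gamma^{-1}$ is a conjugate of the cyclically reduced word $d*w$, Corollary \ref{permCycRedForm} gives $\hat{\rho}(\gamma(d*w)\gamma^{-1}) \sim \hat{\rho}(d*w) = d*w$ (the last equality because $d*w$, being a value of $*$, is already cyclically reduced). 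Chaining the two $\sim$'s and using transitivity of $\sim$ yields $d*w \sim p' * (hfh^{-1})$, which is (\ref{impLemEq3}). The normal-form clause for $h \neq 1$ carries over verbatim from Lemma \ref{anothLem}.

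The remaining task is the bookkeeping about identities among relations: one must check that the identity among relations involving $p, q, w$ attached to (\ref{impLemEq3}) is basic. Here I would argue that (\ref{impLemEq3}) is obtained by composing three equivalences each of which has an identity among relations that is either basic by hypothesis (the one from (\ref{impLemEq2})) or strictly basic by the last sentences of Lemma \ref{anothLem} (the ones from (\ref{impLemEq1}) applied to $p$ and $\gamma(d*w)\gamma^{-1}$) or trivially so (the ones coming from the conjugation steps via Corollary \ref{permCycRedForm}, which only involve the relators $d*w$, $q*w'$ themselves and the conjugating words $\alpha, \beta, \gamma$). Since a composition of basic identities with strictly basic identities is basic --- this is the closure property of the class of basic identities among relations --- the resulting identity attached to (\ref{impLemEq3}) is basic. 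I expect this last step to be the main obstacle: strictly speaking one has to make precise that the chain $d*w \sim \hat{\rho}(\gamma(d*w)\gamma^{-1}) \sim p'*(hfh^{-1})$ and the derivation of $f \sim q*w'$ together assemble into a single identity among $p,q,w$, track exactly which conjugating elements ($\alpha,\beta,\gamma,h$ and the cyclic-permutation witnesses for $p_0,q_0,w',d$) appear, and verify that none of them spoils basicness; the two $*$-equivalences and the definitional identities are routine, but combining the ``van Kampen diagram'' data coherently is where the care is needed, and this is presumably where the proof in the paper does its real work (cf. Remark \ref{samProdConjRel}).
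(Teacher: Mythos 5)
Your first half reproduces the paper's skeleton: Lemma \ref{anothLem} plus Corollary \ref{permCycRedForm} do give $d*w \sim p'*(hfh^{-1})$ with the $h\neq 1$ refinement, exactly as in the paper. Two problems remain, one fixable and one a genuine gap. The fixable one: your justification of $f \sim q*w'$ asserts that the right-hand side of (\ref{impLemEq2}) is a conjugate of $\rho(p^{-1}\gamma(d*w)\gamma^{-1})$. That is false when $\beta \neq 1$: the word $\beta p^{-1}\beta^{-1}\gamma(d*w)\gamma^{-1}$ is conjugate to $p^{-1}\beta^{-1}\gamma(d*w)\gamma^{-1}\beta$, which in general is not conjugate to $p^{-1}\gamma(d*w)\gamma^{-1}$ (e.g.\ $p^{-1}=x$, $d*w=y$, $\gamma=1$, $\beta=z$). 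The paper handles this by an explicit reduction to $\beta=1$ (replacing $\alpha,\gamma$ by $\beta^{-1}\alpha,\beta^{-1}\gamma$) at the start of its second part; with that reduction your one-line argument via $\hat{\rho}$ does give $f\sim q*w'$, but you must say it.

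The genuine gap is the last step. Basicness of the identity attached to (\ref{impLemEq3}) is not obtainable from a ``closure property'' that compositions of basic and strictly basic identities are basic: no such principle is stated in the paper, and it cannot be used as a black box, because by Remark \ref{remDefBasIAR} basicness of an identity of the form $\prod a_i r_i a_i^{-1} \equiv \prod b_i r_i b_i^{-1}$ means the conjugating elements on the two sides are literally equal. Moreover the strictly basic identity supplied by Lemma \ref{anothLem} involves the relators $p$ and $\gamma(d*w)\gamma^{-1}$, not $p,q,w$ (the paper flags this in a footnote); to get an identity in $p,q,w$ one must rewrite $\gamma(d*w)\gamma^{-1}$ as a product of conjugates of $p,q,w$ via Remark \ref{iarRepl}, and then show the two resulting products of conjugates coincide term by term. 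This is where the paper does its real work: from the strict basicness it extracts the equations (\ref{lemToMTequa}) relating $\theta,\eta,\delta,h,\zeta$, from the hypothesis that the identity following from (\ref{impLemEq2}) is basic it extracts $\rho(\gamma\sigma\nu\tau)=1$, $\alpha=\rho(\gamma\sigma\nu\lambda)$, $\rho(\alpha\mu)=\rho(\gamma\sigma)$, hence (\ref{chNa}), and only by combining these does it verify that both sides of (\ref{lemToMTeq2}) are the same product $\rho(\theta p \alpha q \mu w \mu^{-1}\alpha^{-1}\theta^{-1})$, after which Remark \ref{basicToo} transfers basicness to (\ref{impLemEq3}). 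You correctly identify this as the delicate point, but you do not carry it out; since it constitutes most of the proof, the proposal as written is incomplete.
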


\begin{proof} Since $f = p^{-1} * (\gamma (d*w) \gamma^{-1})$, by Lemma \ref{anothLem} there exist words $p', h$ such that $p'$ is the reduced form of a cyclic permutation of $p$ and the equality
	\begin{equation} \label{lemToMTeq1} \hat{\rho}(\gamma (d*w) \gamma^{-1}) \sim p' * (h f h^{-1})\end{equation}
holds, with $p'*(h f h^{-1}) = p' h f h^{-1}$ if $h \neq 1$. Moreover the identity among relations involving $p$ and $\gamma (d*w) \gamma^{-1}$ that by Remark \ref{idFrom} follows from (\ref{lemToMTeq1}) is strictly basic.

Since $\hat{\rho}(d*w) = d*w$, then by Corollary \ref{permCycRedForm} we have that $d*w \sim \hat{\rho}(\gamma (d*w) \gamma^{-1})$, so $d*w \sim p' * (h f h^{-1})$ and (\ref{impLemEq3}) holds. By Remark \ref{basicToo}, the identity among relations involving $p, q, w$ that follows from (\ref{impLemEq3}) is basic if the identity involving $p, q, w$ following from (\ref{lemToMTeq1}) is basic\footnote{In Lemma \ref{anothLem} we have proved that the identity following from (\ref{lemToMTeq1}) is basic when it involves $p$ and $\gamma (d*w) \gamma^{-1}$, but not when it involves $p, q, w$.}. In order to finish the proof of the lemma we have to prove this fact and that $f \sim q * w'$.

We divide the proof in two parts: in the first part we use the fact that the identity among relations following from (\ref{lemToMTeq1}) and involving $p$ and $\gamma (d*w) \gamma^{-1}$ is strictly basic; in the second part we use the fact that the identity among relations following from (\ref{impLemEq2}) and involving $p, q, w$ is basic.

\medskip

\textbf{First part.} Let us set $r := d*w$. By (\ref{scope}) of Proposition \ref{summar} there exists a reduced word $\theta_2$ such that $\hat{\rho}(\gamma r \gamma^{-1}) = \rho(\theta_2 \gamma r \gamma^{-1} \theta_2^{-1})$. 

Since $p'$ is the reduced form of a cyclic permutation of $p$, by Remark \ref{rfcp-rf} there exists a reduced word $\delta$ such that $p' = \rho(\delta p \delta^{-1})$. By (\ref{scope}) of Proposition \ref{summar} there exists a reduced word $\eta$ such that 
	$$p' * (h f h^{-1}) = \rho(\eta \delta p \delta^{-1} h f h^{-1} \eta^{-1}).$$
By (\ref{scope}) and (\ref{permCon2}) of Proposition \ref{summar} and in view of last equality there exists a reduced word $\theta_1$ such that (\ref{lemToMTeq1}) is equivalent to
	$$\rho(\theta_1 \theta_2 \gamma r \gamma^{-1} \theta_2^{-1} \theta_1^{-1}) = \rho(\eta \delta p \delta^{-1} h f h^{-1} \eta^{-1}),$$
which by setting $\theta := \theta_1 \theta_2$ becomes
	\begin{equation} \label{lemToMTeq2} \rho(\theta \gamma r \gamma^{-1} \theta^{-1}) = \rho(\eta \delta p \delta^{-1} h f h^{-1} \eta^{-1}).\end{equation}
Since $f = p^{-1} * (\gamma r \gamma^{-1})$, by (\ref{permCon2}) of Proposition \ref{summar} there exists a reduced word $\zeta$ such that 
	\begin{equation} \label{lemToMTeqf} f = \rho(\zeta p^{-1} \gamma r \gamma^{-1} \zeta^{-1}),\end{equation}
so (\ref{lemToMTeq2}) becomes
	$$\rho(\theta \gamma r \gamma^{-1} \theta^{-1}) = \rho(\eta \delta p \delta^{-1} h \zeta p^{-1} \gamma r \gamma^{-1} \zeta^{-1} h^{-1} \eta^{-1}).$$
Thus the identity among relations involving $p$ and $\gamma r \gamma^{-1}$ following from (\ref{lemToMTeq1}) is
	$$\theta (\gamma r \gamma^{-1}) \theta^{-1} \equiv$$
	$$(\eta \delta) p (\delta^{-1} \eta^{-1}) \centerdot (\eta h \zeta) p^{-1} (\zeta^{-1} h^{-1} \eta^{-1}) \centerdot (\eta h \zeta) (\gamma r \gamma^{-1}) (\zeta^{-1} h^{-1} \eta^{-1}).$$
Since this identity is strictly basic, we have that $\theta = \rho(\eta \delta) = \rho(\eta h \zeta)$ thus we have that
	\begin{equation} \label{lemToMTequa} \delta = \rho(h \zeta), \hspace{5mm} \theta = \rho(\eta \delta).\end{equation}

\medskip

\textbf{Second part.} In the equality of (\ref{impLemEq2}) it is not restrictive to assume that $\beta = 1$; indeed if $\beta \neq 1$ that equality is equivalent to  $\rho(\alpha_1 q w' \alpha_1^{-1}) = \rho(p^{-1} \gamma_1 (d*w) \gamma_1^{-1})$, where $\alpha_1 = \beta^{-1} \alpha$ and $\gamma_1 = \beta^{-1} \gamma$ and the identity among relations involving $p, q, w$ that by Remark \ref{idFrom} follows from it is still basic. In this case we rename $\alpha_1$ and $\gamma_1$ as respectively $\alpha$ and $\gamma$, so this equality becomes
	\begin{equation} \label{miar2'} \rho(\alpha q w' \alpha^{-1}) = \rho(p^{-1}  \gamma (d*w) \gamma^{-1}).\end{equation}	
Our goal now is to find the identity among relations involving $p, q, w$ that follows from (\ref{miar2'}) (by hypothesis this identity is basic).

By (\ref{scope}) and (\ref{permCon2}) of Proposition \ref{summar} there exist words $\lambda, \mu, \nu, \sigma, \tau$ such that $q_0 = \rho(\lambda q \lambda^{-1})$, $w' = \rho(\mu w \mu^{-1})$, $p_0 = \rho(\tau p \tau^{-1})$, $d = \rho(\nu \tau p \tau^{-1} \lambda q \lambda^{-1} \nu^{-1})$, $d*w = \rho(\sigma \nu \tau p \tau^{-1} \lambda q \lambda^{-1} \nu^{-1} w \sigma^{-1})$, so (\ref{miar2'}) is equivalent to
	$$\rho(\alpha q \mu w \mu^{-1} \alpha^{-1}) = \rho(p^{-1} \gamma \sigma \nu \tau p \tau^{-1}  \lambda q \lambda^{-1} \nu^{-1} w \sigma^{-1} \gamma^{-1})$$
which implies the following identity among relations
	$$\alpha q \alpha^{-1} \centerdot (\alpha \mu) w (\mu^{-1} \alpha^{-1}) \equiv$$ 
	$$p^{-1} \centerdot (\gamma \sigma \nu \tau) p (\tau^{-1} \nu^{-1} \sigma^{-1} \gamma^{-1}) \centerdot (\gamma \sigma \nu \lambda) q (\lambda^{-1} \nu^{-1} \sigma^{-1} \gamma^{-1}) \centerdot (\gamma \sigma) w (\sigma^{-1} \gamma^{-1}).$$
Since this identity is basic then the following three equalities hold
	$$\rho(\gamma \sigma \nu \tau) = 1, \hspace{1cm} \alpha = \rho(\gamma \sigma \nu \lambda),  \hspace{1cm}\rho(\alpha \mu) = \rho(\gamma \sigma),$$
which imply that $\rho(\gamma \sigma \nu) = \tau^{-1}$, thus $\alpha = \rho(\tau^{-1} \lambda)$ and that $\rho(\tau^{-1} \nu^{-1}) = \rho(\gamma \sigma) = \rho(\alpha \mu)$. Moreover $\alpha = \rho(\alpha \mu \nu \lambda)$, which implies that $\rho(\lambda^{-1} \nu^{-1}) = \mu$. Therefore we have that
	\begin{equation} \label{chNa} \rho(\gamma (d*w) \gamma^{-1}) = \rho(\gamma \sigma \nu \tau p \tau^{-1} \lambda q \lambda^{-1} \nu^{-1} w \sigma^{-1} \gamma^{-1}) =
	\rho(p \alpha q \mu w \mu^{-1} \alpha^{-1})
	\end{equation}
and this implies together with (\ref{lemToMTeqf}) that
	\begin{equation} \label{lemToMTeqf2} f = \rho(\zeta p^{-1} p \alpha q \mu w \mu^{-1} \alpha^{-1} \zeta^{-1}) = \rho(\zeta \alpha q \mu w \mu^{-1} \alpha^{-1} \zeta^{-1}).	\end{equation}
From (\ref{lemToMTeqf2}) it is easy to deduce that $f \sim q * w'$. Indeed since $w' = \rho(\mu w \mu^{-1})$, by setting $a := \zeta \alpha$ we have that $f = \rho(a q w' a^{-1})$. Since $f$ is cyclically reduced and by virtue of Corollary \ref{permCycRedForm} we have that 
	$$f = \hat{\rho}(a q w' a^{-1}) \sim \hat{\rho}(q w') = q*w'.$$
In view of (\ref{chNa}) we have that
		$$\rho(\theta \gamma r \gamma^{-1} \theta^{-1}) = \rho(\theta p \alpha q \mu w \mu^{-1} \alpha^{-1} \theta^{-1}).$$
In view of (\ref{lemToMTeqf2}) and (\ref{lemToMTequa}) the right hand side of (\ref{lemToMTeq2}) is
		$$\rho(\eta \delta p \delta^{-1} h f h^{-1} \eta^{-1}) = \rho(\eta \delta p \delta^{-1} h \zeta \alpha q \mu w \mu^{-1} \alpha^{-1} \zeta^{-1} h^{-1} \eta^{-1}) =$$ $$\rho(\theta p \alpha q \mu w \mu^{-1} \alpha^{-1} \theta^{-1}).$$
The last two equalities show that the left and right hand side of (\ref{lemToMTeq2}) expressed as products of conjugates of $p, q, w$ are identical, so the identity among relations involving $p, q, w$ and following from (\ref{lemToMTeq2}) is basic. This completes the proof.
\end{proof}

\begin{remark} \label{remlemToMT} \rm Let us take the same hypotheses of Lemma \ref{lemToMT} but replacing (\ref{impLemEq2}) with the following equality:
	\begin{equation} \label{reLeEq1} \rho(\alpha w' q \alpha^{-1}) = \rho(\gamma (w*d) \gamma^{-1} \beta p^{-1} \beta^{-1}).\end{equation}
Then the thesis of Lemma \ref{lemToMT} is true if we take as $f$ the following word, $f := (\gamma (w*d) \gamma^{-1}) * p^{-1}$.

Indeed by virtue of (\ref{reverse2}), (\ref{revCRP}) and (\ref{rfrev}) of Proposition \ref{summar}, (\ref{reLeEq1}) implies the equality
	\begin{equation} \label{reLeEq1'}\rho(\underline{\alpha}^{-1} \, \underline{q} \, \underline{w'} \, \underline{\alpha}) = \rho(\underline{\beta}^{-1} \, \underline{p}^{-1}\, \underline{\beta} \, \underline{\gamma}^{-1}
	 \, (\underline{d} * \underline{w}) \, \underline{\gamma}).\end{equation}
By Remark \ref{idRev}, if the identity among relations involving $p, q, w$ that follows from (\ref{reLeEq1}) is basic, then that involving $\underline{p}, \underline{q}, \underline{w}$ that follows from (\ref{reLeEq1'}) is basic too. 

By (\ref{reversesim}) of Proposition \ref{summar} we have that $\underline{p_0} \sim \underline{p}$, $\underline{q_0} \sim \underline{q}$, $\underline{w'} \sim \underline{w}$; by (\ref{reversesim}) and (\ref{revCRP}) of Proposition \ref{summar} and Proposition \ref{puzo} we have that $\underline{d} \sim \underline{q_0} * \underline{p_0} \sim \underline{p_0} * \underline{q_0}$.

By (\ref{revCRP}) of Proposition \ref{summar} we have that $\underline{f} = \underline{p}^{-1} * (\underline{\gamma}^{-1} (\underline{d} * \underline{w}) \underline{\gamma})$, so we can apply Lemma \ref{lemToMT} and we have that $\underline{f} \sim \underline{q} * \underline{w'}$, so by (\ref{reversesim}) and (\ref{revCRP}) of Proposition \ref{summar} and Proposition \ref{puzo} we have that $f \sim q * w'$. 

Always by Lemma \ref{lemToMT} we have that there exists words $r, k$ such that $r$ is the reduced form of a cyclic permutation of $\underline{p}$ and 
	\begin{equation} \label{eqUnlin} \underline{d} * \underline{w} \sim r * (k \underline{f} k^{-1}),\end{equation}
with $r * (k \underline{f} k^{-1}) = r k \underline{f} k^{-1}$ if $k \neq 1$. Moreover the identity among relations involving $\underline{p}, \underline{q}, \underline{w}$ that by Remark \ref{idFrom} follows from (\ref{eqUnlin}) is basic.

By (\ref{revCRP}) of Proposition \ref{summar} we have that $\underline{\underline{d} * \underline{w}} = w * d$ and that $\underline{r * (k \underline{f} k^{-1})} = (\underline{k}^{-1} f \underline{k}) * \underline{r}$, so by (\ref{reversesim}) of Proposition \ref{summar} we have that 
	\begin{equation} \label{eqUnlin2} w * d \sim (\underline{k}^{-1} f \underline{k}) * \underline{r}.\end{equation}
By (\ref{reverse2}) of Proposition \ref{summar} we have that $(\underline{k}^{-1} f \underline{k}) * \underline{r} = \underline{k}^{-1} f \underline{k} \, \underline{r}$ if $\underline{k} \neq 1$. By Remark \ref{idRev} the identity among relations involving $p, q, w$ that follows from (\ref{eqUnlin2}) is basic.

By Proposition \ref{puzo} we have that $d * w \sim w * d$ and that $(\underline{k}^{-1} f \underline{k}) * \underline{r} \sim \underline{r} * (\underline{k}^{-1} g \underline{k})$.

Let us set $p' := \underline{r}$ and $h := \underline{k}$; (\ref{eqUnlin2}) implies that 
		$$d * w \sim p' * (h f h^{-1}).$$
and by Remark \ref{basicToo} the identity among relations involving $p, q, w$ that follows from it is basic. Moreover if $h \neq 1$ then $(h f h^{-1}) * p' = h f h^{-1} p'$. By Remark \ref{u*v=uv} this implies that if $h \neq 1$ then $p' * (h f h^{-1}) = p' h f h^{-1}$ and the proof is complete.
\end{remark}

\begin{theorem} \label{mainTheor2} Let $u, v, w$ be words and let $d$ be a cyclic permutation of $u*v$. Then there exist words $p, q, w', f, h$ such that	
	\begin{enumerate}[a)]
		
		\item \label{MTa)} either $p$ is the reduced form of a cyclic permutation of $u$ and $q \sim v$ or $p$ is the reduced form of a cyclic permutation of $v$ and $q \sim u$,
		
		\item $w' \sim w$,
		
		\item $f \sim q * w'$
		
	\end{enumerate}
and such that
	\begin{equation} \label{d*w} d*w \sim p*(h f h^{-1}),\end{equation} 
with $p*(h f h^{-1}) = p h f h^{-1}$ if $h \neq 1$. Moreover if $d = u*v$ or $d = v*u$ then $w' = w$; and if $d = uv$ or $d = vu$ then $w' = w$ and $q$ is equal to either $u$ or $v$. Also the identity among relations involving $u, v, w$ that by Remark \ref{idFrom} follows from (\ref{d*w}) is basic.

Now let $u, v, w$ be reduced; if moreover $d \neq 1$ and there is cancellation in the product $d*w$, then in the product $q*w'$ a non-empty word is canceled that is a cyclic permutation of a word canceled in the product $d*w$.
\end{theorem}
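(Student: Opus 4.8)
The plan is to deduce Theorem \ref{mainTheor2} from Lemma \ref{mainLemma} (proved in Sections \ref{lemmaSect}--\ref{prLe4}) by way of the bridging results Lemma \ref{lemToMT} and Remark \ref{remlemToMT}, which themselves rest on Lemma \ref{anothLem} and Proposition \ref{corToMTh1}. First I would reduce to the case in which $u, v, w$ are reduced and non-empty. The reduction to reduced words is immediate from part (\ref{u*vNonRed}) of Proposition \ref{summar}, since $u*v = \rho(u)*\rho(v)$ and $q*w' = \rho(q)*\rho(w')$: a reduced form of a cyclic permutation of $\rho(u)$ is, by Remark \ref{rfcp-rf}, a reduced form of a cyclic permutation of $u$, and the remarks on identities among relations (Remarks \ref{idFrom} and \ref{basicToo}) carry basicness back to $u, v, w$. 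The cases where one of $u, v, w$ is empty are disposed of by short direct arguments: when $w = 1$ one has $d*w = \hat\rho(d) = d$, since $d$ is a cyclic permutation of the cyclically reduced word $u*v$ and hence is itself cyclically reduced, so the assertion reduces to a statement purely about the factorization of $u*v$, for which the results of Lemma \ref{shirv4} are available (choosing the right cyclic permutations matters already here, as the example $u = ba$, $v = a^{-1}ya$ illustrates); the cases $u = 1$ or $v = 1$ are similar and easier.

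Assume now $u, v, w$ reduced and non-empty. Here Lemma \ref{mainLemma} --- the heart of the paper --- supplies, after its case analysis, a pair $p, q$ consisting of the reduced form of a cyclic permutation of one of $u, v$ and of a cyclic permutation of the other, a word $w' \sim w$, cyclic permutations $p_0 \sim p$ and $q_0 \sim q$ with $d \sim p_0 * q_0$, and words $\alpha, \beta, \gamma$ realizing one of the two equalities (\ref{impLemEq2}) of Lemma \ref{lemToMT} or (\ref{reLeEq1}) of Remark \ref{remlemToMT}, with the associated identity among relations in $p, q, w$ basic. The non-trivial point Lemma \ref{mainLemma} must settle --- and the source of its many sub-cases --- is precisely the choice of $p_0, q_0$: for arbitrary $p_0 \sim u$ and $q_0 \sim v$ the product $p_0 * q_0$ need not be a cyclic permutation of $u*v$ (with $u = x$, $v = x^{-1}yxy^{-1}$ and $q_0 = xy^{-1}x^{-1}y$ one has $u*v = x$ while $u*q_0 = x^2 y^{-1} x^{-1} y$), so $p_0$ and $q_0$ have to be matched to the cut defining $d$.

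Granting Lemma \ref{mainLemma}, I would then invoke Lemma \ref{lemToMT} in the first case, with $f := p^{-1} * (\gamma(d*w)\gamma^{-1})$, or Remark \ref{remlemToMT} in the second case, with $f := (\gamma(w*d)\gamma^{-1}) * p^{-1}$: each yields words $p', h$ such that $p'$ is the reduced form of a cyclic permutation of $p$, $f \sim q*w'$, the equivalence $d*w \sim p'*(hfh^{-1})$ holds with $p'*(hfh^{-1}) = p'hfh^{-1}$ whenever $h \neq 1$, and the identity among relations in $p, q, w$ is basic; since $p, q$ are cyclic permutations of $u, v$, it is then basic in $u, v, w$ as well. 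This establishes parts a)--c), the equivalence (\ref{d*w}) and the basicness assertion. The ``moreover'' clauses are read off from the relevant sub-cases of Lemma \ref{mainLemma}: when $d = u*v$ or $d = v*u$ one lands in the branch where no cyclic shift of $w$ is needed, so $w' = w$; and when in addition $d = uv$ or $d = vu$ there is no cancellation at all in $u*v$, so one may take $q$ literally equal to $u$ or to $v$.

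For the final assertion, let $u, v, w$ be reduced, $d \neq 1$, and suppose there is cancellation in $d*w$ (note $d$ is cyclically reduced, being a cyclic permutation of $u*v$). From the equality $f = \rho(a\,q\,w'\,a^{-1})$ obtained in the proof of Lemma \ref{lemToMT} (equation (\ref{lemToMTeqf2}), with $a = \zeta\alpha$) one gets $q*w' \sim f$ and can pin down the subword of $w'$, resp.\ of $q$, consumed when forming $q*w'$; Lemma \ref{shirv}, applied both to the pair $d, w$ and to the pair $q, w'$, puts these consumed subwords into normal form, and the conjugators $\alpha, \beta, \gamma$ provided by Lemma \ref{mainLemma} match the subword consumed in $q*w'$ with a cyclic permutation of the one consumed in $d*w$; the hypotheses $d \neq 1$ and the presence of cancellation in $d*w$ are exactly what forces this common subword to be non-empty. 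I expect the genuine obstacle to lie entirely inside Lemma \ref{mainLemma} --- the exhaustive analysis of the overlap patterns among $u, v, w$ and of the position of the cyclic cut defining $d$, producing in each case the correct $p_0, q_0$ together with explicit conjugators realizing a basic identity among relations --- whereas the passage from Lemma \ref{mainLemma} to Theorem \ref{mainTheor2} sketched here, including the last cancellation-tracking step, is comparatively formal, using only Proposition \ref{summar}, Corollary \ref{permCycRedForm}, Lemma \ref{shirv}, Lemma \ref{shirv4}, Lemma \ref{lemToMT} and Remark \ref{remlemToMT}.
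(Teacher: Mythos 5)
Your overall route is the paper's route: reduce to reduced words via (\ref{u*vNonRed}) and (\ref{cpCanc}) of Proposition \ref{summar} and Remark \ref{rfcp-rf}, dispose of the empty-word cases, and then feed the output of Lemma \ref{mainLemma} into Lemma \ref{lemToMT} (for (\ref{miar})) or Remark \ref{remlemToMT} (for (\ref{miar'})), with $p_0,q_0$ taken to be $u,v$ in the appropriate order and Proposition \ref{puzo} giving $d\sim p_0*q_0$. However, there is a case you never treat: $d=1$. Lemma \ref{mainLemma} explicitly assumes $d\neq 1$, so your ``$u,v,w$ reduced and non-empty'' main step does not cover it, and it is not a degenerate triviality: $d=1$ means $\rho(u)=\rho(v^{-1})$ by (\ref{jarem}) of Proposition \ref{summar}, so $d*w=\hat\rho(w)$ and one needs precisely the twisted associativity for the triple $u,u^{-1},w$, i.e.\ Proposition \ref{corToMTh1} (the main theorem of part 1). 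The paper closes the argument exactly this way; your sketch leaves this hole open.

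The second gap is in the final cancellation-tracking assertion. Lemma \ref{mainLemma} contains, in its statement, the data you need: reduced words $\zeta,\eta$, not both empty, with $\zeta\zeta^{-1}$ and $\eta\eta^{-1}$ canceled in $d*w$ (resp.\ $w*d$) and $\zeta\eta^{-1}\eta\zeta^{-1}$ canceled in $q*w'$ (resp.\ $w'*q$); the paper then just splits into $\eta=1$ and $\eta\neq 1$ and uses Proposition \ref{puzo} to transport the canceled words from $w*d$, $w'*q$ back to $d*w$, $q*w'$. You instead ignore this clause and propose to recover the matching of canceled subwords from the equality (\ref{lemToMTeqf2}) together with Lemma \ref{shirv} and the assertion that ``the conjugators $\alpha,\beta,\gamma$ match the subword consumed in $q*w'$ with a cyclic permutation of the one consumed in $d*w$''. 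That assertion is the conclusion, not an argument: the fact that the identity among relations following from (\ref{miar}) is basic pins down conjugating elements, but it does not by itself identify which non-empty word is canceled on each side, which is why the paper proves the $\zeta,\eta$ clause case by case inside Lemma \ref{mainLemma} rather than deriving it formally afterwards. As written, this step of your proposal would not go through; the fix is simply to invoke the second half of Lemma \ref{mainLemma} and Proposition \ref{puzo} as above.
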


\begin{proof} First we assume that the first part of the claim is true when $u, v$ and $w$ are reduced and we show that this part is true also when these words are non-necessarily reduced.

Let $d$ be a cyclic permutation of $u*v$. By (\ref{u*vNonRed}) of Proposition \ref{summar} we have that $u*v = \rho(u) * \rho(v)$, so $d$ is a cyclic permutation of $\rho(u) * \rho(v)$. Then there exist words $p, q', w'', f, h$ such that
	\begin{itemize}
		
		\item [--] either $p$ is the reduced form of a cyclic permutation of $\rho(u)$ and $q' \sim \rho(v)$ or $p$ is the reduced form of a cyclic permutation of $\rho(v)$ and $q' \sim \rho(u)$,
		
		\item [--] $w'' \sim \rho(w)$,
		
		\item [--] $f \sim q' * w''$
\end{itemize}
and such that $d*w'' \sim p' * (h f h^{-1})$, with $p' * (h f h^{-1}) = p' h f h^{-1}$ if $h \neq 1$.

By (\ref{cpCanc}) of Proposition \ref{summar} and Remark \ref{rfcp-rf} there exist words $q, w'$ such that either $p$ is the reduced form of a cyclic permutation of $u$ and $q \sim v$ or $p$ is the reduced form of a cyclic permutation of $v$ and $q \sim u$, $w' \sim w$ and $\rho(q) = \rho(q')$, $\rho(w') = \rho(w'')$. By (\ref{u*vNonRed}) of Proposition \ref{summar} we have that:
	\begin{itemize} 				
		\item [--] $q' * w'' = q * w'$, so $f \sim q * w'$,
		
		\item [--] $d * w'' = d * w'$		
	\end{itemize}
and these imply (\ref{d*w}). Since the identities among relations are the same if we take the reduced forms of the words, we obtain the claim and we can assume that $u, v$ and $w$ are reduced.

\smallskip

Now we prove the claim in the case where at least one between $u, v, w$ and $d$ is equal to 1.

If $u = 1$ then $d$ is a cyclic permutation of $v$ and we set $p := u = 1$, $q := d$, $w' := w$ and $f := q*w$. Then $d*w' = q*w = 1 *(q*w) = p*f$. The identity among relations that follows from this equivalence is basic. Finally since $d = q$, the cancellations in $d*w$ and $q*w$ are equal. If $v = 1$ then the same is true by setting $p := v$, $q := d$ and $f := q*w$. 

If $w = 1$ then we set $p := u$, $q := v$, $w' := w = 1$ and $f := q*w = v$, so $d*w = d$ and since $d \sim u*v$, then $d*w \sim p*f$. It is trivial to see that the identity among relations that follows from this equivalence is basic. Moreover there are no cancellations in the product $d*w$ since $w = 1$. 

Finally if $d = 1$ then $u = v^{-1}$ by (\ref{jarem}) of Proposition \ref{summar}, so the claim follows from Proposition \ref{corToMTh1}. Moreover there are no cancellations in the product $d*w$ since $d = 1$.

\smallskip

Now let us assume that $u, v, w, d \neq 1$. By Lemma \ref{mainLemma} (to be proved in the next sections) there exist words $p$, $q$, $w'$, $\alpha$, $\beta$ and $\gamma$ such that	
\begin{enumerate}[1)]		
	\item either $p \sim u$ and $q \sim v$ or $p \sim v$ and $q \sim u$,
	
	\item $w' \sim w$
	
	\item  either 
	\begin{equation} \label{miar2} \rho(\alpha q w' \alpha^{-1}) = \rho(\beta p^{-1} \beta^{-1} \gamma (d*w) \gamma^{-1})  \end{equation}	
	or 
	\begin{equation} \label{miar'2} \rho(\alpha w' q \alpha^{-1}) = \rho(\gamma (w*d) \gamma^{-1}\beta p^{-1} \beta^{-1})  \end{equation}		
\end{enumerate}	
and the identity among relations involving $p, q, w$ that by Remark \ref{idFrom} follows from either (\ref{miar2}) or (\ref{miar'2}) is basic. Moreover if $d = u*v$ or $d = v*u$ then $w' = w$; and if $d = uv$ or $d = vu$ then $w' = w$ and $q$ is equal to either $u$ or $v$.

If $p \sim u$ and $q \sim v$ we set $p_0 := u$ and $q_0 := v$ and $d$ is a cyclic permutation of $p_0 * q_0$; if $p \sim v$ and $q \sim u$ we set $p_0 := v$ and $q_0 := u$ and $d$ is a cyclic permutation of $q_0 * p_0$, that by Proposition \ref{puzo} is a cyclic permutation of $p_0 * q_0$. Thus in any case $d \sim p_0*q_0$ and by Lemma \ref{lemToMT} and Remark \ref{remlemToMT} there exist words $p', f, h$ such that $p'$ is the reduced form of a cyclic permutation of $p$, $f \sim q * w'$ and 
	$$d*w \sim p' * (h f h^{-1}),$$ 
with $p'*(h f h^{-1}) = p' h f h^{-1}$ if $h \neq 1$. Moreover the identity among relations involving $p, q, w$ that by Remark \ref{idFrom} follows from this equality is basic. 

We have that either $p'$ is the reduced form of a cyclic permutation of $u$ and $q \sim v$ or $p'$ is the reduced form of a cyclic permutation of $v$ and $q \sim u$. By renaming $p$ the word $p'$ the first part of the claim is proved.

\smallskip

Lemma \ref{mainLemma} also implies that if there is cancellation in the product $d*w$ then then there exist reduced words $\zeta, \eta$ not both empty such that if (\ref{miar2}) holds [respectively, if (\ref{miar'2}) holds] then in the product $d*w$ [respectively, $w*d$] the words $\zeta \zeta^{-1}$ and $\eta \eta^{-1}$ are canceled and in the product $q*w'$ [respectively, $w'*q$] the word $\zeta \eta^{-1} \eta \zeta^{-1}$ is canceled. 

Let $\eta=1$; then $\zeta \neq 1$. If (\ref{miar2}) holds then in $d*w$ and $q*w'$ the non-empty word $\zeta \zeta^{-1}$ is canceled, proving the claim. If (\ref{miar'2}) holds then in $w*d$ and $w'*q*$ the non-empty word $\zeta \zeta^{-1}$ is canceled, and by Proposition \ref{puzo} the same word is canceled in $d*w$ and $q*w'$.

Let $\eta \neq 1$. If (\ref{miar2}) holds then the word $\eta \eta^{-1}$ is canceled in $d*w$ and its cyclic permutation $\eta^{-1} \eta$ is canceled in $q*w'$. If (\ref{miar'2}) holds then the word $\eta \eta^{-1}$ is canceled in $w*d$ and its cyclic permutation $\eta^{-1} \eta$ is canceled in $w'*q$. By Proposition \ref{puzo} $\eta \eta^{-1}$ is canceled in $d*w$ and $\eta^{-1} \eta$ is canceled in $q*w$.
\end{proof}

\begin{remark} \label{samProdConjRel} \rm By Remark \ref{idFrom2}, the equivalence (\ref{d*w}) determines two products of conjugates of $u, v, w$ that are equal in the free group. The second part of Theorem \ref{mainTheor2} says that these two products are indeed equal, that is they have the same conjugating elements.
	
The second part of Theorem \ref{mainTheor2} is also a generalization to the cyclically reduced product $*$ of an obvious fact valid for the reduced product $\cdot$ . Indeed, since  $\cdot$ is associative, then for words $u, v, w$ we have that 
		$$(u \cdot v) \cdot w = u \cdot (v \cdot w).$$
The identity among relations following from this equality is the trivial identity
			$$u \centerdot v \centerdot w \equiv u \centerdot v \centerdot w.$$ 
For the cyclically reduced product a generalization of the associative property holds, and this is (\ref{d*w}). The second part of Theorem \ref{mainTheor2} says that there exist words $\alpha, \beta, \gamma$ such that the identity among relations following from (\ref{d*w}) is 		
			$$\alpha u \alpha^{-1} \centerdot \beta v \beta^{-1} \centerdot \gamma w \gamma^{-1} \equiv \alpha u \alpha^{-1} \centerdot \beta v \beta^{-1} \centerdot \gamma w \gamma^{-1}.$$ 	
\end{remark}

\begin{remark} \label{nonCancBoth} \rm The third part of Theorem \ref{mainTheor2} states that if there is non-trivial cancellation in the product $d*w$ then there is also non-trivial cancellation in the product $q*w'$.

The question arises whether if there is non-trivial cancellation in both the products $u*v$ and  $d*w$ then there is also non-trivial cancellation in both $q*w'$ and $p*f$.

The following example shows that this is not the case. Let $X = \{t, x, y, z\}$ and let $u = t x$, $v = x^{-1} y$, $w = y^{-1} x z$. Let $d := u*v$; then $d * w = u*(v*w)$, so the equivalence (\ref{d*w}) is verified trivially in that case. We have that $u * v = t y$ and therefore there is non-trivial cancellation in both $u*v$ and  $d*w$.

However $v * w = z$ and there is no cancellation in the cyclically reduced product between $u*v$ and $w$.
\end{remark}

\begin{remark} \label{nonHomeoVK} \rm By Remark \ref{samProdConjRel} the two products of conjugates of $u, v, w$ determined by the left and right hand side of (\ref{d*w}) are equal term by term. Now the question arises whether, in the case where $u, v$ and $w$ are relators of a group presentation, the van Kampen diagrams associated with $d*w$ and $p*(h f h^{-1})$ are homeomorphic\footnote{A short introduction to van Kampen diagrams is given in Section \ref{sectVKdiag}.}.

This is not the case: the example in Remark \ref{nonCancBoth} is a counter-example also in this case. Indeed let $X = \{t, x, y, z\}$ and let $u = t x$, $v = x^{-1} y$, $w = y^{-1} x z$; then $(u*v)*w = u*(v*w)$. The van Kampen diagrams associated with $(u*v)*w$ and $u*(v*w)$ (see Appendix \ref{sectVKdiag} and Remark \ref{pcrFrom}) are those of Figures \ref{fig:vKLeft} and \ref{fig:vKRight} respectively.

\begin{figure}[h]
	\begin{minipage}[b]{0.4\textwidth}
		\begin{picture}(180, 100) (103, 0)

		\put(163, 40){\oval(100, 50)[r]}
		
		\put(163, 40){\oval(49, 49)}
		
		\put(163, 15){\line(0, 1){50}}	
		
		\put(163, 15){\circle{5}}
		\put(163, 15){\circle*{3}}
		
		\put(163, 65){\circle*{3}}
		
		\put(213, 40){\circle*{3}}
		
		\put(132, 26){$t$}
		\put(138.5, 41){\vector(0, 1){4}}
		
		\put(155, 51){$x$}
		\put(163, 41){\vector(0, -1){4}}
		
		\put(189, 51){$y$}
		\put(187.5, 41){\vector(0, -1){4}}
		
		\put(170, 68){$x$}
		\put(182, 65){\vector(1, 0){4}}
		
		\put(213, 26){$z$}
		\put(209, 23){\vector(-1, -1){4}}

		\end{picture}
		\caption{van Kampen diagram for $(u*v)*w$}
		\label{fig:vKLeft}
	\end{minipage}
	\hfill
	\begin{minipage}[b]{0.4\textwidth}
		\begin{picture}(180, 100) (92, 0)

		\put(187, 40){\oval(60, 60)}

		\put(137, 40){\oval(40, 40)}
		
		\put(177, 40){\oval(40, 40)}
		
		\put(157, 40){\circle*{3}}
		\put(157,40){\circle{5}}
		
		\put(197, 40){\circle*{3}}
		\put(117, 40){\circle*{3}}
		
		\put(145, 12){$t$}
		\put(138, 20){\vector(-1,0){4}}
		
		\put(113, 52){$x$}
		\put(133, 60){\vector(1,0){4}}

		\put(156, 67){$z$}
		\put(179, 70){\vector(1,0){4}}
		
		\put(164, 48){$x^{-1}$}
		\put(175, 60){\vector(1,0){4}}
		
		\put(183, 27){$y$}
		\put(178, 20){\vector(-1,0){4}}

		\end{picture}
		\vspace*{-3mm}
		\caption{van Kampen diagram for $u*(v*w)$}
		\label{fig:vKRight}
	\end{minipage}
\end{figure}

These diagrams are not homeomorphic, in particular the one on the left is topologically a disk while the one on the right is not. However both have an internal edge labeled by $y$. The last part of Theorem \ref{mainTheor2} proves that this property is general: if there is cancellation between $d$ and $w$ on the left side of (\ref{d*w}), one word that is canceled in $d*w$ is also canceled in $q*w'$ in the right side of (\ref{d*w}).

This means that even if the diagrams associated with the two sides of (\ref{d*w}) are not homeomorphic, they have some internal paths that are homeomorphic.
\end{remark}

\begin{remark} \label{} \rm We now show that the results of Theorem \ref{mainTheor2} cannot be improved. The claims we make in this remark can be verified either by long and tedious manual verification or (as the author has done) with the help of a computer program (the author has written a little project in the Java language for that purpose).

In (\ref{d*w}) the word $h$ cannot be always empty. Indeed let $u := x^{2} y^{-1} x^{3} y^{-2} x$, $v := y^{-3} x^{-1} y^{-2}$, $w := x^{-2} y^{-1} x y^{2}$ and let $d = u * v$. Then for no $p, q, w', f$ as defined in Theorem \ref{mainTheor2} is $d*w$ a cyclic permutation of $p * f$. But $d*w = u y^{-2} (v*w) y^{2}$

This is also true for $u := x^{2} y^{-1} x^{3} y^{-2} x$, $v := y^{-3} x^{-1} y^{2}$, $w := x^{-2} y x^{-1} y$, $d := x^{-1} y^{2} x^{2} y^{-1} x^{3} y^{-2} x y^{-3}$. In that case moreover the equivalence in (\ref{d*w}) is never an equality and can be realized only when $q$ is a non-trivial cyclic permutation of $v$: it can be realized either for $p = u$ and $w' = w$, with $f$ a non-trivial cyclic permutation of $q*w'$ or for $p = u$, $f = q * w'$ and $w'$ a non-trivial cyclic permutation of $w$. By swapping $u$ and $v$ we obtain an example when $q$ must be a non-trivial cyclic permutation of $u$.

\smallskip

The following is an example when on the contrary $h$ must be 1: $u := x^{2} y^{-1} x^{3} y^{-2} x$, $v := y^{-3} x^{-1} y^{2}$, $w := x^{-2} y x^{-1} y$,
$d := x^{-1} y^{2} x^{2} y^{-1} x^{3} y^{-2} x y^{-3}$. In this case $q$ must be a non-trivial cyclic permutation of $v$ and either $w'$ be a non-trivial cyclic permutation of $w$ or $f$ be a non-trivial cyclic permutation of $q*w'$. In no case is $d*w$ equal to $p*f$. If we swap $u$ and $v$ we obtain an example when $q$ must be a non-trivial cyclic permutation of $u$.
\end{remark}

\section{The main lemma} \label{lemmaSect}

In order to prove Theorem \ref{mainTheor2} we need to prove the following result.

\begin{lemma} \label{mainLemma} Let $u, v, w$ be reduced non-empty words, let $d$ be a cyclic permutation of $u*v$ and let $d \neq 1$. Then there exist words $p$, $q$, $w'$, $\alpha$, $\beta$ and $\gamma$ such that	
	
\begin{enumerate}[a)]		
	\item either $p \sim u$ and $q \sim v$ or $p \sim v$ and $q \sim u$,
	
	\item $w' \sim w$
	
	\item  either 
		\begin{equation} \label{miar} \rho(\alpha q w' \alpha^{-1}) = \rho(\beta p^{-1} \beta^{-1} \gamma (d*w) \gamma^{-1})  \end{equation}	
	or 
		\begin{equation} \label{miar'} \rho(\alpha w' q \alpha^{-1}) = \rho(\gamma (w*d) \gamma^{-1}\beta p^{-1} \beta^{-1})  \end{equation}		
\end{enumerate}	
and the identity among relations involving $p, q, w, p^{-1}, q^{-1}, w^{-1}$ that by Remark \ref{idFrom} follows from either (\ref{miar}) or (\ref{miar'}) is basic. Moreover if $d = u*v$ or $d = v*u$ then $w' = w$; and if $d = uv$ or $d = vu$ then $w' = w$ and $q$ is equal to either $u$ or $v$.
	
If there are cancellation in the product $d*w$ then there exist reduced words $\zeta, \eta$ not both empty such that if (\ref{miar}) holds [respectively, if (\ref{miar'}) holds] then in the product $d*w$ [respectively, $w*d$] the words $\zeta \zeta^{-1}$ and $\eta \eta^{-1}$ are canceled and in the product $q*w'$ [respectively, $w'*q$] the word $\zeta \eta^{-1} \eta \zeta^{-1}$ is canceled. 

In particular if $\eta = 1$ then the non-empty word $\zeta \zeta^{-1}$ is canceled in both $d*w$ and $q*w'$ [respectively, $w*d$ and $w'*q$]; if $\zeta = 1$ then in the product $d*w$ [respectively, $w*d$] the words $\eta \eta^{-1}$ is canceled and in the product $q*w'$ [respectively, $w'*q$] the word $\eta^{-1} \eta$ is canceled.
\end{lemma}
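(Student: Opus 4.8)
The plan is to reduce everything to the structural descriptions of the cyclically reduced product already available, namely Lemma \ref{shirv} (for a product of two reduced words) and Lemma \ref{shirv4} (for a cyclic permutation of such a product), and then to carry out the necessary Levi's-Lemma bookkeeping. First note that $d \sim u*v$ and $d \neq 1$ force $u*v \neq 1$, so by (\ref{jarem}) of Proposition \ref{summar} we have $\rho(u) \neq \rho(v^{-1})$, and since $u, v$ are reduced, $u \neq v^{-1}$; hence Lemma \ref{shirv4} applies to $u$, $v$ and the given cyclic permutation $d$ of $u*v$. It produces a pair $p, q$ with one of them a cyclic permutation of $u$ and the other of $v$ — which is exactly part a) — together with an explicit decomposition: either case (a), $q = p^{-1} r c_1 c_2 r^{-1}$, $d = c_2 c_1$, $p*q = c_1 c_2 = u*v$ (and $c_2 = 1$ if $d = u*v$); or case (b), $p = e_2 b$, $q = b^{-1} e_3 e_1$, $d = e_1 e_2 e_3$ with $e_2 \neq 1 \neq e_3 e_1$ (and $e_1 = 1$ if $d = u*v$), where moreover $p*q = u*v$ or $q*p = u*v$. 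The cases $d = uv$ and $d = vu$, in which there is no cancellation in the original product, are the degenerate instances of these in which the decomposition collapses so that $q$ may be taken literally equal to $u$ or $v$ and $w' = w$.

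In each case the remaining work is to compute $\hat\rho(dw)$ (for (\ref{miar})) or $\hat\rho(wd)$ (for (\ref{miar'})) and to exhibit a cyclic permutation $w'$ of $w$ and conjugating words $\alpha, \beta, \gamma$ realizing the asserted equality. Here I would apply Lemma \ref{shirv} to $d$ and $w$ — or, depending on the subcase, to a suitable factor of $d$ together with $w$ — which describes $d*w$ and $\rho(dw)$ in terms of $d$ and $w$; combining this with the decomposition of $d$ coming from Lemma \ref{shirv4} and with the tautological identity $\rho(q w') = \rho(\alpha_0 (q*w') \alpha_0^{-1})$ turns the target into a word equation of the shape $\rho(q w') = (\text{explicit word in } p^{-1},\ d*w,\ \text{and conjugators})$, to be analyzed by Levi's Lemma (Remark \ref{LeviLemma}) — exactly as in the sample computation (\ref{wordEq}) for case 1A1. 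The guiding principle for choosing $w'$ is to make the seam of $d$ (the point $c_2 \mid c_1$ in case (a), or the point corresponding to $b$ inside $e_1 e_2 e_3$ in case (b)) line up with the copy of $w$ sitting inside $q$; this alignment is trivial precisely when $c_2 = 1$, resp.\ $e_1 = 1$, which is why $w' = w$ exactly in the special cases $d = u*v$, $d = v*u$. Once the word equation is solved one reads off that both sides of (\ref{miar}) (resp.\ (\ref{miar'})), expanded as products of three conjugates using $d \sim u*v$ to expand $d*w$, agree term by term — which is precisely the statement that the induced identity among relations is basic — and one reads off the cancellation data: the words $\zeta$ and $\eta$ are the two blocks of cancellation in $d*w$, one traceable to the $p^{-1}$-part and one internal to $q$, so that $\zeta\zeta^{-1}$ and $\eta\eta^{-1}$ cancel in $d*w$ while $\zeta\eta^{-1}\eta\zeta^{-1}$ cancels in $q*w'$, Proposition \ref{puzo} being used to transfer cancellation information between $d*w$ and $w*d$, and between $q*w'$ and $w'*q$.

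The main obstacle is the combinatorial explosion. Each application of Lemma \ref{shirv} splits into three cases, and the cancellation in the product with $w$ may eat into the pieces $c_1, c_2, r$ (in case (a)) or $e_1, e_2, e_3, b$ (in case (b)) from either end, to various depths, possibly overlapping in the middle; every such configuration produces a different explicit word equation, a different choice of $w', \alpha, \beta, \gamma$, and must be separately checked to satisfy b) and c), the basic-identity condition, and the $\zeta, \eta$ clause. This is what forces the proof into the roughly $43$ subcases spread over Sections \ref{lemmaSect}--\ref{prLe4}; the conceptual content of each subcase is modest, but the bookkeeping is heavy. A secondary delicate point is the boundary behaviour: one must verify that the seam-alignment choice of $w'$ is the trivial permutation exactly in the cases $d = u*v$, $d = v*u$ (so that $w' = w$), and that, when moreover $d = uv$, $d = vu$, the pair $p, q$ can be taken with $q \in \{u, v\}$; ensuring this requires carrying the refinements ``if $d = u*v$ then $c_2 = 1$ / $e_1 = 1$'' of Lemma \ref{shirv4} all the way through the argument so that no spurious cyclic permutation is introduced.
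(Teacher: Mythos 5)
Your plan coincides with the paper's own strategy: since $d\neq 1$ forces $u\neq v^{-1}$, the cancellation between $u$ and $v$ is described by Lemma \ref{shirv4}, the cancellation between $d$ and $w$ by Lemma \ref{shirv}, and each resulting configuration is settled by a Levi's-Lemma analysis of an explicit word equation from which $w'$, $\alpha$, $\beta$, $\gamma$, the basic identity and the canceled words $\zeta,\eta$ are read off; this is precisely the architecture of Sections \ref{prLe1}--\ref{prLe4} (with the cosmetic difference that when $u*v=uv$ the paper does not invoke Lemma \ref{shirv4} but works directly with the four shapes $uv$, $u_2vu_1$, $vu$, $v_2uv_1$ of $d$ --- note these are not only $uv$ and $vu$, so your description of the no-cancellation situation as the two ``degenerate instances'' $d=uv$, $d=vu$ is too narrow, although this does not affect the logic).

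The genuine gap is that the proposal stops at the plan. The entire mathematical content of the lemma is the verification, configuration by configuration, that one can actually choose $w'$, $\alpha$, $\beta$, $\gamma$ so that the two sides of (\ref{miar}) or (\ref{miar'}), expanded as products of conjugates of $p,q,w$, coincide term by term (this is what ``the identity among relations is basic'' amounts to, by Remark \ref{remDefBasIAR}), and that the canceled words $\zeta,\eta$ with the stated properties exist. You assert this (``once the word equation is solved one reads off that both sides agree term by term'') but do not carry out a single subcase, and there is no a priori reason why your seam-alignment choice of $w'$ always makes the conjugating elements match: proving exactly this is what occupies the 43 explicit computations of Sections \ref{prLe1}--\ref{prLe4} (compare, e.g., the manipulations around (\ref{ML3.2B3}), (\ref{ML4.1A1a}) or (\ref{ML4.3A1}), where the correct cyclic permutations of $q$ and $w$ and the correct conjugators are far from automatic). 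A smaller inaccuracy: your claim that $w'=w$ ``exactly'' when $d=u*v$ or $d=v*u$ is an overstatement --- the paper obtains $w'=w$ in many generic subcases as well --- though only the ``if'' direction is needed, so this is harmless. In sum, the approach is the right one and matches the paper's, but as written the proposal is an outline of the proof rather than a proof.
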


\begin{proof} We need to prove the claims in the following four cases:	
\begin{enumerate}
	\item there is no cancellation in $u*v$ and $d*w$;
	
	\item there is cancellation in $u*v$ but not in $d*w$;
	
	\item there is no cancellation in $u*v$ but there is in $d*w$;
	
	\item there is cancellation in both $u*v$ and $d*w$.
	
\end{enumerate}
These four cases will be proved in Sections \ref{prLe1}, \ref{prLe2}, \ref{prLe3} and \ref{prLe4} respectively.	
\end{proof}

\section{Proof of Lemma \ref{mainLemma}: first case} \label{prLe1}

In this section we prove Lemma \ref{mainLemma} under the hypothesis that the cyclically reduced products of $u$ by $v$ and of $d$ by $w$ are without cancellations, that is $u*v = uv$ and $d*w = dw$.

Since $d$ is a cyclic permutation of $uv$, four cases are possible: A) $d = uv$; B) $d = u_2 v u_1$ with $u = u_1 u_2$ and $u_1, u_2 \neq 1$; C) $d = vu$; D) $d = v_2 u v_1$ with $v = v_1 v_2$ and $v_1, v_2 \neq 1$. We observe that in cases B and D we have that $d \neq u*v$.

Since the claim of Lemma \ref{mainLemma} is symmetrical in $u$ and $v$ we need only to consider cases A and B because C and D follow from the latter by swapping $u$ and $v$.

\medskip

A) $d = uv$. This implies that $d*w = uvw$. We have that 
	$$\rho(vw) = \rho(u^{-1} uvw) = \rho(u^{-1} (d*w))$$
and the identity among relations that follows from it is 
	$$v \centerdot w \equiv u^{-1} \centerdot u \centerdot v \centerdot w,$$
which is basic.

The first part of the claim follows by setting $q := v$, $w' := w$, $p := u$. The second part does not apply because there are no cancellations in the product $d*w$.

\medskip

B) $d = u_2 v u_1$ with $u = u_1 u_2$ and $u_1, u_2 \neq 1$. This implies that $d = \rho(u_2 v u_1 u_2 u_2^{-1}) = \rho(u_2 v u u_2^{-1})$ and thus $d*w = \rho(u_2 v u u_2^{-1} w)$.

Let us set $u' := u_2 u_1 = \rho(u_2 u u_2^{-1})$. We have that $u'$ is a cyclic permutation of $u$ and we have that 
	$$\rho(u' w) = \rho(u_2 u u_2^{-1} w) = \rho(u_2 v^{-1} u_{2}^{-1} u_{2} v u u_{2}^{-1} w) = \rho(u_2 v^{-1} u_{2}^{-1} (d*w))$$
and the identity among relations that follows from it is
	$$u_2 u u_2^{-1} \centerdot w \equiv u_2 v^{-1} u_{2}^{-1} \centerdot
	u_2 v u_2^{-1}  \centerdot u_2 u u_2^{-1} \centerdot w,$$
which is basic.

The first part of the claim follows by setting $q := u'$, $w' := w$, $p := v$. The second part does not apply because there are no cancellations in the product $d*w$.

\section{Proof of Lemma \ref{mainLemma}: second case}	\label{prLe2}

In this section we suppose that there are cancellations in the cyclically reduced product of $u$ by $v$ but that the cyclically reduced product of $d$ by $w$ is without cancellations, that is $d*w = dw$.

By Lemma \ref{shirv4} there exists a pair of words $p$ and $q_0$ such that either $p \sim u$ and $q_0 \sim v$ or $p \sim v$ and $q_0 \sim u$ and such that one of the following two cases holds: 
\begin{enumerate} [(A)]
	\item $q_0 = p^{-1} r c_1 c_2 r^{-1}$ for words $r, c_1 c_2$ such that $p*q_0 = c_1 c_2$ and $d = c_2 c_1$; 
	
	\item $p = e_2 b$ and $q_0 = b^{-1} e_3 e_1$ for words $b, e_1, e_2, e_3$ such that $d = e_1 e_2 e_3$ and $e_2, e_3 e_1 \neq 1$. 	
\end{enumerate}

A. We have that $d*w = c_2 c_1 w$. The word $q := c_2 r^{-1} p^{-1} r c_1$ is a cyclic permutation of $q_0$ and we have that $\rho(qw) = \rho(c_2 r^{-1} p^{-1} r c_1 w)$, so 
		$$\rho(c_2^{-1} qw c_2) = \rho(r^{-1} p^{-1} r c_1 w c_2).$$
Moreover we have that $c_1 w c_2 = \rho(c_2^{-1} (d*w) c_2)$, so we have the equality
		\begin{equation} \label{ML2A} \rho(c_2^{-1} qw c_2) = \rho(r^{-1} p^{-1} r c_2^{-1} (d*w) c_2). \end{equation}
In order to prove the first part of the lemma we need to prove that the identity among relations involving $p, q, w$ that follows from it is basic. We have that 
		$$\rho(p q_0) = \rho(p p^{-1} r c_1 c_2 r^{-1}) = \rho(r c_1 c_2 r^{-1}),$$
so $\rho(c_2 r^{-1} p q_0 r  c_2^{-1}) = \rho(c_2 c_1) = d$ and since $q_0 = \rho(r c_2^{-1} q  c_2 r^{-1})$ we have that $d = \rho(c_2 r^{-1} p r c_2^{-1} q)$, therefore $d*w = \rho(c_2 r^{-1} p r c_2^{-1} q w)$ and $\rho(c_2^{-1} (d*w) c_2) = \rho(r^{-1} p r c_2^{-1} q w c_2)$ and the identity involving $p, q, w$ that follows from (\ref{ML2A}) is
		$$c_2^{-1} q c_2 \centerdot c_2^{-1} w c_2 \equiv r^{-1} p^{-1} r \centerdot r^{-1} p r \centerdot c_2^{-1} q c_2 \centerdot c_2^{-1} w c_2,$$
which is basic. The second part of the lemma does not apply here because there are no cancellations in the product $d*w$.

\medskip


B. We have that $d*w = e_1 e_2 e_3 w$. The word $q := e_1 b^{-1} e_3$ is a cyclic permutation of $q_0$ and we have that
	\begin{equation} \label{ML2B} \rho(q w) = \rho(e_1 b^{-1} e_3 w) = \rho(e_1 b^{-1} e_2^{-1} e_1^{-1} e_1 e_2 e_3 w) = \rho(e_1 p^{-1} e_1^{-1} (d*w)).	\end{equation}
In order to prove the first part of the lemma we need to prove that the identity among relations involving $p, q, w$ that follows from it is basic. We have that 
	$$\rho(p q_0) = \rho(e_2 b b^{-1} e_3 e_1) = \rho(e_2 e_3 e_1),$$	
so $\rho(e_1 p q_0 e_1^{-1}) = \rho(e_1 e_2 e_3) = d$ and since $q_0 = \rho(e_1^{-1} q e_1)$ we have that $d = \rho(e_1 p e_1^{-1} q e_1 e_1^{-1}) = \rho(e_1 p e_1^{-1} q)$, therefore $d*w = \rho(e_1 p e_1^{-1} q w)$ and the identity involving $p, q, w$ that follows from (\ref{ML2B}) is
	$$ q \centerdot w  \equiv e_1 p^{-1} e_1^{-1} \centerdot e_1 p e_1^{-1} \centerdot q \centerdot w,$$
which is basic. The second part of the lemma  does not apply here because there are no cancellations in the product $d*w$.

\section{Proof of Lemma \ref{mainLemma}: third case}	\label{prLe3}

Let $u, v, w$ be reduced non-empty words and let $d$ be a cyclic permutation of $u*v$. In this section we suppose that the cyclically reduced product of $u$ by $v$ is without cancellations, that is $u*v = uv$ but that there are cancellations in the cyclically reduced product of $d$ by $w$.

We split the proof of Lemma \ref{mainLemma} in different subcases depending on the form of $d$. Indeed by Lemma \ref{shirv} three cases are possible:

\begin{enumerate}[1.]
	\item $d = d_1 a$, $w = a^{-1} s (d*w) s^{-1} d_1^{-1}$, $\rho(dw) = d_1 s (d*w) s^{-1} d_1^{-1}$ for words $d_1, a, s$;	
	
	\item $d = t d_1 a$, $w = a^{-1} w_1 t^{-1}$, $\rho(dw) = t d_1 w_1 t^{-1}$ for words $t, d_1, a, w_1$ such that $d_1, w_1, ta \neq 1$, $d*w = d_1 w_1$ and $w*d = w_1 d_1$;  
	
	\item $d = w_1^{-1} s (d*w) s^{-1} a$, $w = a^{-1} w_1$, $\rho(dw) = w_1^{-1} s (d*w) s^{-1} w_1$ for words $w_1, s, a$.
\end{enumerate}

Since $d$ is a cyclic permutation of $uv$ then four cases are possible: \begin{enumerate}[A.]
	\item $d = uv$;
	
	\item $d = u_2 v u_1$, with $u_1, u_2 \neq 1$ and $u = u_1 u_2$;
	
	\item $d = vu$;
	
	\item $d = v_2 u v_1$ with $v = v_1 v_2$ and $v_1, v_2 \neq 1$.
\end{enumerate}

In cases B and D we have that $d \neq u*v$. We observe that we need only to treat cases A and B because C and D follow from the latter by swapping $u$ and $v$ since the claim is symmetrical in $u$ and $v$. So we will prove Lemma \ref{mainLemma} in subcases 1A, 1B, 2A, 2B, 3A, 3B.

\medskip

\textbf{1A)} $d = d_1 a = uv$, $w = a^{-1} s (d*w) s^{-1} d_1^{-1}$. The word equation $d_1 a = uv$ leads to two different possible solutions.

\medskip

\textbf{1A1)} \hspace{0.1cm}
\begin{tabular}{|c|c|}
	\hline
	\rule{0pt}{2.3ex}
	$\,\,\, u \,\,$ & $v \hspace{0.02cm}$ \\  
	\hline
\end{tabular}

\hspace{1.175cm}
\begin{tabular}{|c|c|}
	\hline 
	\rule{0pt}{2.3ex}
	$d_1$ & $\,\, a \,$ \\
	\hline
\end{tabular}	

\medskip

There exists a word $x$ such that $u = d_1 x$ and $a = x v$, so $d = d_1 x v$, thus $a^{-1} = v^{-1} x^{-1}$ and therefore $w = v^{-1} x^{-1} s (d*w) s^{-1} d_1^{-1}$.

The word $u_0 := x d_1$ is a cyclic permutation of $u$ and $u_0^{-1} = d_1^{-1} x^{-1}$. We have that 	
	$$\rho(vw) = \rho(v v^{-1} x^{-1} s (d*w) s^{-1} d_1^{-1}) = \rho(x^{-1} s (d*w) s^{-1} d_1^{-1}),$$
so
	\begin{equation} \label{ML3.1A1} \rho(d_1^{-1} v w d_1) = \rho(d_1^{-1} x^{-1} s (d*w) s^{-1}) = \rho(u_0^{-1} s (d*w) s^{-1}).\end{equation}
We have that 
	$$\rho(uvw) = \rho(dw) = \rho(d_1 x v v^{-1} x^{-1} s (d*w) s^{-1} d_1^{-1}) = \rho(d_1 s (d*w) s^{-1} d_1^{-1})$$
and therefore $\rho(s (d*w) s^{-1}) = \rho(d_1^{-1} u v w d_1)$. Moreover $u_0 = \rho(d_1^{-1} u d_1)$ and that implies that the identity involving $u, v, w$ that follows from (\ref{ML3.1A1}) is 
	$$d_1^{-1} v d_1 \centerdot d_1^{-1} w d_1  \equiv d_1^{-1} u^{-1} d_1 \centerdot d_1^{-1} u d_1 \centerdot d_1^{-1} v d_1 \centerdot d_1^{-1} w d_1,$$
which is basic. By setting $q := v$, $w' : = w$, $p := u_0$ this proves the first part of the lemma. The second part follows by observing that the non-empty word $v v^{-1}$ is canceled in $d*w$ and $q*w'$.

\medskip

\textbf{1A2)} \hspace{0.1cm}
\begin{tabular}{|c|c|}
	\hline
	\rule{0pt}{2.3ex}
	$u$ & $\,\,\, v  \,\,\,\hspace{0.02cm}$ \\  
	\hline
\end{tabular}

\hspace{1.175cm}
\begin{tabular}{|c|c|}
	\hline 
	\rule{0pt}{2.3ex}
	$\, d_1 \,$ & $\,a \,$ \\
	\hline
\end{tabular}

\medskip

There exists a word $x$ such that $d_1 = u x$, $v = x a$. Thus $d_1^{-1} = x^{-1} u^{-1}$ and therefore $w = a^{-1} s (d*w) s^{-1} x^{-1} u^{-1}$.

Since $\rho(x^{-1} v) = a$, we have that 
	$$\rho(wd) = \rho(wuv) = \rho(a^{-1} s (d*w) s^{-1} x^{-1} u^{-1} u v) =$$ $$\rho(a^{-1} s (d*w) s^{-1} x^{-1} v) = \rho(a^{-1} s (d*w) s^{-1} a).$$
Since $w*d = \hat{\rho}(wd)$, there exists a word $\alpha$ such that $\rho(wd) = \alpha (w*d) \alpha^{-1}$, so the previous equation implies that
	\begin{equation} \label{ML3.1A2pre} \alpha (w*d) \alpha^{-1} = \rho(a^{-1} s (d*w) s^{-1} a) = \rho(wuv).\end{equation}
Since $v^{-1} = a^{-1} x^{-1}$, in view of (\ref{ML3.1A2pre}) we have that
	\begin{multline} \label{ML3.1A2} \rho(wu) = \rho(a^{-1} s (d*w) s^{-1} x^{-1} u^{-1} u) = \rho(a^{-1} s (d*w) s^{-1} x^{-1}) = \\ \rho(a^{-1} s (d*w) s^{-1} a a^{-1} x^{-1}) = \rho(\alpha (w*d) \alpha^{-1} v^{-1})\end{multline}
and by (\ref{ML3.1A2pre}) the identity among relations that follows from (\ref{ML3.1A2}) is 
	$$w \centerdot u  \equiv w \centerdot u \centerdot v \centerdot v^{-1},$$
which is basic. By setting $q := u$, $w' : = w$, $p := v$ this proves the first part of the lemma. The second part follows by observing that the non-empty word $u^{-1} u$ is canceled in $w*d$ and $w'*q$.

\bigskip

\textbf{1B)} $d = d_1 a = u_2 v u_1$, $w = a^{-1} s (d*w) s^{-1} d_1^{-1}$ with $u_1, u_2 \neq 1$. The word equation $d_1 a = u_2 v u_1$ leads to two different possible solutions.

\medskip

\textbf{1B1)} \hspace{0.1cm}
\begin{tabular}{|c|c|}
	\hline
	\rule{0pt}{2.3ex}
	$u_2  v$ & $u_1$ \\  
	\hline
\end{tabular}

\hspace{11.6mm}
\begin{tabular}{|c|c|}
	\hline 
	\rule{0pt}{2.3ex}
	$d_1$ & $\,\,\, a \,\,\,\hspace{0.02cm}$ \\
	\hline
\end{tabular}	

\medskip

There exists a word $x$ such that $u_2 v = d_1 x$ and $a = x u_1$. Thus $d = d_1 x u_1$, $a^{-1} = u_1^{-1} x^{-1}$ and therefore $w = u_1^{-1} x^{-1} s (d*w) s^{-1} d_1^{-1}$. Also $uv = u_1 u_2 v = u_1 d_1 x$.

First we assume that $d_1 \neq 1$. Then since $u_1 \neq 1$ the first letter of $d$ is the first of $d_1$ and the last letter of $d$ is the last of $u_1$. Since $d$ is cyclically reduced these letters are not inverse one of the other. But these two letters are the first and last one of $w^{-1}$, which is cyclically reduced, therefore $w$ is cyclically reduced.

The word $w' := x^{-1} s (d*w) s^{-1} d_1^{-1} u_1^{-1}$ is a cyclic permutation of $w$ and is reduced. Let us set $\overline{a} := x$ and $\overline{d_1} := u_1 d_1$, so $uv = u_1 d_1 x = \overline{d_1} \overline{a}$ and $w' = \overline{a}^{-1} s (d*w) s^{-1} \overline{d_1}^{-1}$. By changing $w$ with $w'$ this case reduces to 1A.

\smallskip

Now let $d_1 = 1$. Then $d = x u_1$, $u_2 v = x$, which implies that $v^{-1} = \rho(x^{-1} u_2)$. Also we have that $w = u_1^{-1} x^{-1} s (d*w) s^{-1}$. 

We have that 
	$$\rho(d w) = \rho(x u_1 u_1^{-1} x^{-1} s (d*w) s^{-1}) = \rho(s (d*w) s^{-1}),$$
thus $s (d*w) s^{-1} = \rho(d w)$. Since $d = \rho(u_2 v u_2^{-1} u_2 u_1) = \rho(u_2 v u_2^{-1} u_2 u u_2^{-1})$ then 
	\begin{equation} \label{ML3.1B} s (d*w) s^{-1} = \rho(u_2 v u_2^{-1} u_2 u u_2^{-1} w).\end{equation}

The word $u_0 := u_2 u_1$ is a cyclic permutation of $u$ and we have that 
	\begin{multline} \label{ML3.1B1} \rho(u_0 w) = \rho(u_2 u_1 u_1^{-1} x^{-1} s (d*w) s^{-1}) = \rho(u_2 x^{-1} s (d*w) s^{-1}) = \\ \rho(u_2 x^{-1} u_2 u_2^{-1} s (d*w) s^{-1}) = \rho(u_2 v^{-1} u_2^{-1} s (d*w) s^{-1}).\end{multline}
Since $u_0 = \rho(u_2 u u_2^{-1})$ and in view of (\ref{ML3.1B}), the identity among relations involving $u, v, w$ that follows from (\ref{ML3.1B1}) is 
	$$u_2 u u_2^{-1} \centerdot w \equiv u_2 v^{-1} u_2^{-1} \centerdot u_2 v u_2^{-1} \centerdot u_2 u u_2^{-1} \centerdot w,$$
which is basic. By setting $q := u_0$, $w' : = w$, $p := v$ this proves the first part of the lemma. The second part follows by observing that the non-empty word $u_1 u_1^{-1}$ is canceled in $d*w$ and $q*w'$.


\bigskip

\textbf{1B2)} \hspace{0.1cm}
\begin{tabular}{|c|c|}
	\hline
	\rule{0pt}{2.3ex}
	$u_2  v$ & $u_1$ \\  
	\hline
\end{tabular}

\hspace{11.6mm}
\begin{tabular}{|c|c|}
	\hline 
	\rule{0pt}{2.3ex}
	$\,\,\, d_1 \,\,\,\hspace{0.02cm}$ & $a$ \\
	\hline
\end{tabular}	

\medskip

There exists a word $x$ such that $d_1 = u_2 v x$ and $u_1 = x a$. Thus $d_1^{-1} = x^{-1} v^{-1} u_2^{-1}$ and therefore $w = a^{-1} s (d*w) s^{-1} x^{-1} v^{-1} u_2^{-1}$. Also $u = u_1 u_2 = x a u_2$.

First we assume that $a \neq 1$. Then since $u_2 \neq 1$ the first letter of $d$ is the first of $u_2$ and the last letter of $d$ is the last of $a$. Since $d$ is cyclically reduced these letters are not inverse one of the other. But these two letters are the first and last one of $w^{-1}$, which is cyclically reduced, therefore $w$ is cyclically reduced.

The word $w' := v^{-1} u_2^{-1} a^{-1} s (d*w) s^{-1} x^{-1}$ is a cyclic permutation of $w$ and is reduced. Let us set $\overline{x} := a u_2$ and $\overline{d_1} := x$, so $u = x a u_2 = \overline{d_1} \, \overline{x}$ and $w' = v^{-1} \overline{x}^{-1} s (d*w) s^{-1} \overline{d_1}^{-1}$. 

By changing $w$ with $w'$ this case reduces to case 1A1.

\smallskip

Now let $a = 1$. Then $u_1 = x$ and $w = s (d*w) s^{-1} u_1^{-1} v^{-1} u_2^{-1}$. We have that 
	$$\rho(w d) = \rho(s (d*w) s^{-1} u_1^{-1} v^{-1} u_2^{-1} u_2 v u_1) = \rho(s (d*w) s^{-1}),$$
which implies that $w*d = d*w$, thus $w = s (w*d) s^{-1} u_1^{-1} v^{-1} u_2^{-1}$.

Since $d = \rho(u_2 u_1 u_1^{-1} v u_1) = \rho(u_1^{-1} u u_1 u_1^{-1} v u_1)$ then 
	\begin{equation} \label{ML3.1B'} s (w*d) s^{-1} = \rho(w u_1^{-1} u u_1 u_1^{-1} v u_1).\end{equation}
The word $u_0 := u_2 u_1$ is a cyclic permutation of $u$ and we have that 
	\begin{equation} \label{ML3.1B2} \rho(w u_0) = \rho(s (w*d) s^{-1} u_1^{-1} v^{-1} u_2^{-1} u_2 u_1) = \rho(s (w*d) s^{-1} u_1^{-1} v^{-1} u_1).\end{equation}
Since $u_0 = \rho(u_1^{-1} u u_1)$ and in view of (\ref{ML3.1B'}), the identity among relations involving $u, v, w$ that follows from (\ref{ML3.1B2}) is 
	$$w \centerdot u_1^{-1} u u_1 \equiv w \centerdot u_1^{-1} u u_1 \centerdot u_1^{-1} v u_1 \centerdot u_1^{-1} v^{-1} u_1,$$
which is basic. By setting $q := u_0$, $w' : = w$, $p := v$ this proves the first part of the lemma. The second part follows by observing that the non-empty word $u_2^{-1} u_2$ is canceled in $w*d$ and $w'*q$.

\bigskip

\textbf{2A)} $d = t d_1 a = uv$, $w = a^{-1} w_1 t^{-1}$, $d*w = d_1 w_1$, $w*d = w_1 d_1$ with $d_1, w_1, ta \neq 1$, so we have 
	$$\rho(uvw) = \rho(t d_1 a a^{-1} w_1 t^{-1}) = \rho(t d_1 w_1 t^{-1})$$
and then
	\begin{equation} \label{3.2A} d*w = d_1 w_1 = \rho(t^{-1} u v w t). 	\end{equation}

The word equation $t d_1 a = uv$ leads to two different possible solutions.
 
\medskip

\textbf{2A1)} \hspace{0.1cm}
\begin{tabular}{|c|c|}
	\hline
	\rule{0pt}{2.3ex}
	$\,\,\,\, u  \,\,\,\,$ & $v \hspace{0.2mm}$ \\  
	\hline
\end{tabular}

\hspace{1.175cm}
\begin{tabular}{|c|c|}
	\hline 
	\rule{0pt}{2.3ex}
	$t d_1$ & $\,\, a \,\,$ \\
	\hline
\end{tabular}	

\medskip

There exists a word $x$ such that $u = t d_1 x$ and $a = x v$. Thus $a^{-1} = v^{-1} x^{-1}$ and therefore $w = v^{-1} x^{-1} w_1 t^{-1}$. Moreover $d = t d_1 x v$.	

The word $u_0 := d_1 x t$ is a cyclic permutation of $u$ and $u_0^{-1} = t^{-1} x^{-1} d_1^{-1}$. We have that 
	$$\rho(vw) = \rho(v v^{-1} x^{-1} w_1 t^{-1}) = \rho(x^{-1} w_1 t^{-1})$$
so 
	$$\rho(t^{-1} v w t) = \rho(t^{-1} x^{-1} w_1) = \rho(t^{-1} x^{-1} d_1^{-1} d_1 w_1) = \rho(u_0^{-1} (d*w))$$
and since $u_0 = \rho(t^{-1} u t)$ we have that
	\begin{equation} \label{ML3.2A1} \rho(t^{-1} v w t) = \rho(t^{-1} u^{-1} t (d*w)).\end{equation}
We have that 
	$$\rho(dw) = \rho(t d_1 x v v^{-1} x^{-1} w_1 t^{-1}) = \rho(t d_1 w_1 t^{-1}) = \rho(t (d*w) t^{-1}),$$
therefore since $d = uv$ then $d*w = \rho(t^{-1} u v w t)$, which implies that the identity involving $u, v, w$ that follows from (\ref{ML3.2A1}) is 
	$$t^{-1} v t \centerdot t^{-1} w t \equiv t^{-1} u^{-1} t \centerdot t^{-1} u t \centerdot t^{-1} v t \centerdot t^{-1} w t,$$
which is basic. By setting $q := v$, $w' : = w$, $p := u$ this proves the first part of the lemma.

The second part follows by observing that the non-empty word $v v^{-1}$ is canceled in $d*w$ and $q*w$.

\bigskip

\textbf{2A2)} 
\hspace{0.1cm}
\begin{tabular}{|c|c|}
	\hline
	\rule{0pt}{2.3ex}
	$\, u \,$ & $\,\, v \,\, \hspace{0.3mm}$ \\  
	\hline
\end{tabular}

\hspace{1.175cm}
\begin{tabular}{|c|c|}
	\hline 
	\rule{0pt}{2.3ex}
	$t d_1$ & $\,a \,$ \\
	\hline
\end{tabular}	

\medskip

There exists a word $v_1$ such that $t d_1 = u v_1$ and $v = v_1 a$. We have that $\rho(t d_1 v_1^{-1}) = u$, so $\rho(d_1 v_1^{-1} t) = \rho(t^{-1} u t)$ and $\rho(t^{-1} v_1 d_1^{-1}) = \rho(t^{-1} u^{-1} t)$

First suppose that $a \neq 1$; then we have that
	$$\rho(vw) = \rho(v_1 a a^{-1} w_1 t^{-1}) = \rho(v_1 w_1 t^{-1}) =$$ 
	$$\rho(v_1 d_1^{-1} d_1 w_1 t^{-1}) = \rho(v_1 d_1^{-1} (d*w) t^{-1}),$$
so
	\begin{equation} \label{ML3.2A2a} \rho(t^{-1} v w t) = \rho(t^{-1} v_1 d_1^{-1} (d*w)) = \rho(t^{-1} u^{-1} t (d*w)).\end{equation}
By (\ref{3.2A}) the identity involving $u, v, w$ that follows from (\ref{ML3.2A2a}) is
	$$t^{-1} v t \centerdot t^{-1} w t \equiv t^{-1} u^{-1} t \centerdot t^{-1} u t \centerdot t^{-1} v t \centerdot t^{-1} w t,$$
which is basic. By setting $q := v$, $w' : = w$, $p := u$ this proves the first part of the lemma. The second part follows by noting that the non-empty word $a a^{-1}$ is canceled in $d*w$ and $q*w$.		

\smallskip

Now suppose that $a = 1$; then $w = w_1 t^{-1}$, $v = v_1$ and $d = t d_1 = u v$. Moreover $t \neq 1$ since $ta \neq 1$. We have that
	\begin{multline} \label{ML3.2A2b++}\rho(wu) = \rho(w_1 t^{-1} u) = \rho(w_1 t^{-1} u v v^{-1}) = \\
 	\rho(w_1 t^{-1} t d_1 v^{-1}) = \rho(w_1 d_1 v^{-1}) = \rho((w*d) v^{-1}).\end{multline}
We have that 
	$$\rho(wuv) = \rho(wd) = \rho(w_1 t^{-1} t d_1) = \rho(w_1 d_1) = w*d,$$
so the identity involving $u, v, w$ that follows from (\ref{ML3.2A2b++}) is
	$$w \centerdot u \equiv w \centerdot u \centerdot v \centerdot v^{-1},$$
which is basic. By setting $w' : = w$, $q := u$, $p := v$ this proves the first part of the lemma. 

For the second part we observe that $u$ and $t$ share a non-empty prefix $t_0$ and that $t_0^{-1}$ is a suffix of $w$, so $t_0^{-1} t_0$ is canceled in $w*q$. Since $d = uv$ then $t_0^{-1} t_0$ is canceled in $w*d$.

\bigskip

\textbf{2B)} $d = t d_1 a = u_2 v u_1$, $w = a^{-1} w_1 t^{-1}$, $d*w = d_1 w_1$, $w * d = w_1 d_1$, with $u_1, u_2, d_1, w_1,$ $ta \neq 1$. We have that 
	$$\rho(d w) = \rho(t d_1 a a^{-1} w_1 t^{-1}) = \rho(t d_1 w_1 t^{-1}) = \rho(t (d*w) t^{-1}),$$
and since $d = \rho(u_2 v u u_2^{-1})$ then
	\begin{equation} \label{ML3.2B} d*w = \rho(t^{-1} d w t) = \rho(t^{-1} u_2 v u u_2^{-1} w t).\end{equation}
We also have that 
	$$\rho(w d) = \rho(a^{-1} w_1 t^{-1} t d_1 a) = \rho(a^{-1} w_1 d_1 a) = \rho(a^{-1} (w*d) a),$$
and since $d = \rho(u_1^{-1} u v u_1)$ then
	\begin{equation} \label{ML3.2B'} w*d = \rho(a w d a^{-1}) = \rho(a w u_1^{-1} u v u_1  a^{-1}).\end{equation}

\medskip

We split case 2B into four subcases: 1) $d_1$ is a subword of $u_2 v$, 2) $d_1$ is a subword of $u_1$, 3) $d_1$ is a subword of $v u_1$ but neither of $v$ nor of $u_1$, 4) $d_1$ is neither a subword of $u_2 v$ nor of $v u_1$.

\medskip

\textbf{2B1)} $d_1$ is a subword of $u_2 v$.

\medskip

\begin{tabular}{|c|c|}
	\hline
	\rule{0pt}{2.3ex}
	$\,\,\,\,  u_2 v \,\,\,\, $ & $u_1 \hspace{0.3mm}$ \\  
	\hline
\end{tabular}

\begin{tabular}{|c|c|c|}
	\hline 
	\rule{0pt}{2.3ex}
	$t$ & $d_1$ & $\,\, \,a \,\,\, $ \\
	\hline
\end{tabular}	

\medskip

There exists a word $x$ such that $u_2 v = t d_1 x$ and $a = x u_1$. Therefore $d = t d_1 x u_1$, $a^{-1} = u_1^{-1} x^{-1}$ and thus $w = u_1^{-1} x^{-1} w_1 t^{-1}$. Also $uv = u_1 u_2 v = u_1 t d_1 x$.

First we assume that $t \neq 1$. Then since $u_1 \neq 1$ the first letter of $d$ is the first of $t$ and the last letter of $d$ is the last of $u_1$. Since $d$ is cyclically reduced these letters are not inverse one of the other. But these two letters are the first and last one of $w^{-1}$, which is cyclically reduced, therefore $w$ is cyclically reduced.

The word $w' := x^{-1} w_1 t^{-1} u_1^{-1}$ is a cyclic permutation of $w$ and is reduced. Let us set $\overline{a} := x$, $\overline{t} := u_1 t$, so $\overline{t}^{-1} := t^{-1} u_1^{-1}$. Thus $uv = u_1 t d_1 x = \overline{t} d_1 \overline{a}$ and $w' := \overline{a}^{-1} w_1 \overline{t}^{-1}$. By changing $w$ with $w'$ this case reduces to 2A.

\smallskip

Now let $t = 1$. This implies that $d = d_1 x u_1$, $w = u_1^{-1} x^{-1} w_1$ and that $u_2 v = d_1 x$, thus $v = \rho(u_2^{-1} d_1 x)$, which implies that $v^{-1} = \rho(x^{-1} d_1^{-1} u_2)$. 

The word $u_0 := u_2 u_1$ is a cyclic permutation of $u$ and we have that 
	\begin{multline} \label{ML3.2B1} \rho(u_0 w) = \rho(u_2 u_1 u_1^{-1} x^{-1} w_1) = \rho(u_2 x^{-1} w_1) = \rho(u_2 x^{-1} d_1^{-1} d_1 w_1) = \\ \rho(u_2 x^{-1} d_1^{-1} u_2 u_2^{-1} (d*w)) = \rho(u_2 v^{-1} u_2^{-1} (d*w)).\end{multline}	
Since $u_0 = \rho(u_2 u u_2^{-1})$ and in view of (\ref{ML3.2B}), the identity among relations that follows from (\ref{ML3.2B1}) is  
	$$u_2 u u_2^{-1} \centerdot w \equiv u_2 v^{-1} u_2^{-1} \centerdot u_2 v u_2^{-1} \centerdot u_2 u u_2^{-1} \centerdot w,$$
which is basic. By setting $q := u_0$, $w' := w$ and $p := v$ this proves the first part of the lemma. For the second part we observe that the word $u_1 u_1^{-1}$ is canceled in $d*w$ and $q*w'$ and $u_1 \neq 1$.

\medskip

\textbf{2B2)} $d_1$ is a subword of $u_1$.

\medskip

\begin{tabular}{|c|c|}
	\hline
	\rule{0pt}{2.3ex}
	$u_2 v$ & $\,\,\,\,\,\,  u_1 \,\,\,\,\,\,\hspace{0.3mm}$ \\  
	\hline
\end{tabular}

\begin{tabular}{|c|c|c|}
	\hline 
	\rule{0pt}{2.3ex}
	$\,\,\,\,\, t \,\,\,\,\,$ & $d_1$ & $a$ \\
	\hline
\end{tabular}	

\medskip

There exists a word $x$ such that $t = u_2 v x$ and $u_1 = x d_1 a$. Then $t^{-1} = x^{-1} v^{-1} u_2^{-1}$ and therefore $w = a^{-1} w_1 x^{-1} v^{-1} u_2^{-1}$. Also $u = u_1 u_2 = x d_1 a u_2$ and therefore $uv = x d_1 a u_2 v$.

First we assume that $a \neq 1$. Then since $u_2 \neq 1$ the first letter of $d$ is the first of $u_2$ and the last letter of $d$ is the last of $a$. Since $d$ is cyclically reduced these letters are not inverse one of the other. But these two letters are the first and last one of $w^{-1}$, which is cyclically reduced, therefore $w$ is cyclically reduced.

The word $w' := v^{-1} u_2^{-1} a^{-1} w_1 x^{-1}$ is a cyclic permutation of $w$ and is reduced. We set $\overline{a} := a u_2 v$, $\overline{t} := x$, so $\overline{a}^{-1} := v^{-1} u_2^{-1} a^{-1}$, $uv =  \overline{t} d_1 \overline{a}$ and $w' := \overline{a}^{-1} w_1 \overline{t}^{-1}$.

By changing $w$ with $w'$ this case reduces to 2A.

\smallskip

Now let $a = 1$. Then $u_1 = x d_1$ and $w = w_1 x^{-1} v^{-1} u_2^{-1}$. The word $u_0 := u_2 u_1$ is a cyclic permutation of $u$ and we have that 
	\begin{multline} \label{ML3.2B2} \rho(w u_0) = \rho(w_1 x^{-1} v^{-1} u_2^{-1} u_2 u_1) = \rho(w_1 x^{-1} v^{-1} u_1) = \rho(w_1 d_1 d_1^{-1} x^{-1} v^{-1} u_1) = \\ \rho((w*d) d_1^{-1} x^{-1} v^{-1} u_1) = \rho((w*d) u_1^{-1} v^{-1} u_1).\end{multline}	
Since $u_0 = \rho(u_1^{-1} u u_1)$ and in view of (\ref{ML3.2B'}), the identity among relations that follows from (\ref{ML3.2B2}) is  
	$$w \centerdot u_1^{-1} u u_1 \equiv w \centerdot u_1^{-1} u u_1 \centerdot u_1^{-1} v u_1 \centerdot u_1^{-1} v^{-1} u_1,$$
which is basic. By setting $q := u_0$, $w' := w$ and $p := v$ this proves the first part of the lemma. For the second part we observe that the word $u_2^{-1} u_2$ is canceled in $w*d$ and $w'*q$ and $u_2 \neq 1$.

\bigskip

\textbf{2B3)} $d_1$ is a subword of $v u_1$ but neither of $v$ nor of $u_1$.

\medskip

\begin{tabular}{|c|c|c|}
	\hline
	\rule{0pt}{2.3ex}
	$u_2$ & $v$ & $\,\, u_1 \,\,$ \\  
	\hline
\end{tabular}

\begin{tabular}{|c|c|c|}
	\hline 
	\rule{0pt}{2.3ex}
	$\,\,\,\, t \,\,\,\,$ & $d_1$ & $a$ \\
	\hline
\end{tabular}	

\medskip

There exist words $x, d_2, d_3$ such that $t = u_2 x$, $v = x d_2$, $d_1 = d_2 d_3$, $u_1 = d_3 a$. Thus $t^{-1} = x^{-1} u_2^{-1}$ and therefore $w = a^{-1} w_1 x^{-1} u_2^{-1}$. Also $u = u_1 u_2 = d_3 a u_2$, $d = u_2 x d_2 d_3 a$ and $d*w = d_2 d_3 w_1$.

The word $w' := u_2^{-1} a^{-1} w_1 x^{-1}$ and $v_0 := d_2 x$ are cyclic permutations of $w$ and $v$ respectively and $v_0^{-1} := x^{-1} d_2^{-1}$. We have that 
	$$\rho(uw') = \rho(d_3 a u_2 u_2^{-1} a^{-1} w_1 x^{-1}) = \rho(d_3 w_1 x^{-1}) =$$
	$$\rho(d_2^{-1} d_2 d_3 w_1 x^{-1}) = \rho(d_2^{-1} (d*w) x^{-1}),$$
so
	\begin{equation} \label{ML3.2B3} \rho(x^{-1} u w' x) = \rho(x^{-1} d_2^{-1} (d*w)) = \rho(v_0^{-1} (d*w)).\end{equation}	
Since $w' = \rho(u_2^{-1} w u_2)$ and $v_0 = \rho(x^{-1} v x)$ and in view of (\ref{ML3.2B}), the identity among relations that follows from (\ref{ML3.2B3}) is  
	$$x^{-1} u x \centerdot (x^{-1} u_2^{-1}) w (u_2 x) \equiv x^{-1} v^{-1} x \centerdot (t^{-1} u_2) v (u_2^{-1} t) \centerdot (t^{-1} u_2) u (u_2^{-1} t)  \centerdot t^{-1} w t.$$
Since $\rho(t^{-1} u_2) = x^{-1}$ and $\rho(x^{-1} u_2^{-1}) = t^{-1}$, this identity is basic and by setting $q := u$ and $p := v_0$ this proves the first part of the lemma.

The second part follows from the fact that the non-empty word $u_2^{-1} u_2$ is canceled in $q*w'$ and by Proposition \ref{puzo} it is canceled also in $d*w$.


\bigskip

\textbf{2B4)} $d_1$ is neither a subword of $u_2 v$ nor of $v u_1$.

\medskip

\begin{tabular}{|c|c|c|}
	\hline
	\rule{0pt}{2.3ex}
	$u_2$ & $v$ & $u_1$ \\  
	\hline
\end{tabular}

\begin{tabular}{|c|c|c|}
	\hline 
	\rule{0pt}{2.3ex}
	$t$ & $\,\, d_1 \,\,$ & $a \hspace{0.1mm}$ \\
	\hline
\end{tabular}

\medskip

We have that $d_1$ overlaps with $u_1$, otherwise it would be a subword of $u_2 v$. Also $d_1$ overlaps with $u_2$, otherwise it would be a subword of $v u_1$. This implies that $t$ is a prefix of $u_2$ and $a$ is a suffix of $u_1$. Therefore there exist words $d_2, d_3$ such that $u_2 = t d_2$, $d_1 = d_2 v d_3$ and $u_1 = d_3 a$. Thus $d*w = d_2 v d_3 w_1$ and $u = u_1 u_2 = d_3 a t d_2$.

The word $u' := u_2 u_1 = t d_2 d_3 a$ is a cyclic permutation of $u$. We have that 
	$$\rho(u' w) = \rho(t d_2 d_3 a a^{-1} w_1 t^{-1}) = \rho(t d_2 d_3 w_1 t^{-1}) =$$
	$$\rho(t d_2 v^{-1} d_2^{-1} d_2 v d_3 w_1 t^{-1}) = \rho(t d_2 v^{-1} d_2^{-1} (d*w) t^{-1}),$$
so
	\begin{equation} \label{ML3.2B4} \rho(t^{-1} u' w t) = \rho(d_2 v^{-1} d_2^{-1} (d*w)).\end{equation}	
Since $u' = \rho(u_2 u u_2^{-1})$ and in view of (\ref{ML3.2B}), the identity among relations that follows from (\ref{ML3.2B4}) is
	$$(t^{-1} u_2) u (u_2^{-1} t) \centerdot t^{-1} w t \equiv d_2 v^{-1} d_2^{-1} \centerdot (t^{-1} u_2) v (u_2^{-1} t) \centerdot (t^{-1} u_2) u (u_2^{-1} t)  \centerdot t^{-1} w t.$$
Since $\rho(t^{-1} u_2) = d_2$, this identity is basic and by setting $q := u'$, $w' := w$ and $p := v$ this proves the first part of the lemma. 

For the second part we observe that $ta \neq 1$ and that $q = t d_2 d_3 a$, $w' = a^{-1} w_1 t^{-1}$, $d = t d_1 a$, so by Proposition \ref{puzo} the words $a a^{-1}$ and $t^{-1} t$ are canceled in $q*w'$ and in $d*w$.



\bigskip

\textbf{3A)} $d = uv = w_1^{-1} s (d*w) s^{-1} a$, $w = a^{-1} w_1$. By setting $g := d*w$ we have that $uv = w_1^{-1} s g s^{-1} a$.

We have that
	$$\rho(uvw) = \rho(dw) = \rho(w_1^{-1} s g s^{-1} a a^{-1} w_1) = \rho(w_1^{-1} s g s^{-1} w_1),$$
so
	\begin{equation} \label{3.3A} \rho(w_1 uvw w_1^{-1}) = s g s^{-1}.\end{equation}

We also have that
	\begin{equation} \label{3.3A0.5} \rho(wuv) = \rho(wd) = \rho(a^{-1} w_1 w_1^{-1} s g s^{-1} a) = \rho(a^{-1} s g s^{-1} a),\end{equation}
which implies that 
	\begin{equation} \label{3.3A'} \rho(a wuv a^{-1}) = \rho(a wd a^{-1}) = s g s^{-1}.\end{equation}
Since $w*d = \hat{\rho}(wd)$, there exists a word $\alpha$ such that $\rho(wd) = \alpha (w*d) \alpha^{-1}$, so (\ref{3.3A0.5}) implies that
	\begin{equation} \label{3.3A1/2} \alpha (w*d) \alpha^{-1} = \rho(a^{-1} s g s^{-1} a) = \rho(wuv).\end{equation}
If $a = 1$ then (\ref{3.3A'}) implies that
	\begin{equation} \label{3.3A''} w*d = g = d*w.\end{equation}

The word equation $uv = w_1^{-1} s (d*w) s^{-1} a$ leads to three different possible solutions.

\medskip

\textbf{3A1)}  \hspace{0.1cm}
\begin{tabular}{|c|c|}
	\hline
	\rule{0pt}{2.3ex}
	$\,\,\,\,\,\,\,\,\,\,\,\,\,\,\,\,\,  u \,\,\,\,\,\,\,\,\,\,\,\,\,\,\,\, \hspace{0.45mm}$ & $v$ \\  
	\hline
\end{tabular}

\hspace{11.75mm}
\begin{tabular}{|c|c|c|}
	\hline 
	\rule{0pt}{2.3ex}
	$w_1^{-1}$ & $s g s^{-1}$ & $\,\, a \,\,\,$ \\
	\hline
\end{tabular}

\medskip

There exists a word $a_1$ such that $u = w_1^{-1} s g s^{-1} a_1$ and $a = a_1 v$. Thus $a^{-1} = v^{-1} a_1^{-1}$ and $w = v^{-1} a_1^{-1} w_1$.

The word $u_0 := s g s^{-1} a_1 w_1^{-1} = \rho(w_1 u w_1^{-1})$ is a cyclic permutation of $u$ and $u_0^{-1} = w_1 a_1^{-1} s g^{-1} s^{-1}$.

Since $a_1^{-1} =\rho(u^{-1} w_1^{-1} s g s^{-1})$ then
	$$\rho(vw) = \rho(v v^{-1} a_1^{-1} w_1) = \rho(a_1^{-1} w_1) = \rho(u^{-1} w_1^{-1} s g s^{-1} w_1),$$
therefore
	\begin{equation} \label{ML3.3A1}\rho(w_1 v w w_1^{-1}) = \rho(w_1 u^{-1} w_1^{-1} s g s^{-1}).\end{equation}
By virtue of (\ref{3.3A}) the identity involving $u, v, w$ that follows from (\ref{ML3.3A1}) is	
	$$w_1 v w_1^{-1} \centerdot w_1 w w_1^{-1} \equiv  w_1 u^{-1} w_1^{-1} \centerdot w_1 u w_1^{-1} \centerdot w_1 v w_1^{-1} \centerdot w_1 w w_1^{-1},$$
which is basic. By setting $q := v$, $w' := w$, $p := u$ this proves the first part of the lemma.	
	
Since $d = uv = w_1^{-1} s g s^{-1} a_1 v$ and $w = v^{-1} a_1^{-1} w_1$, the non-empty word $v v^{-1}$ is canceled in $d*w$ and in $q*w'$ and that proves the second part of the lemma.

\bigskip

\textbf{3A2)} \hspace{0.1cm}
\begin{tabular}{|c|c|}
	\hline
	\rule{0pt}{2.3ex}
	$\,\,\,\,\,\,\,\,\,\,u\,\,\,\,\,\,\,\,\,\,$ & $\,\,\,\, v \,\,\,\,\hspace{0.5mm}$ \\  
	\hline
\end{tabular}

\hspace{11.8mm}
\begin{tabular}{|c|c|c|}
	\hline 
	\rule{0pt}{2.3ex}
	$w_1^{-1}$ & $s g s^{-1}$ & $a$ \\
	\hline
\end{tabular}

\medskip

There exist words $x_1, x_2$ such that $u = w_1^{-1} x_1$, $s g s^{-1} = x_1 x_2$, $v = x_2 a$. The word $u_0 := x_1 w_1^{-1}$ is a cyclic permutation of $u$ and $u_0^{-1} = w_1 x_1^{-1}$.

Let $a \neq 1$; then 
	$$\rho(vw) = \rho(x_2 a a^{-1} w_1) = \rho(x_2 w_1) = \rho(x_1^{-1} x_1 x_2 w_1) = \rho(x_1^{-1} s g s^{-1} w_1),$$
so
	\begin{equation} \label{ML3.3A2a} \rho(w_1 v w  w_1^{-1}) = \rho(w_1 x_1^{-1} s g s^{-1}) = \rho(u_0^{-1} s g s^{-1}).\end{equation}
Since $u_0 = \rho(w_1 u w_1^{-1})$ and by virtue of (\ref{3.3A}) the identity involving $u, v, w$ that follows from (\ref{ML3.3A2a}) is	
	$$w_1 v w_1^{-1} \centerdot w_1 w w_1^{-1} \equiv  w_1 u^{-1} w_1^{-1} \centerdot w_1 u w_1^{-1} \centerdot w_1 v w_1^{-1} \centerdot w_1 w w_1^{-1},$$
which is basic. By setting $q := v$, $w' := w$, $p := u_0$ this proves the first part of the lemma.

Since $d = uv = w_1^{-1} x_1 x_2 a$, the non-empty word $a a^{-1}$ is canceled in $d*w$ and in $q*w'$, which proves the second part of the lemma.

\smallskip

Now let $a = 1$; then $w = w_1$, $u = w^{-1} x_1$, $v = x_2$, $d = w^{-1} s g s^{-1}$. By (\ref{3.3A''}) we have that $w*d = g = d*w$.

We also have that
	\begin{multline} \label{ML3.3A2b} \rho(wu) = \rho(w w^{-1} x_1) = \rho(x_1) = \rho(x_1 x_2 x_2^{-1}) = \\ \rho(s g s^{-1} v^{-1}) = \rho(s (w*d) s^{-1} v^{-1})\end{multline}
and from (\ref{3.3A'}) the identity among relations that follows from (\ref{ML3.3A2b}) is  
	$$w \centerdot u \equiv  w \centerdot u \centerdot v  \centerdot v^{-1},$$
which is basic. By setting $q := u$, $w' := w$, $p := v$ this proves the first part of the lemma.

Since $d = w^{-1} s (d*w) s^{-1}$ and $q = w^{-1} x_1$, the non-empty word $w w^{-1}$ is canceled in $w*d$ and also in $w'*q$, proving the second part of the lemma.

\bigskip

\textbf{3A3)} \hspace{0.1cm}
\begin{tabular}{|c|c|}
	\hline
	\rule{0pt}{2.3ex}
	$u$ & $\,\,\,\,\,\,\,\,\,\,\,\,\,\,  v \,\,\,\,\,\,\,\,\,\,\,\,\,\,\hspace{0.5mm}$ \\  
	\hline
\end{tabular}

\hspace{11.8mm}
\begin{tabular}{|c|c|c|}
	\hline 
	\rule{0pt}{2.3ex}
	$w_1^{-1}$ & $s g s^{-1}$ & $a$ \\
	\hline
\end{tabular}

\medskip

There exists a word $v_1$ such that $w_1^{-1} = u v_1$ and $v = v_1 s  g s^{-1} a$. Thus $w_1 = v_1^{-1} u^{-1}$ and therefore $w = a^{-1} v_1^{-1} u^{-1}$.

By (\ref{3.3A1/2}) we have that
	\begin{multline} \label{ML3.3A3} \rho(w u) = \rho(a^{-1} v_1^{-1} u^{-1} u) = \rho(a^{-1} v_1^{-1}) = \\ \rho(a^{-1} s g s^{-1} a a^{-1} s g^{-1} s^{-1} v_1^{-1}) = \rho(\alpha (w*d) \alpha^{-1} v^{-1}),\end{multline} 
and from (\ref{3.3A1/2}) the identity among relations that follows from (\ref{ML3.3A3}) is  
	$$w \centerdot u \equiv  w \centerdot u \centerdot v  \centerdot v^{-1},$$
which is basic. By setting $q := u$, $w' := w$, $p := v$ this proves the first part of the lemma. The second part follows from the fact that the non-empty word $u^{-1} u$ is canceled in $w*d$ and $w'*q$.


\bigskip

\textbf{3B)} By setting $g := d*w$ we have that $d = u_2 v u_1 = w_1^{-1} s g s^{-1} a$, $w = a^{-1} w_1$, $\rho(dw) = w_1^{-1} s g s^{-1} w_1$

Since $d = \rho(u_2 v u u_2^{-1})$ then 
	\begin{multline} \label{sgs33B} s g s^{-1} = \rho(w_1 d w w_1^{-1}) = \rho(w_1 u_2 v u u_2^{-1} w w_1^{-1}) =\\ \rho((w_1 u_2) v u (u_2^{-1} w_1^{-1}) w_1 w w_1^{-1})\end{multline}

We also have that $\rho(wd) = \rho(a^{-1} w_1 w_1^{-1} s g s^{-1} a) = \rho(a^{-1} s g s^{-1} a)$, thus $s g s^{-1} = \rho(a wd a^{-1})$ and since $d = \rho(u_1^{-1} u v u_1)$ then 
	\begin{equation} \label{sgs33B'} s g s^{-1} = \rho(a w d a^{-1}) = \rho(a w u_1^{-1} u v u_1 a^{-1}).\end{equation}

We split case 3B into three subcases.

\medskip

\textbf{3B1)} $u_1$ is a suffix of $a$.

\begin{tabular}{|c|c|}
	\hline
	\rule{0pt}{2.3ex}
	$\,\,\,\,\,\,\,\,\,\,  u_2 v \,\,\,\,\,\,\,\,\,\,$ & $u_1$ \\  
	\hline
\end{tabular}

\begin{tabular}{|c|c|}
	\hline 
	\rule{0pt}{2.3ex}
	$w_1^{-1} s g s^{-1}$ & $\,\,\, a \,\,\,\hspace{0.3mm}$ \\
	\hline
\end{tabular}

\medskip

Since $u_1$ is a suffix of $a$ then $w_1^{-1} s g s^{-1}$ is a prefix of $u_2 v$, therefore there exists a word $a_1$ such that $u_2 v = w_1^{-1} s g s^{-1} a_1$ and $a = a_1 u_1$. Thus $a^{-1} = u_1^{-1} a_1^{-1}$ and therefore $w = u_1^{-1} a_1^{-1} w_1$.

First we assume that $w_1^{-1} \neq 1$. Then since $u_1 \neq 1$ the first letter of $d$ is the first of $w_1^{-1}$ and the last letter of $d$ is the last of $u_1$. Since $d$ is cyclically reduced these letters are not inverse one of the other. But these two letters are the first and last one of $w^{-1}$, which is cyclically reduced, therefore $w$ is cyclically reduced.

We have that $u v = u_1 u_2 v = u_1 w_1^{-1} s g s^{-1} a_1$. The word $w' := a_1^{-1} w_1 u_1^{-1}$ is a cyclic permutation of $w$ and is reduced. We set $\overline{w_1} := w_1 u_1^{-1}$ and $\overline{a} := a_1$, so $\overline{w_1}^{-1} = u_1 w_1^{-1}$, therefore $u v = \overline{w_1}^{-1} s g s^{-1} \overline{a}$ and $w' = \overline{a}^{-1} \overline{w_1}$.

By changing $w$ with $w'$ this case reduces to 3A.

\smallskip

Now let $w_1^{-1} = 1$, that is $w_1 = 1$. This implies that $w = u_1^{-1} a_1^{-1}$ and $u_2 v = s g s^{-1} a_1$ therefore $v = \rho(u_2^{-1} s g s^{-1} a_1)$.

The word $u_0 := u_2 u_1$ is a cyclic permutation of $u$ and since $\rho(u_2 a_1^{-1} s g^{-1} s^{-1}) = \rho(u_2 v^{-1} u_2^{-1})$ we have that 
	\begin{multline} \label{ML3.3B1} \rho(u_0 w) = \rho(u_2 u_1 u_1^{-1} a_1^{-1}) = \rho(u_2 a_1^{-1}) = \\ \rho(u_2 a_1^{-1} s g^{-1} s^{-1} s g s^{-1}) = \rho(u_2 v^{-1} u_2^{-1} s g s^{-1}).\end{multline}
Since $u_0 = \rho(u_2 u u_2^{-1})$ and in view of (\ref{sgs33B}) the identity among relations that follows from (\ref{ML3.3B1}) is 
	$$u_2 u u_2^{-1} \centerdot w \equiv u_2 v^{-1} u_2^{-1} \centerdot u_2 v u_2^{-1} \centerdot u_2 u u_2^{-1} \centerdot w,$$
which is basic. By setting $q := u_0$, $w' := w$, $p := v$ this proves the first part of the lemma.

The second part of the lemma follows by noting that the word $u_1 u_1^{-1}$ is canceled in $d*w$ and $q'*w$ and that $u_1 \neq 1$.

\medskip

\textbf{3B2)} $u_2$ is a prefix of $w_1^{-1}$.

\begin{tabular}{|c|c|}
	\hline
	\rule{0pt}{2.3ex}
	$u_2$ & $\,\,\,\,\,\,\,\,\,\, v u_1 \,\,\,\,\,\,\,\,\,\,$ \\  
	\hline
\end{tabular}

\begin{tabular}{|c|c|}
	\hline 
	\rule{0pt}{2.3ex}
	$w_1^{-1}$ & $\,\,\, s g s^{-1} a \,\,\,\hspace{0.3mm}$ \\
	\hline
\end{tabular}

\medskip

Since $u_2$ is a prefix of $w_1^{-1}$ then $s g s^{-1} a$ is a suffix of $v u_1$, therefore there exists a word $v_1$ such that $w_1^{-1} = u_2 v_1$ and $v u_1 = v_1 s g s^{-1} a$.

First we assume that $a \neq 1$. Then since $u_2 \neq 1$ the first letter of $d$ is the first of $u_2$ and the last letter of $d$ is the last of $a$. Since $d$ is cyclically reduced these letters are not inverse one of the other. But these two letters are the first and last one of $w^{-1}$, which is cyclically reduced, therefore $w$ is cyclically reduced.

We have that $v u = v u_1 u_2 = v_1 s g s^{-1} a u_2$ and that $w_1 = v_1^{-1} u_2^{-1}$, therefore $w = a^{-1} v_1^{-1} u_2^{-1}$. The word $w' := u_2^{-1} a^{-1} v_1^{-1}$ is a cyclic permutation of $w$. We set $\overline{a} := a u_2$ and $\overline{w_1} := v_1^{-1}$, so $v u = \overline{w_1}^{-1} s g s^{-1} \overline{a}$ and $w' = \overline{a}^{-1} \overline{w_1}$.

By changing $w$ with $w'$ and swapping $u$ and $v$ this case reduces to 3A.

\smallskip

Now let $a = 1$. Then $w = w_1$, $w^{-1} = u_2 v_1$, thus $w = v_1^{-1} u_2^{-1}$; moreover $v u_1 = v_1 s g s^{-1}$, therefore $v = \rho(v_1 s g s^{-1} u_1^{-1})$.

The word $u_0 := u_2 u_1$ is a cyclic permutation of $u$ and since $\rho(s g^{-1} s^{-1} v_1^{-1} u_1) = \rho(u_1^{-1} v^{-1} u_1)$ we have that 
	\begin{multline} \label{ML3.3B2} \rho(w u_0) = \rho(v_1^{-1} u_2^{-1} u_2 u_1) = \rho(v_1^{-1} u_1) = \\ \rho(s g s^{-1} s g^{-1} s^{-1} v_1^{-1} u_1) = \rho(s g s^{-1} u_1^{-1} v^{-1} u_1).\end{multline}
Since $u_0 = \rho(u_1^{-1} u u_1)$ and in view of (\ref{sgs33B'}) the identity among relations that follows from (\ref{ML3.3B2}) is 
	$$w \centerdot u_1^{-1} u u_1 \equiv w \centerdot u_1^{-1} u u_1 \centerdot u_1^{-1} v u_1 \centerdot u_1^{-1} v^{-1} u_1,$$
which is basic. By setting $q := u_0$, $w' := w$, $p := v$ this proves the first part of the lemma. The second part of the lemma follows by noting that the word $u_2^{-1} u_2$ is canceled in $w*d$ and $w*q'$ and that $u_2 \neq 1$.

\medskip

\textbf{3B3)} $u_2$ is not a prefix of $w_1^{-1}$ and $u_1$ is not a suffix of $a$.

\medskip

\begin{tabular}{|c|c|c|}
	\hline
	\rule{0pt}{2.3ex}
	$\,\,\,\, u_2 \,\,\,$ & $\,\,\,\, v \,\,\,$ & $u_1$ \\  
	\hline
\end{tabular}

\begin{tabular}{|c|c|c|}
	\hline 
	\rule{0pt}{2.3ex}
	$w_1^{-1}$ & $s g s^{-1}$ & $a \hspace{0.3mm}$ \\
	\hline
\end{tabular}

\medskip

If $u_2$ is not a prefix of $w_1^{-1}$, then $w_1^{-1}$ is a prefix of $u_2$; if $u_1$ is not a suffix of $a$ then $a$ is a suffix of $u_1$. Therefore there exist words $x, y$ such that $u_2 = w_1^{-1} x$, $s g s^{-1} = x v y$, $u_1 = y a$, thus $u = u_1 u_2 = y a w_1^{-1} x$. The words $u_0 := x y a w_1^{-1}$ and $w' := w_1 a^{-1}$ are cyclic permutations of $u$ and $w$ respectively and we have
	\begin{equation} \label{ML3.3B3} \rho(u_0 w') = \rho(x y a w_1^{-1} w_1 a^{-1}) = \rho(x y) = \rho(x v^{-1} x^{-1} x v y) =  \rho(x v^{-1} x^{-1} s g s^{-1}).\end{equation}
Since $u_0 = \rho(x u x^{-1})$ and $w' = \rho(w_1 w w_1^{-1})$, and in view of (\ref{sgs33B}), the identity among relations that follows from (\ref{ML3.3B3}) is  
	$$x u x^{-1} \centerdot w_1 w w_1^{-1} \equiv x v^{-1} x^{-1} \centerdot (w_1 u_2) v (u_2^{-1} w_1^{-1}) \centerdot (w_1 u_2) u (u_2^{-1} w_1^{-1}) \centerdot w_1 w w_1^{-1}.$$
Since $\rho(w_1 u_2) = x$ that identity is basic. By setting $q := u_0$, $w' := w$, $p := v$ this proves the first part of the lemma.

The second part of the lemma follows by noting that $d = w_1^{-1} s g s^{-1} a$ and $w = a^{-1} w_1$, so the words $a a^{-1}$ and $w_1 w_1^{-1}$ are canceled in $d*w$ by virtue of Proposition \ref{puzo}. Moreover the word $a w_1^{-1} w_1 a^{-1}$ is canceled in $q*w'$ and $a w_1^{-1} \neq 1$ since $a w_1^{-1}$ is a cyclic permutation of $w^{-1}$ and $w \neq 1$.

\section{Proof of Lemma \ref{mainLemma}: fourth case}	\label{prLe4}

Let $u, v, w$ be reduced non-empty words and let $d$ be a cyclic permutation of $u*v$. In this section we suppose that there are cancellations in both $u*v$ and $d*w$.

We split the proof of Lemma \ref{mainLemma} in different subcases depending on the form of $d$. Indeed by Lemma \ref{shirv} three cases are possible:

\begin{enumerate}[1)]
	\item $d = d_1 a$, $w = a^{-1} s (d*w) s^{-1} d_1^{-1}$, $\rho(dw) = d_1 s (d*w) s^{-1} d_1^{-1}$ for words $d_1, a, s$;	
	
	\item $d = t d_1 a$, $w = a^{-1} w_1 t^{-1}$, $\rho(dw) = t d_1 w_1 t^{-1}$, $\rho(wd) = a^{-1} w_1 d_1 a$, for words $t, d_1, a, w_1$ such that $d_1, w_1, ta \neq 1$ and $d*w = d_1 w_1$, $w*d = w_1 d_1$;  
	
	\item $d = w_1^{-1} s (d*w) s^{-1} a$, $w = a^{-1} w_1$, $\rho(dw) = w_1^{-1} s (d*w) s^{-1} w_1$ for words $w_1, s, a$.
\end{enumerate}

By Lemma \ref{shirv4} there exists a pair of words $p$ and $q_0$ such that either $p \sim u$ and $q_0 \sim v$ or $p \sim v$ and $q_0 \sim u$ and such that one of the following two cases holds:  
\begin{enumerate} [(A)]	
	\item $q_0 = p^{-1} r c_1 c_2 r^{-1}$ for words $r, c_1, c_2$ such that $p*q_0 = c_1 c_2$ and $d = c_2 c_1$; moreover $p*q_0 = u*v$; finally if $d = u*v$ then $c_2 = 1$;
		
	\item $p = e_2 b$ and $q_0 = b^{-1} e_3 e_1$ for words $b, e_1, e_2, e_3$ such that $d = e_1 e_2 e_3$ and $b, e_2, e_3 e_1 \neq 1$; finally if $d = u*v$ then $e_1 = 1$.
\end{enumerate}

If 1) holds then
	\begin{equation} \label{4.1} s (d*w) s^{-1} = \rho(d_1^{-1} d w d_1)\end{equation}
and 
	\begin{equation} \label{4.1-0.5} \rho(wd) = \rho(a^{-1} s (d*w) s^{-1} a), \end{equation}
which implies that
	\begin{equation} \label{4.1'} s (d*w) s^{-1} = \rho(a w d a^{-1}).\end{equation}	
Since $w*d = \hat{\rho}(wd)$, there exists a word $\alpha$ such that $\rho(wd) = \alpha (w*d) \alpha^{-1}$, so (\ref{4.1-0.5}) and (\ref{4.1'}) imply that
	\begin{equation} \label{4.1-1.5} \alpha (w*d) \alpha^{-1} = \rho(a^{-1} s (d*w) s^{-1} a) = \rho(w d).\end{equation}

If 2) holds then
	\begin{equation} \label{4.2} d*w = \rho(t^{-1} d w t)\end{equation}
and 
	\begin{equation} \label{4.2'} w*d = \rho(a w d a^{-1}).\end{equation}

If 3) holds then
	\begin{equation} \label{4.3} s (d*w) s^{-1} = \rho(w_1 d w w_1^{-1}).\end{equation}
Also we have that 
	$$\rho(wd) = \rho(a^{-1} w_1 w_1^{-1} s (d*w) s^{-1} a) = \rho(a^{-1} s (d*w) s^{-1} a).$$
Since $w*d = \hat{\rho}(wd)$, there exists a word $\alpha$ such that $\rho(wd) = \alpha (w*d) \alpha^{-1}$, and this implies that 
	\begin{equation} \label{4.4} \alpha (w*d) \alpha^{-1} = \rho(a^{-1} s (d*w) s^{-1} a) = \rho(wd).\end{equation}

Let (A) hold; then $\rho(p q_0) = \rho(p p^{-1} r c_1 c_2 r^{-1}) = \rho(r c_1 c_2 r^{-1})$ and thus $c_1 c_2 = \rho(r^{-1} p q_0 r)$, which implies that
	\begin{equation} \label{4.A} d = \rho(c_1^{-1} r^{-1} p q_0 r c_1) = \rho(c_2 r^{-1} p q_0 r c_2^{-1}).\end{equation}
Also we have that $\rho(q_0 p) = \rho(p^{-1} r c_1 c_2 r^{-1} p)$ and thus $c_1 c_2 = \rho((r^{-1} p) q_0 p (p^{-1} r))$, which implies that
	\begin{equation} \label{4.A'} d = \rho((c_1^{-1} r^{-1} p) q_0 p (p^{-1} r c_1)) = \rho((c_2 r^{-1} p) q_0 p (p^{-1} r c_2^{-1})).\end{equation}
		
Let (B) hold; then $e_2 e_3 e_1 = \rho(p q_0)$, which implies that
	\begin{equation} \label{4.B} d = \rho(e_1 p q_0 e_1^{-1}) = \rho(e_3^{-1} e_2^{-1} p q_0 e_2 e_3).\end{equation}
Also we have that $\rho(q_0 p) = \rho(b^{-1} e_3 e_1 e_2 b)$, thus $e_3 e_1 e_2 = \rho(b q_0 p b^{-1})$, which implies that
	\begin{equation} \label{4.B'} d = \rho(e_3^{-1} b q_0 p b^{-1} e_3) = \rho(e_1 e_2 b q_0 p b^{-1} e_2^{-1} e_1^{-1}).\end{equation}

We will prove Lemma \ref{shirv} in subcases 1A, 1B, 2A, 2B, 3A, 3B.

\bigskip

\textbf{1A)} $d = d_1 a = c_2 c_1$, $w = a^{-1} s (d*w) s^{-1} d_1^{-1}$. If $d = u*v$ then $c_2 = 1$.

\smallskip

By (\ref{4.1}) and (\ref{4.A}) we have that 
	\begin{equation} \label{ML4.1A} s (d*w) s^{-1} = \rho(d_1^{-1} c_1^{-1} r^{-1} p q_0 r c_1 w d_1)\end{equation}
and that
	\begin{equation} \label{ML4.1A'} s (d*w) s^{-1} = \rho(d_1^{-1} c_2 r^{-1} p q_0 r c_2^{-1} w d_1)\end{equation}
	
By (\ref{4.1'}) and (\ref{4.A'}) we have that
	\begin{equation} \label{ML4.1A''} s (d*w) s^{-1} = \rho(a w (c_2 r^{-1} p) q_0 p (p^{-1} r c_2^{-1}) a^{-1}) \end{equation}

\smallskip

The word equation $d_1 a = c_2 c_1$ leads to two different possible solutions.

\bigskip

\textbf{1A1)}  \hspace{0.1cm}
\begin{tabular}{|c|c|}
	\hline
	\rule{0pt}{2.3ex}
	$\,\, c_2 \,\,$ & $c_1$ \\  
	\hline
\end{tabular}

\hspace{11.75mm}
\begin{tabular}{|c|c|}
	\hline 
	\rule{0pt}{2.3ex}
	$d_1$ & $\,\,\, a \,\,\hspace{0.4mm}$ \\
	\hline
\end{tabular}

\medskip

There exists a word $x$ such that $c_2 = d_1 x$ and $a = x c_1$, so $a^{-1} = c_1^{-1} x^{-1}$ and $w = c_1^{-1} x^{-1} s (d*w) s^{-1} d_1^{-1}$, $q_0 = p^{-1} r c_1 d_1 x r^{-1}$, $d = d_1 x c_1$.

Since $d = d_1 x c_1$ and $d \neq 1$ then it is not possible that both $d_1 c_1$ and $x$ be equal to 1.

First suppose that $d_1 c_1\neq 1$. The words $q := x r^{-1} p^{-1} r c_1 d_1$ and $w' := d_1^{-1} c_1^{-1} x^{-1} s (d*w) s^{-1} = \rho(d_1^{-1} w d_1)$ are cyclic permutations of $q_0$ and $w$ respectively and we have
	\begin{multline} \label{ML4.1A1a} \rho(q w') = \rho(x r^{-1} p^{-1} r c_1 d_1 d_1^{-1} c_1^{-1} x^{-1} s (d*w) s^{-1}) = \\ \rho(x r^{-1} p^{-1} r x^{-1} s (d*w) s^{-1}).\end{multline}
By (\ref{ML4.1A'}) and since $\rho(d_1^{-1} c_2) = x$, $d_1^{-1} = \rho(x c_2^{-1})$, $q_0 = \rho(r x^{-1} q x r^{-1})$ we have that 
	$$s (d*w) s^{-1} = \rho(x r^{-1} p r x^{-1} q x r^{-1} r c_2^{-1} w d_1) = \rho(x r^{-1} p r x^{-1} q d_1^{-1} w d_1),$$
so the identity among relations that follows from (\ref{ML4.1A1a}) is	
	$$q \centerdot d_1^{-1} w d_1 \equiv (x r^{-1}) p^{-1} (r x^{-1}) \centerdot (x r^{-1}) p (r x^{-1}) \centerdot q \centerdot d_1^{-1} w d_1,$$
which is basic, proving the first part of the lemma. If $d = u*v$ then $c_2 = 1$, thus $d_1 = 1$ and then $w' = w$.

The second part follows by noting that $d = d_1 x c_1$ and $w = c_1^{-1} x^{-1} s (d*w) s^{-1} d_1^{-1}$, the words $c_1 c_1^{-1}$ and $d_1^{-1} d_1$ are canceled in $d*w$ by Proposition \ref{puzo}, the word $c_1 d_1 d_1^{-1} c_1^{-1}$ is canceled in $q*w'$ and that $d_1 c_1\neq 1$.

\smallskip

Now suppose that $d_1 c_1 = 1$. Then $d = x$ and $q_0 = p^{-1} r d r^{-1}$, $w = d^{-1} s (d*w) s^{-1}$. The word $q := r^{-1} p^{-1} r d$ is a cyclic permutation of $q_0$ and 
	\begin{equation} \label{ML4.1A1b} \rho(qw) = \rho(r^{-1} p^{-1} r d d^{-1} s (d*w) s^{-1}) = \rho(r^{-1} p^{-1} r s (d*w) s^{-1}).\end{equation}
By (\ref{ML4.1A}) and since $q_0 = \rho(r q r^{-1})$ we have that 	
	$$s (d*w) s^{-1} = \rho(r^{-1} p q_0 r w) = \rho(r^{-1} p r q r^{-1} r w) = \rho(r^{-1} p r q w),$$	
so the identity among relations that follows from (\ref{ML4.1A1b}) is	
	$$q \centerdot  w  \equiv r^{-1} p^{-1} r  \centerdot r^{-1} p r  \centerdot q \centerdot w,$$
which is basic. By setting $w' := w$ this proves the first part of the lemma. The second part follows from the fact that the non-empty word $d d^{-1}$ is canceled in both $d*w$ and $q*w'$.

\bigskip

\textbf{1A2)}  \hspace{0.1cm}
\begin{tabular}{|c|c|}
	\hline
	\rule{0pt}{2.3ex}
	$c_2$ & $\, c_1 \,$ \\  
	\hline
\end{tabular}

\hspace{11.75mm}
\begin{tabular}{|c|c|}
	\hline 
	\rule{0pt}{2.3ex}
	$\,\, d_1 \,\hspace{0.4mm}$ & $a$ \\
	\hline
\end{tabular}

\medskip

There exists a word $x$ such that $d_1 = c_2 x$ and $c_1 = x a$, so $d_1^{-1} = x^{-1} c_2^{-1}$ and then $w = a^{-1} s (d*w) s^{-1} x^{-1} c_2^{-1}$, $q_0 = p^{-1} r x a c_2 r^{-1}$, $d = c_2 x a$.

Since $d = c_2 x a$ and $d \neq 1$ then it is not possible that both $c_2 a$ and $x$ be equal to 1.

First suppose that $c_2 a \neq 1$. The words $q := r^{-1} p^{-1} r x a c_2$ and $w' := c_2^{-1} a^{-1} s (d*w) s^{-1} x^{-1} = \rho(c_2^{-1} w c_2)$ are cyclic permutations of $q_0$ and $w$ respectively and we have
	\begin{multline} \label{ML4.1A2a}  \rho(qw') = \rho(r^{-1} p^{-1} r x a c_2 c_2^{-1} a^{-1} s (d*w) s^{-1} x^{-1}) = \\ 
	\rho(r^{-1} p^{-1} r x s (d*w) s^{-1} x^{-1}).\end{multline}
By (\ref{ML4.1A'}) and since $\rho(x d_1^{-1} c_2) = 1$, $q_0 = \rho(r q r^{-1})$, $\rho(d_1 x^{-1}) = c_2$ we have that 
	$$\rho(x s (d*w) s^{-1} x^{-1}) = \rho(x d_1^{-1} c_2 r^{-1} p q_0 r c_2^{-1} w d_1 x^{-1}) =$$
	$$\rho(r^{-1} p r q r^{-1} r c_2^{-1} w c_2) = \rho(r^{-1} p r q c_2^{-1} w c_2),$$	
so the identity among relations that follows from (\ref{ML4.1A2a}) is	
	$$q \centerdot c_2^{-1} w c_2 \equiv r^{-1} p^{-1} r \centerdot r^{-1} p r \centerdot q \centerdot c_2^{-1} w c_2,$$
which is basic, proving the first part of the lemma. If $d = u*v$ then $c_2 = 1$, thus $w' = w$.

The second part of the lemma follows by noting that $d = c_2 x a$ and $w = a^{-1} s (d*w) s^{-1} x^{-1} c_2^{-1}$, the words $a a^{-1}$ and $c_2^{-1} c_2$ are canceled in $d*w$ by virtue of Proposition \ref{puzo}, the word $a c_2 c_2^{-1} a^{-1}$ is canceled in $q*w'$ and that $a c_2 \neq 1$.

\smallskip
	
Now let $c_2 a = 1$; then $x \neq 1$, $d = d_1 = x$ , $q_0 = p^{-1} r d r^{-1}$ and $w = s (d*w) s^{-1} d^{-1}$. 

The word $q := d r^{-1} p^{-1} r$ is a cyclic permutation of $q_0$ and we have that
	$$\rho(wq) = \rho(s (d*w) s^{-1} d^{-1} d r^{-1} p^{-1} r) = \rho(s (d*w) s^{-1} r^{-1} p^{-1} r).$$
	
By (\ref{4.1'}) we have that $\rho(wd) = s (d*w) s^{-1}$, therefore $w*d = d*w$, so the last equality implies	
	\begin{equation} \label{ML4.1A2b} \rho(wq) = \rho(s (w*d) s^{-1} r^{-1} p^{-1} r).\end{equation}
	
By (\ref{ML4.1A''}) and since $q_0 = \rho((p^{-1} r) q (r^{-1} p))$we have that
		$$s (d*w) s^{-1} = \rho(w (r^{-1} p) q_0 p (p^{-1} r)) =$$
		$$\rho(w (r^{-1} p p^{-1} r) q (r^{-1} p) p (p^{-1} r)) = \rho(w q r^{-1} p r)$$
so the identity among relations that follows from (\ref{ML4.1A2b}) is	
	$$w \centerdot q  \equiv w \centerdot q \centerdot r^{-1} p r \centerdot r^{-1} p^{-1} r,$$
which is basic. By setting $w' := w$ this proves the first part of the lemma. The second part of the lemma follows from the fact that the non-empty word $d^{-1} d$ is canceled in both $w*$ and $w*q$. 

\medskip	

\bigskip

\textbf{1B)} $d = d_1 a = e_1 e_2 e_3$, $w = a^{-1} s (d*w) s^{-1} d_1^{-1}$, $\rho(dw) = d_1 s (d*w) s^{-1} d_1^{-1}$, $p = e_2 b$, $q_0 = b^{-1} e_3 e_1$, with $b, e_2, e_3 e_1 \neq 1$. If $d = u*v$ then $e_1 = 1$.

\smallskip

By (\ref{4.1}) and (\ref{4.B}) we have that
	\begin{equation} \label{ML4.1B} s (d*w) s^{-1} = \rho(d_1^{-1} e_1 p q_0 e_1^{-1} w d_1)\end{equation}
and that
	\begin{equation} \label{ML4.1B'} s (d*w) s^{-1} = \rho(d_1^{-1} e_3^{-1} e_2^{-1} p q_0 e_2 e_3 w d_1)\end{equation}
	
By (\ref{4.1}) and (\ref{4.B'}) we have that
	\begin{equation} \label{ML4.1B''} s (d*w) s^{-1} = \rho(d_1^{-1} e_1 e_2 b q_0 p b^{-1} e_2^{-1} e_1^{-1} w d_1).\end{equation}

By (\ref{4.1-1.5}) and (\ref{4.B}) we have that
	\begin{equation} \label{ML4.1B'''} \alpha (w*d) \alpha^{-1} = \rho(w e_1 p q_0 e_1^{-1}).\end{equation}


\smallskip

The word equation $d_1 a = e_1 e_2 e_3$ leads to three different possible solutions.

\bigskip

\textbf{1B1)}  \hspace{0.1cm}
\begin{tabular}{|c|c|c|}
	\hline
	\rule{0pt}{2.3ex}
	$e_1$ & $e_2$ & $e_3$ \\  
	\hline
\end{tabular}

\hspace{11.6mm}
\begin{tabular}{|c|c|}
	\hline 
	\rule{0pt}{2.3ex}
	$\,\,\,\,\,\,\, d_1 \,\,\,\,\,\,\hspace{0.6mm}$ & $a$ \\
	\hline
\end{tabular}

\medskip

There exists a word $x$ such that $d_1 = e_1 e_2 x$ and $e_3 = x a$. Thus $d_1^{-1} = x^{-1} e_2^{-1} e_1^{-1}$ and $w = a^{-1} s (d*w) s^{-1} x^{-1} e_2^{-1} e_1^{-1}$, $q_0 = b^{-1} x a e_1$. 

The words $w' := e_1^{-1} a^{-1} s (d*w) s^{-1} x^{-1} e_2^{-1} = \rho(e_1^{-1} w e_1)$ and $q := e_1 b^{-1} x a$ are cyclic permutations of $w$ and $q_0$ respectively and $q^{-1} = a^{-1} x^{-1} b e_1^{-1}$.

We have that
		$$\rho(w'p) = \rho(e_1^{-1} a^{-1} s (d*w) s^{-1} x^{-1} e_2^{-1} e_2 b) = \rho(e_1^{-1} a^{-1} s (d*w) s^{-1} x^{-1} b),$$
thus by (\ref{4.1-1.5}) we have that
	\begin{multline} \label{ML4.1B1} \rho(e_1 w'p e_1^{-1}) = \rho(a^{-1} s (d*w) s^{-1} x^{-1} b e_1^{-1}) = \\ \rho(a^{-1} s (d*w) s^{-1} a a^{-1} x^{-1} b e_1^{-1}) = \rho(\alpha (w*d) \alpha^{-1} q^{-1}) .\end{multline}
Since $q_0 = \rho(e_1^{-1} q e_1)$, by (\ref{ML4.1B'''}) we have that 
		$$\alpha (w*d) \alpha^{-1} = \rho(w e_1 p q_0 e_1^{-1}) = \rho(w e_1 p e_1^{-1} q e_1 e_1^{-1}) = \rho(w e_1 p e_1^{-1} q).$$
Since $\rho(e_1 w' e_1^{-1}) = w$, the identity among relations that follows from (\ref{ML4.1B1}) is	
	$$w \centerdot e_1 p e_1^{-1} \equiv w \centerdot e_1 p e_1^{-1} \centerdot q \centerdot q^{-1},$$
which is basic. By setting swapping $p$ and $q$ this proves the first part of the lemma. If $d = u*w$ then $e_1 = 1$, which implies that $w' = w$.

To prove the second part we note that the non-empty word $e_2^{-1} e_2$ is canceled in $w'*q$ and in $w*d$.

\bigskip

\textbf{1B2)} \hspace{0.1cm}
\begin{tabular}{|c|c|c|}
	\hline
	\rule{0pt}{2.3ex}
	$e_1$ & $e_2$ & $e_3$ \\  
	\hline
\end{tabular}

\hspace{11.6mm}
\begin{tabular}{|c|c|}
	\hline 
	\rule{0pt}{2.3ex}
	$\,\,\, d_1 \,\,\hspace{0.6mm}$ & $\,\,\,\, a \,\,\,\,$ \\
	\hline
\end{tabular}

\medskip

There exists a word $x$ such that $d_1 = e_1 x$, $e_2 = xy$, $a = y e_3$. Thus $d_1^{-1} = x^{-1} e_1^{-1}$, $a^{-1} = e_3^{-1} y^{-1}$ and then $w = e_3^{-1} y^{-1} s (d*w) s^{-1} x^{-1} e_1^{-1}$, $p = xyb$.

The words $w' := e_1^{-1} e_3^{-1} y^{-1} s (d*w) s^{-1} x^{-1} = \rho(e_1^{-1} w e_1)$ and $p_0 := ybx = \rho(x^{-1} p x)$ are cyclic permutations of $w$ and $p$ respectively. Moreover we have $p_0^{-1} = x^{-1} b^{-1} y^{-1}$ and setting $q := q_0$ we have
	$$\rho(qw') = \rho(b^{-1} e_3 e_1 e_1^{-1} e_3^{-1} y^{-1} s (d*w) s^{-1} x^{-1}) =$$
	$$\rho(qw') = \rho(b^{-1} y^{-1} s (d*w) s^{-1} x^{-1}),$$
so
	\begin{multline} \label{ML4.1B2} \rho(x^{-1} qw' x) = \rho(x^{-1} b^{-1} y^{-1} s (d*w) s^{-1}) = \\ \rho(p_0^{-1} s (d*w) s^{-1}) = \rho(x^{-1} p^{-1} x s (d*w) s^{-1}). \end{multline}
By (\ref{ML4.1B}) we have that 	
	$$s (d*w) s^{-1} = \rho(d_1^{-1} e_1 p q_0 e_1^{-1} w d_1) = \rho(x^{-1} e_1^{-1} e_1 p q e_1^{-1} w e_1 x) =$$
	$$\rho(x^{-1} p q e_1^{-1} w e_1 x) = \rho(x^{-1} p q x (x^{-1} e_1^{-1}) w (e_1 x))$$	
and since $\rho(x^{-1} w' x) = \rho((x^{-1} e_1^{-1}) w (e_1 x))$ the identity among relations that follows from (\ref{ML4.1B2}) is	
	$$x^{-1} q x \centerdot (x^{-1} e_1^{-1}) w (e_1 x) \equiv x^{-1} p^{-1} x \centerdot x^{-1} p x \centerdot x^{-1} q x \centerdot (x^{-1} e_1^{-1}) w (e_1 x),$$
which is basic, proving the first part of the lemma. If $d = u*w$ then $e_1 = 1$, which implies that $w' = w$.

The second part follows by noting that $d = e_1 x y e_3$ and $w = e_3^{-1} y^{-1} s (d*w) s^{-1} x^{-1} e_1^{-1}$, the words $e_3 e_3^{-1}$ and $e_1^{-1} e_1$ are canceled in $d*w$ by virtue of Proposition \ref{puzo}, the word $e_3 e_1 e_1^{-1} e_3^{-1}$ is canceled in $q*w'$ and that $e_3 e_1 \neq 1$.


\bigskip

\textbf{1B3)}  \hspace{0.1cm}
\begin{tabular}{|c|c|c|}
	\hline
	\rule{0pt}{2.3ex}
	$\,\, e_1 \,\,$ & $e_2$ & $e_3$ \\  
	\hline
\end{tabular}

\hspace{11.6mm}
\begin{tabular}{|c|c|}
	\hline 
	\rule{0pt}{2.3ex}
	$d_1$ & $\,\,\, \,\,\,\,\,\, a \,\,\,\,\,\, \,\,\hspace{0.6mm}$ \\
	\hline
\end{tabular}

\medskip

There exists a word $x$ such that $e_1 = d_1 x$ and $a = x e_2 e_3$, thus $a^{-1} = e_3^{-1} e_2^{-1}  x^{-1}$ and $w = e_3^{-1} e_2^{-1} x^{-1} s (d*w) s^{-1} d_1^{-1}$, $q_0 = b^{-1} e_3 d_1 x$.

Since $e_3 d_1 x = e_3 e_1 \neq 1$, then it is not possible that both $e_3 d_1$ and $x$ be equal to 1.

First let us assume that $e_3 d_1 \neq 1$. The words $q := x b^{-1} e_3 d_1$ and $w' := d_1^{-1} e_3^{-1} e_2^{-1} x^{-1} s (d*w) s^{-1} = \rho(d_1^{-1} w d_1)$ are cyclic permutations of $q_0$ and $w$ respectively. We have that 
	\begin{multline} \label{ML4.1B3} \rho(q w') = \rho(x b^{-1} e_3 d_1 d_1^{-1} e_3^{-1} e_2^{-1} x^{-1} s (d*w) s^{-1}) = \\
	\rho(x b^{-1} e_2^{-1} x^{-1} s (d*w) s^{-1}) = \rho(x p^{-1} x^{-1} s (d*w) s^{-1}).\end{multline}
Since $\rho(d_1^{-1} e_1) = x$, $\rho(x e_1^{-1}) = d_1^{-1}$ and $q_0 = \rho(x^{-1} q x)$, by (\ref{ML4.1B}) we have that 
	$$s (d*w) s^{-1} = \rho(d_1^{-1} e_1 p q_0 e_1^{-1} w d_1) =$$ 
	$$\rho(x p x^{-1} q x e_1^{-1} w d_1)  = \rho(x p x^{-1} q d_1^{-1} w d_1)$$
which implies that the identity among relations that follows from (\ref{ML4.1B3}) is	
	$$q \centerdot d_1^{-1} w d_1 \equiv x^{-1} p^{-1} x \centerdot x^{-1} p x \centerdot q \centerdot d_1^{-1} w d_1,$$
which is basic, proving the first part of the lemma. If $d = u*w$ then $e_1 = 1$ thus $d_1 = 1$, which implies that $w' = w$.

For the second part we note that the non-empty word $e_3 d_1 d_1^{-1} e_3^{-1}$ is canceled in $q*w'$ and $e_3 d_1 \neq 1$. Since $d = d_1 x e_2 e_3$ and $w = e_3^{-1} e_2^{-1} x^{-1} s (d*w) s^{-1} d_1^{-1}$, then $e_3 e_3^{-1}$ and $d_1^{-1} d_1$ are canceled in $d*w$ by Proposition \ref{puzo}.

\smallskip

Now let us assume that $e_3 d_1 = 1$, that is $x \neq 1$. We have that $w = e_2^{-1} x^{-1} s (d*w) s^{-1}$, $q_0 = b^{-1} x$. The word $w' := x^{-1} s (d*w) s^{-1} e_2^{-1} = \rho(e_2 w e_2^{-1})$ is a cyclic permutation of $w$ and by setting $q := q_0$ we have that 
	\begin{multline} \label{ML4.1B3'} \rho(q w') = \rho(b^{-1} x x^{-1} s (d*w) s^{-1} e_2^{-1}) = \rho(b^{-1} s (d*w) s^{-1} e_2^{-1}) = \\
	\rho(b^{-1} e_2^{-1} e_2 s (d*w) s^{-1} e_2^{-1}) = \rho(p^{-1} e_2 s (d*w) s^{-1} e_2^{-1}).\end{multline}
By (\ref{ML4.1B'}) we have that
	$$\rho(e_2 s (d*w) s^{-1} e_2^{-1}) = \rho(e_2 e_2^{-1} p q_0 e_2 w e_2^{-1}) = \rho(p q e_2 w e_2^{-1}),$$
so the identity among relations that follows from (\ref{ML4.1B3'}) is
	$$q \centerdot e_2 w e_2^{-1} \equiv p^{-1} \centerdot p \centerdot q \centerdot e_2 w e_2^{-1},$$
which is basic, proving the first part of the lemma. We observe that $d \neq u*w$, because if $d = u*w$ then $e_1 = 1$ and since $e_3 = 1$ then we would have that $e_3 e_1 = 1$, which is impossible.

For the second part we note that the non-empty word $x x^{-1}$ is canceled in $q*w'$ and it is also canceled in $d*w$ by Proposition \ref{puzo} since $d = x e_2$ and $w = e_2^{-1} x^{-1} s (d*w) s^{-1}$.



\medskip

\bigskip

\textbf{2A)} $d = t d_1 a = c_2 c_1$, $w = a^{-1} w_1 t^{-1}$, $\rho(dw) = t d_1 w_1 t^{-1}$, $d*w = d_1 w_1$, $w*d = w_1 d_1$, $q_0 = p^{-1} r c_1 c_2 r^{-1}$, with $d_1, w_1, ta \neq 1$. If $d = u*v$ then $c_2 = 1$.

By (\ref{4.2}) and (\ref{4.A}) we have that 
	\begin{equation} \label{ML4.2A} d*w = \rho(t^{-1} c_1^{-1} r^{-1} p q_0 r c_1 w t)\end{equation}
and that
	\begin{equation} \label{ML4.2A'} d*w = \rho(t^{-1} c_2 r^{-1} p q_0 r c_2^{-1} w t).\end{equation}

By (\ref{4.2'}) and (\ref{4.A'}) we have that
	\begin{equation} \label{ML4.2A''} w*d = \rho\big(a w (c_2 r^{-1} p) q_0 p (p^{-1} r c_2^{-1}) a^{-1}\big)\end{equation}

\smallskip

The word equation $c_2 c_1 = t d_1 a$ leads to three different possible solutions.

\bigskip

\textbf{2A1)} \hspace{0.1cm}
\begin{tabular}{|c|c|}
	\hline
	\rule{0pt}{2.3ex}
	$\,\,\,\,\,\, c_2 \,\,\,\,\,\,$ & $c_1$ \\  
	\hline
\end{tabular}

\hspace{11.8mm}
\begin{tabular}{|c|c|c|}
	\hline 
	\rule{0pt}{2.3ex}
	$t$ & $d_1$ & $\,\, a \,\,\hspace{0.5mm}$ \\
	\hline
\end{tabular}

\medskip

There exists a word $x$ such that $c_2 = t d_1 x$, $a = x c_1$, so $a^{-1} = c_1^{-1} x^{-1}$ and $q_0 = p^{-1} r c_1 t d_1 x r^{-1}$, $w = c_1^{-1} x^{-1} w_1 t^{-1}$. We have that $ta = t x c_1 \neq 1$, so if $c_1 t = 1$ then $x \neq 1$.

First suppose that $c_1 t \neq 1$. The words $q := d_1 x r^{-1} p^{-1} r c_1 t$ and $w' := t^{-1} c_1^{-1} x^{-1} w_1 = \rho(t^{-1} w t)$ are cyclic permutations of $q_0$ and $w$ respectively and we have that
	$$\rho(qw') = \rho(d_1 x r^{-1} p^{-1} r c_1 t t^{-1} c_1^{-1} x^{-1} w_1) = \rho(d_1 x r^{-1} p^{-1} r x^{-1} w_1),$$
therefore since $w_1 d_1 = \rho(d_1^{-1} (d*w) d_1)$ we have that 
	\begin{multline} \label{ML4.2A1}\rho(d_1^{-1} qw' d_1) = \rho(x r^{-1} p^{-1} r x^{-1} w_1 d_1) = \\ \rho(x r^{-1} p^{-1} r x^{-1} d_1^{-1} (d*w) d_1).\end{multline}
Since $\rho(d_1^{-1} t^{-1} c_2) = x$, $\rho(x c_2^{-1}) = \rho(d_1^{-1} t^{-1})$ and $q_0 = \rho((r x^{-1} d_1^{-1}) q (d_1 x r^{-1}))$, by (\ref{ML4.2A'}) we have that
	$$\rho(d_1^{-1} (d*w) d_1) = \rho(d_1^{-1} t^{-1} c_2 r^{-1} p q_0 r c_2^{-1} w t d_1) =$$
	$$\rho(x r^{-1} p r x^{-1} d_1^{-1} q d_1 x c_2^{-1} w t d_1) = \rho((x r^{-1}) p (r x^{-1}) d_1^{-1} q d_1 (d_1^{-1} t^{-1}) w (t d_1)),$$
so the identity among relations that follows from (\ref{ML4.2A1}) is
	$$d_1^{-1} q d_1 \centerdot (d_1^{-1} t^{-1}) w (t d_1) \equiv$$
	$$(x r^{-1}) p^{-1} (r x^{-1}) \centerdot (x r^{-1}) p (r x^{-1}) \centerdot d_1^{-1} q d_1 \centerdot (d_1^{-1} t^{-1}) w (t d_1)),$$
which is basic, proving the first part of the lemma. If $d = u*v$ then $c_2 = 1$, thus $t = 1$ which implies that $w' = w$.

For the second part we note that $d = t d_1 x c_1$ and $w = c_1^{-1} x^{-1} w_1 t^{-1}$, the words $c_1 c_1^{-1}$ and $t^{-1} t$ are canceled in $d*w$ by virtue of Proposition \ref{puzo}, the word $c_1 t t^{-1} c_1^{-1}$ is canceled in $q*w'$ and $c_1 t \neq 1$.	
	
\smallskip

Now suppose that $c_1, t = 1$; then $q_0 = p^{-1} r d_1 x r^{-1}$ and $w = x^{-1} w_1$. The word $q := r^{-1} p^{-1} r d_1 x$ is a cyclic permutation of $q_0$ and we have
	\begin{multline} \label{ML4.2A1'} \rho(qw) = \rho(r^{-1} p^{-1} r d_1 x x^{-1} w_1) = \\ \rho(r^{-1} p^{-1} r d_1 w_1) = \rho(r^{-1} p^{-1} r (d*w)).\end{multline}
Since $q_0 = \rho(r q r^{-1})$, by (\ref{ML4.2A}) we have that
	$$d*w = \rho(r^{-1} p q_0 r w) = \rho(r^{-1} p r q r^{-1} r w) = \rho(r^{-1} p r q w)$$
so the identity among relations that follows from (\ref{ML4.2A1'}) is
	$$q \centerdot w \equiv r^{-1} p^{-1} r \centerdot r^{-1} p r \centerdot q \centerdot w,$$	
which is basic. By setting $w' := w$ this proves the first part of the lemma. The second part follows from the fact that $d = d_1 x$, so the non-empty word $x x^{-1}$ is canceled in both $d*w$ and $q*w'$.

\bigskip

\textbf{2A2)} \hspace{0.1cm}
\begin{tabular}{|c|c|}
	\hline
	\rule{0pt}{2.3ex}
	$\,\, c_2 \,\,$ & $\,\, c_1 \,\,$ \\  
	\hline
\end{tabular}

\hspace{11.8mm}
\begin{tabular}{|c|c|c|}
	\hline 
	\rule{0pt}{2.3ex}
	$t$ & $d_1$ & $a \hspace{0.5mm}$ \\
	\hline
\end{tabular}

\medskip

There exist words $d_2, d_3$ such that $c_2 = t d_2$, $d_1 = d_2 d_3$, $c_1 = d_3 a$, so $q_0 = p^{-1} r d_3 a t d_2 r^{-1}$, $d*w = d_2 d_3 w_1$.

The words $q := d_2 r^{-1} p^{-1} r d_3 a t$ and $w' := t^{-1} a^{-1} w_1 = \rho(t^{-1} w t)$ are cyclic permutations of $q_0$ and $w$ respectively and
	$$\rho(qw') = \rho(d_2 r^{-1} p^{-1} r d_3 a t t^{-1} a^{-1} w_1) = \rho(d_2 r^{-1} p^{-1} r d_3 w_1),$$
so since $d_3 w_1 d_2 = \rho(d_2^{-1} (d*w) d_2)$ we have that
	\begin{equation} \label{ML4.2A2} \rho(d_2^{-1} q w' d_2) = \rho(r^{-1} p^{-1} r d_3 w_1 d_2) = \rho(r^{-1} p^{-1} r d_2^{-1} (d*w) d_2).\end{equation}
Since $\rho(d_2^{-1} t^{-1} c_2) = 1$, $c_2 = t d_2$ and $q_0 = \rho(r d_2^{-1} q d_2 r^{-1})$, by (\ref{ML4.2A'}) we have that 
	$$\rho(d_2^{-1} (d*w) d_2) = \rho(d_2^{-1} t^{-1} c_2 r^{-1} p q_0 r c_2^{-1} w t d_2) =$$
	$$\rho(r^{-1} p r d_2^{-1} q d_2 r^{-1} r c_2^{-1} w c_2) = \rho(r^{-1} p r d_2^{-1} q d_2 c_2^{-1} w c_2),$$
so the identity among relations that follows from (\ref{ML4.2A2}) is
	$$d_2^{-1} q d_2 \centerdot (d_2^{-1} t^{-1}) w (t d_2) \equiv r^{-1} p^{-1} r  \centerdot r^{-1} p r \centerdot d_2^{-1} q d_2 \centerdot c_2^{-1} w c_2,$$	
which is basic because $c_2 = t d_2$, proving the first part of the lemma. If $d = u*v$ then $c_2 = 1$, thus $t = 1$ which implies that $w' = w$.

The second part follows by noting that since $d = t d_1 a$ and $w = a^{-1} w_1 t^{-1}$, the words $a a^{-1}$ and $t^{-1} t$ are canceled in $d*w$ by virtue of Proposition \ref{puzo}, the word $a t t^{-1} a^{-1}$ is canceled in $q*w'$ and $ta \neq 1$.

\bigskip

\medskip

\textbf{2A3)} \hspace{0.1cm}
\begin{tabular}{|c|c|}
	\hline
	\rule{0pt}{2.3ex}
	$c_2$ & $\,\,\,\,\,\,\, c_1 \,\,\,\,\,\,\,$ \\  
	\hline
\end{tabular}

\hspace{11.8mm}
\begin{tabular}{|c|c|c|}
	\hline 
	\rule{0pt}{2.3ex}
	$\,\,\, t \,\,\,$ & $d_1$ & $a \hspace{0.5mm}$ \\
	\hline
\end{tabular}

\medskip

There exists a word $x$ such that $t = c_2 x$, $c_1 = x d_1 a$, so $t^{-1} = x^{-1} c_2^{-1}$ and $q_0 = p^{-1} r x d_1 a c_2 r^{-1}$, $w = a^{-1} w_1 x^{-1} c_2^{-1}$.

We have that $ta = c_2 x a \neq 1$, so if $a c_2 = 1$ then $x \neq 1$.

First suppose that $a c_2 \neq 1$. The words $q := r^{-1} p^{-1} r x d_1 a c_2$ and $w' := c_2^{-1} a^{-1} w_1 x^{-1} = \rho(c_2^{-1} w c_2)$ are cyclic permutations of $q_0$ and $w$ respectively and
	\begin{multline} \label{ML4.2A3}\rho(qw') = \rho(r^{-1} p^{-1} r x d_1 a c_2 c_2^{-1} a^{-1} w_1 x^{-1}) = \\ \rho(r^{-1} p^{-1} r x d_1 w_1 x^{-1}) = \rho(r^{-1} p^{-1} r x (d*w) x^{-1}).\end{multline}
Since $t^{-1} = x^{-1} c_2^{-1}$, $\rho(x t^{-1} c_2) = 1$ and $q_0 = \rho(r q r^{-1})$, by (\ref{ML4.2A'}) we have that
	$$\rho(x (d*w) x^{-1}) = \rho(x t^{-1} c_2 r^{-1} p q_0 r c_2^{-1} w t x^{-1}) =
	\rho(r^{-1} p r q c_2^{-1} w c_2),$$	
so the identity among relations that follows from (\ref{ML4.2A3}) is
	$$q  \centerdot c_2^{-1} w c_2 \equiv r^{-1} p^{-1} r \centerdot r^{-1} p r \centerdot q  \centerdot c_2^{-1} w c_2,$$	
which is basic, proving the first part of the lemma. If $d = u*v$ then $c_2 = 1$, which implies that $w' = w$.

The second part follows by noting that $d = c_2 x d_1 a$ and $w = a^{-1} w_1 x^{-1} c_2^{-1}$, so the words $a a^{-1}$ and $c_2^{-1} c_2$ are canceled in $d*w$ by virtue of Proposition \ref{puzo}, the word $a c_2 c_2^{-1} a^{-1}$ is canceled in $q*w'$ and $a c_2 \neq 1$.

\smallskip

Now suppose that $a c_2 = 1$; then $q_0 = p^{-1} r x d_1 r^{-1}$ and $w = w_1 x^{-1}$. The word $q := x d_1 r^{-1} p^{-1} r$ is a cyclic permutations of $q_0$ and we have that
	\begin{multline} \label{ML4.2A3'} \rho(w q) = \rho(w_1 x^{-1} x d_1 r^{-1} p^{-1} r) = \\ \rho(w_1 d_1 r^{-1} p^{-1} r) = \rho((w*d) \, r^{-1} p^{-1} r).\end{multline}
By (\ref{ML4.2A''}) we have that
	$$w*d = \rho\big(w (r^{-1} p) q_0 p (p^{-1} r)\big)$$
and since $q_0 = \rho((p^{-1} r) q (r^{-1} p))$ then 
	$$w*d = \rho\big(w (r^{-1} p) (p^{-1} r) q (r^{-1} p) p (p^{-1} r)\big) =$$
	$$\rho\big(w q (r^{-1} p) p (p^{-1} r)\big) = \rho(w q r^{-1} p r),$$
so the identity among relations that follows from (\ref{ML4.2A3'}) is
	$$w  \centerdot q \equiv w \centerdot q \centerdot r^{-1} p r  \centerdot r^{-1} p^{-1} r,$$	
which is basic. By setting $w' := w$ this proves the first part of the lemma. The second part follows from the fact that $w = w_1 x^{-1}$ and $d = x d_1$, so the words $x^{-1} x$ is canceled in $w*d$ and in $w'*q$.

\bigskip

\textbf{2B)} $d = t d_1 a = e_1 e_2 e_3$, $w = a^{-1} w_1 t^{-1}$, $d*w = d_1 w_1$,  $w*d = w_1 d_1$, $p = e_2 b$, $q_0 = b^{-1} e_3 e_1$, with $d_1, w_1, ta, b, e_2, e_3 e_1\neq 1$. If $d = u*v$ then $e_1 = 1$.

\smallskip

From (\ref{4.2}) and (\ref{4.B}) we have that
	\begin{equation} \label{ML4.2B} d*w = \rho(t^{-1} e_1 p q_0 e_1^{-1} w t).\end{equation}
From (\ref{4.2}) and (\ref{4.B'}) we have also that
	\begin{equation} \label{ML4.2B'} d*w = \rho(t^{-1} e_3^{-1} b q_0 p b^{-1} e_3 w t) \end{equation}
and that
	\begin{equation} \label{ML4.2B''} d*w = \rho(t^{-1} e_1 e_2 b q_0 p b^{-1} e_2^{-1} e_1^{-1} w t). \end{equation}
From (\ref{4.2'}) and (\ref{4.B}) we have that 
	\begin{equation} \label{ML4.2Bv} w*d = \rho(a w e_1 p q_0 e_1^{-1} a^{-1}) \end{equation}
and that
	\begin{equation} \label{ML4.2B'''} w*d = \rho(a w e_3^{-1} e_2^{-1} p q_0 e_2 e_3 a^{-1}).\end{equation}

The word equation $e_1 e_2 e_3 = t d_1 a$ leads to six different possible solutions.

\bigskip

\textbf{2B1)}  \hspace{0.1cm}
\begin{tabular}{|c|c|c|}
	\hline
	\rule{0pt}{2.3ex}
	$\,\,\,\,\,\, e_1 \,\,\,\,\,\,$ & $e_2$ & $e_3$ \\  
	\hline
\end{tabular}

\hspace{11.6mm}
\begin{tabular}{|c|c|c|}
	\hline 
	\rule{0pt}{2.3ex}
	$t$ & $d_1$ & $\,\,\,\,\,\,\,\, a \,\,\,\,\,\,\,\,\, \hspace{0.1mm}$ \\
	\hline
\end{tabular}

\medskip

There exists a word $x$ such that $e_1 = t d_1 x$, $a = x e_2 e_3$, so $a^{-1} = e_3^{-1} e_2^{-1} x^{-1}$ and $w = e_3^{-1} e_2^{-1} x^{-1} w_1 t^{-1}$, $q_0 = b^{-1} e_3 t d_1 x$, therefore $q_0^{-1} = x^{-1} d_1^{-1} t^{-1} e_3^{-1} b$. 

The word  $w' := x^{-1} w_1 t^{-1} e_3^{-1} e_2^{-1} = \rho(e_2 e_3 w e_3^{-1} e_2^{-1})$ is a cyclic permutation of $w$ and we have that
	\begin{multline} \label{ML4.2B1} \rho(w' p) = \rho(x^{-1} w_1 t^{-1} e_3^{-1} e_2^{-1} e_2 b) = 
	\rho(x^{-1} w_1 t^{-1} e_3^{-1} b) = \\
	\rho(x^{-1} w_1 d_1 x x^{-1} d_1^{-1} t^{-1} e_3^{-1} b) = \rho(x^{-1} (w*d) x \, q_0^{-1}).\end{multline}
By (\ref{ML4.2B'''}) we have that
	$$\rho(x^{-1} (w*d) x) = \rho(x^{-1} a w e_3^{-1} e_2^{-1} p q_0 e_2 e_3 a^{-1} x) = $$
	$$\rho(x^{-1} x e_2 e_3 w e_3^{-1} e_2^{-1} p q_0 e_2 e_3 e_3^{-1} e_2^{-1} x^{-1} x) = \rho(e_2 e_3 w e_3^{-1} e_2^{-1} p q_0),$$
so the identity among relations that follows from (\ref{ML4.2B1}) is
	$$(e_2 e_3) w (e_3^{-1} e_2^{-1})  \centerdot p \equiv (e_2 e_3) w (e_3^{-1} e_2^{-1})  \centerdot p \centerdot q_0 \centerdot q_0^{-1},$$	
which is basic. By setting $q := p$ and $p := q_0$ this proves the first part of the lemma.	

For the second part we observe that the non-empty word $e_2^{-1} e_2$ is canceled in $w'*q$ and that since $w = e_3^{-1} e_2^{-1} x^{-1} w_1 t^{-1}$ and $d = t d_1 x e_2 e_3$, the word  $e_2 e_2^{-1}$ is canceled in $w*d$ by virtue of Proposition \ref{puzo}. 
	
\smallskip

Now let $d = u*v$; then $e_1 = 1$, thus $t, d_1, x = 1$. This implies that $e_3 \neq 1$ and that $w = e_3^{-1} e_2^{-1} w_1$, $q_0 = b^{-1} e_3$, $d*w = w_1$, therefore
	\begin{multline} \label{ML4.2B1'} \rho(q_0 w) = \rho(b^{-1} e_3 e_3^{-1} e_2^{-1} w_1) = \rho(b^{-1} e_2^{-1} w_1) = \rho(p^{-1}(d*w)).\end{multline}
By (\ref{ML4.2B}) we have that $d*w = \rho(p q_0 w )$, so the identity among relations that follows from (\ref{ML4.2B1'}) is
	$$q_0  \centerdot w \equiv p^{-1} \centerdot p \centerdot q_0 \centerdot w,$$	
which is basic. By setting $w' := w$ this proves the first part of the lemma.	

For the second part we observe that the non-empty word $e_3 e_3^{-1}$ is canceled in $d*w$ and in $q*w'$.

\bigskip

\textbf{2B2)}  \hspace{0.1cm}
\begin{tabular}{|c|c|c|}
	\hline
	\rule{0pt}{2.3ex}
	$\,\, e_1 \,\,$ & $e_2$ & $e_3$ \\  
	\hline
\end{tabular}

\hspace{11.6mm}
\begin{tabular}{|c|c|c|}
	\hline 
	\rule{0pt}{2.3ex}
	$t$ & $d_1$ & $\,\,\,\,\, a \,\,\,\,\hspace{0.1mm}$ \\
	\hline
\end{tabular}

\medskip

There exist words $d_2, d_3, a_1$ such that $e_1 = t d_2$, $d_1 = d_2 d_3$, $e_2 = d_3 a_1$, $a = a_1 e_3$. Thus $a^{-1} = e_3^{-1} a_1^{-1}$ and $p = d_3 a_1 b$, $q_0 = b^{-1} e_3 t d_2$, $w = e_3^{-1} a_1^{-1} w_1 t^{-1}$, $p^{-1} = b^{-1} a_1^{-1} d_3^{-1}$, $d*w = d_2 d_3 w_1$.

We have that $ta = t a_1 e_3$ and since $ta \neq 1$ it is not possible that both $a_1$ and $e_3 t$ be equal to 1.

First we suppose that $e_3 t \neq 1$. The words $q := d_2 b^{-1} e_3 t$ and $w' := t^{-1} e_3^{-1} a_1^{-1} w_1 = \rho(t^{-1} w t)$ are cyclic permutations of $q_0$ and $w$ respectively. 

We have that 
	\begin{multline} \label{ML4.2B2} \rho(qw') = \rho(d_2 b^{-1} e_3 t t^{-1} e_3^{-1} a_1^{-1} w_1) = \rho(d_2 b^{-1} a_1^{-1} w_1) = \\
	\rho(d_2 b^{-1} a_1^{-1} d_3^{-1} d_2^{-1} d_2 d_3 w_1) = \rho(d_2 p^{-1} d_2^{-1} (d*w)).\end{multline}
Since $\rho(t^{-1} e_1) = d_2$, $\rho(d_2 e_1^{-1}) = t^{-1}$ and $q_0 = \rho(d_2^{-1} q d_2)$, by (\ref{ML4.2B}) we have that 
		$$d*w = \rho(t^{-1} e_1 p q_0 e_1^{-1} w t) = \rho(d_2 p d_2^{-1} q d_2 e_1^{-1} w t) = \rho(d_2 p d_2^{-1} q t^{-1} w t),$$
so the identity among relations that follows from (\ref{ML4.2B2}) is
	$$q  \centerdot t^{-1} w t \equiv d_2 p^{-1} d_2^{-1} \centerdot d_2 p d_2^{-1} \centerdot q  \centerdot t^{-1} w t,$$	
which is basic, proving the first part of the lemma. If $d = u*v$ then $e_1 = 1$, thus $t = 1$ which implies that $w' = w$.

The second part follows by noting that $d = t d_1 a_1 e_3$, $w = e_3^{-1} a_1^{-1} w_1 t^{-1}$, the words $e_3 e_3^{-1}$ and $t^{-1} t$ are canceled in $d*w$ by virtue of Proposition \ref{puzo}, the word $e_3 t t^{-1} e_3^{-1}$ is canceled in $q*w'$ and $e_3 t \neq 1$.	
	
\smallskip

Now suppose that $e_3 t = 1$, that is $a_1 \neq 1$. Then $p = d_3 a_1 b$, $q_0 = b^{-1} d_2$, $w = a_1^{-1} w_1$ and we have that $q_0^{-1} = d_2^{-1} b$. The word $p' := b d_3 a_1 = \rho(b p b^{-1})$ is a cyclic permutation of $p$ and we have
	$$\rho(p' w) = \rho(b d_3 a_1 a_1^{-1} w_1) = \rho(b d_3 w_1),$$
therefore since $d_3 w_1 d_2 = d_2^{-1} (d*w) d_2$ then
	\begin{equation} \label{ML4.2B2'} \rho(d_2^{-1} p' w d_2) = \rho(d_2^{-1} b d_3 w_1 d_2) = \rho(q_0^{-1} d_2^{-1} (d*w) d_2).\end{equation}
Since $q_0 = b^{-1} d_2$, by (\ref{ML4.2B'}) we have that
	$$\rho(d_2^{-1} (d*w) d_2) = \rho(d_2^{-1} b q_0 p b^{-1} w d_2) = \rho(d_2^{-1} b q_0 b^{-1} d_2 d_2^{-1} b p b^{-1} d_2 d_2^{-1} w d_2) =$$	
	$$\rho(q_0^{-1} q_0 q_0 q_0^{-1} p q_0 d_2^{-1} w d_2) = \rho(q_0 q_0^{-1} p q_0 d_2^{-1} w d_2),$$
so the identity among relations that follows from (\ref{ML4.2B2'}) is
	$$q_0^{-1} p q_0  \centerdot d_2^{-1} w d_2 \equiv q_0^{-1} \centerdot q_0 \centerdot q_0^{-1} p q_0  \centerdot d_2^{-1} w d_2,$$	
which is basic. By setting $q := p'$, $p := q_0$ and $w' := w$ this proves the first part of the lemma.

For the second part, we observe that the non-empty word $a_1 a_1^{-1}$ is canceled in $q*w'$ and since $d = t d_1 a_1$ and $w = a_1^{-1} w_1$, it is also canceled in $d*w$.

\bigskip

\textbf{2B3)}  \hspace{0.1cm}
\begin{tabular}{|c|c|c|}
	\hline
	\rule{0pt}{2.3ex}
	$\,\, e_1 \,\,$ & $e_2$ & $\,\, e_3 \,$ \\  
	\hline
\end{tabular}

\hspace{11.6mm}
\begin{tabular}{|c|c|c|}
	\hline 
	\rule{0pt}{2.3ex}
	$t$ & $\,\,\,\,\,\, d_1 \,\,\,\,\,\,\hspace{0.1mm}$ & $a$ \\
	\hline
\end{tabular}

\medskip 

There exist words $d_2, d_3$ such that $e_1 = t d_2$, $d_1 = d_2 e_2 d_3$, $e_3 = d_3 a$, so $q_0 = b^{-1} d_3 a t d_2$ and $d*w = d_2 e_2 d_3 w_1$. Moreover $p = e_2 b$ and thus $p^{-1} = b^{-1} e_2^{-1}$.

The words $q := d_2 b^{-1} d_3 a t$ and $w' := t^{-1} a^{-1} w_1 = \rho(t^{-1} w t)$ are cyclic permutations of $q_0$ and $w$ respectively and we have  
	\begin{multline} \label{ML4.2B3} \rho(qw') = \rho(d_2 b^{-1} d_3 a t t^{-1} a^{-1} w_1) = \rho(d_2 b^{-1} d_3 w_1) = \\
	\rho(d_2 b^{-1} e_2^{-1} d_2^{-1} d_2 e_2 d_3 w_1) = \rho(d_2 p^{-1} d_2^{-1} (d*w)).\end{multline}
Since $\rho(t^{-1} e_1) = d_2$, $\rho(d_2 e_1^{-1}) = t^{-1}$ and $q_0 = \rho(d_2^{-1} q d_2)$, by (\ref{ML4.2B}) we have that 
	$$d*w = \rho(t^{-1} e_1 p q_0 e_1^{-1} w t) = \rho(d_2 p d_2^{-1} q d_2 e_1^{-1} w t) = \rho(d_2 p d_2^{-1} q t^{-1} w t),$$
so the identity among relations that follows from (\ref{ML4.2B3}) is
	$$q  \centerdot t^{-1} w t \equiv d_2 p^{-1} d_2^{-1} \centerdot d_2 p d_2^{-1} \centerdot q  \centerdot t^{-1} w t,$$	
which is basic. By setting $w' := w$, this proves the first part of the lemma. If $d = u*v$ then $e_1 = 1$, thus $t = 1$ which implies that $w' = w$.

The second part follows by noting that $a t \neq 1$, the non-empty word $a t t^{-1} a^{-1}$ is canceled in $q*w'$ and since $d = t d_1 a$ and $w = a^{-1} w_1 t^{-1}$, the words $a a^{-1}$ and $t^{-1} t$ are canceled in $d*w$ by virtue of Proposition \ref{puzo}.

\bigskip

\textbf{2B4)}  \hspace{0.1cm}
\begin{tabular}{|c|c|c|}
	\hline
	\rule{0pt}{2.3ex}
	$\,\, e_1 \,\,$ & $\,\,\, e_2 \,\,\,$ & $e_3$ \\  
	\hline
\end{tabular}

\hspace{11.6mm}
\begin{tabular}{|c|c|c|}
	\hline 
	\rule{0pt}{2.3ex}
	$\,\,\,\,\, t \,\,\,\,\,$ & $d_1$ & $\,\,\, a \,\,\hspace{0.1mm}$ \\
	\hline
\end{tabular}

\medskip 

There exist words $t_1, a_1$ such that $t = e_1 t_1$, $e_2 = t_1 d_1 a_1$, $a = a_1 e_3$. Thus $t^{-1} = t_1^{-1} e_1^{-1}$, $a^{-1} = e_3^{-1} a_1^{-1}$ and then $p = t_1 d_1 a_1 b$, $w = e_3^{-1} a_1^{-1} w_1 t_1^{-1} e_1^{-1}$.

The words $p_0 := d_1 a_1 b t_1 = \rho(t_1^{-1} p t_1)$ and $w' := e_1^{-1} e_3^{-1} a_1^{-1} w_1 t_1^{-1} = \rho(e_1^{-1} w e_1)$ are cyclic permutations of $p$ and $w$ respectively and $p_0^{-1} = t_1^{-1} b^{-1} a_1^{-1} d_1^{-1}$. By setting $q := q_0$ we have that
	$$\rho(qw') = \rho(b^{-1} e_3 e_1 e_1^{-1} e_3^{-1} a_1^{-1} w_1 t_1^{-1}) = \rho(b^{-1} a_1^{-1} w_1 t_1^{-1}) =$$
	$$\rho(b^{-1} a_1^{-1} d_1^{-1} d_1 w_1 t_1^{-1}) = \rho(b^{-1} a_1^{-1} d_1^{-1} (d*w) t_1^{-1}),$$
so	
	\begin{equation} \label{ML4.2B4} \rho(t_1^{-1} qw' t_1) = \rho(t_1^{-1} b^{-1} a_1^{-1} d_1^{-1} (d*w)) = \rho(p_0^{-1} (d*w)).\end{equation}
Since $\rho(t^{-1} e_1) = t_1^{-1}$ and $t = e_1 t_1$, by (\ref{ML4.2B}) we have that 
	$$d*w = \rho(t^{-1} e_1 p q e_1^{-1} w e_1 t_1) = \rho(t_1^{-1} p t_1 \, t_1^{-1} q t_1 \, t_1^{-1} e_1^{-1} w e_1 t_1),$$
so the identity among relations that follows from (\ref{ML4.2B4}) is
	$$t_1^{-1} q t_1  \centerdot (t_1^{-1} e_1^{-1}) w (e_1 t_1) \equiv t_1^{-1} p^{-1} t \centerdot t_1^{-1} p t \centerdot t_1^{-1} q t_1  \centerdot (t_1^{-1} e_1^{-1}) w (e_1 t_1),$$		
which is basic, proving the first part of the lemma. If $d = u*v$ then $e_1 = 1$, which implies that $w' = w$.

The second part follows by noting that $e_3 e_1 \neq 1$, the word $e_3 e_1 e_1^{-1} e_3^{-1}$ is canceled in $q*w'$, $d = e_1 t_1 d_1 a_1 e_3$, $w = e_3^{-1} a_1^{-1} w_1 t_1^{-1} e_1^{-1}$, so the words $e_3 e_3^{-1}$ and $e_1^{-1} e_1$ are canceled in $d*w$ by virtue of Proposition \ref{puzo}.

\bigskip

\textbf{2B5)} \hspace{0.1cm}
\begin{tabular}{|c|c|c|}
	\hline
	\rule{0pt}{2.3ex}
	$\,\, e_1 \,\,$ & $\,\,\, e_2 \,\,\,$ & $\, e_3 \,$ \\  
	\hline
\end{tabular}

\hspace{11.6mm}
\begin{tabular}{|c|c|c|}
	\hline 
	\rule{0pt}{2.3ex}
	$\,\,\,\,\, t \,\,\,\,\,$ & $\,\,\,\, d_1 \,\,\,\hspace{0.1mm}$ & $a$ \\
	\hline
\end{tabular}

\medskip 

There exist words $x, d_2, d_3$ such that $t = e_1 x$, $e_2 = x d_2$, $d_1 = d_2 d_3$, $e_3 = d_3 a$. So $t^{-1} = x^{-1} e_1^{-1}$ and $p = x d_2 b$, $q_0 = b^{-1} d_3 a e_1$, $d = e_1 x d_2 d_3 a$, $w = a^{-1} w_1 x^{-1} e_1^{-1}$, $d*w = d_2 d_3 w_1$.

Since $ta = e_1 x a$ and $ta \neq 1$ then it is not possible that both $e_1 a$ and $x$ be equal to 1.

First suppose that $e_1 a \neq 1$, which implies that $d \neq u*v$. The words $w' := e_1^{-1} a^{-1} w_1 x^{-1} = \rho(e_1^{-1} w e_1)$ and $p_0 := d_2 b x = \rho(x^{-1} p x)$ are cyclic permutations of $w$ and $p$ respectively and $p_0^{-1} = x^{-1} b^{-1} d_2^{-1}$. By setting $q := q_0$ we have
	$$\rho(qw') = \rho(b^{-1} d_3 a e_1 e_1^{-1} a^{-1} w_1 x^{-1}) = \rho(b^{-1} d_3 w_1 x^{-1}) =$$
	$$\rho(b^{-1} d_2^{-1} d_2 d_3 w_1 x^{-1}) = \rho(b^{-1} d_2^{-1} (d*w) x^{-1}),$$
so	
	\begin{equation} \label{ML4.2B5} \rho(x^{-1} q w' x) = \rho(x^{-1} b^{-1} d_2^{-1} (d*w)) = \rho(p_0^{-1} (d*w)).\end{equation}
Since $\rho(t^{-1} e_1) = x^{-1}$ and $x^{-1} e_1^{-1} = t^{-1}$, by (\ref{ML4.2B}) we have that
	$$d*w = \rho(t^{-1} e_1 p q_0 e_1^{-1} w t) = \rho(x^{-1} p q e_1^{-1} w t) =$$
	$$\rho(x^{-1} p x \, x^{-1} q  x \, x^{-1} e_1^{-1} w t)
	 = \rho(x^{-1} p x \, x^{-1} q  x \, t^{-1} w t)$$
and since $\rho(x^{-1} w' x) = \rho(t^{-1} w t)$ the identity among relations that follows from (\ref{ML4.2B5}) is
	$$x^{-1} q x \centerdot t^{-1} w t \equiv x^{-1} p^{-1} x \centerdot x^{-1} p x \centerdot x^{-1} q x  \centerdot t^{-1} w t,$$	
which is basic, proving the first part of the lemma. If $d = u*v$ then $e_1 = 1$, which implies that $w' = w$.

The second part follows by noting that the words $a a^{-1}$ and $e_1^{-1} e_1$ are canceled in $d*w$ by virtue of Proposition \ref{puzo}, the word $a e_1 e_1^{-1} a^{-1}$ is canceled in $q*w'$ and $a e_1 \neq 1$. 

\smallskip

Now suppose that $e_1 a = 1$. This implies that $t = x$, $e_2 = x d_2$, $p = $, $q_0 = b^{-1} d_3$, $d = x d_2 d_3$ and $w = w_1 x^{-1}$. 

 We have that
	\begin{multline} \label{ML4.2B5'} \rho(w p) = \rho(w_1 x^{-1} x d_2 b) = \rho(w_1 d_2 b) = \rho(w_1 d_1 d_1^{-1} d_2 b) =  
		\\ \rho((w*d) d_3^{-1} d_2^{-1} d_2 b) = \rho((w*d) d_3^{-1} b) = \rho((w*d) q_0^{-1}).\end{multline}
By (\ref{ML4.2Bv}) we have that $w*d = \rho(w p q_0)$, so the identity among relations that follows from (\ref{ML4.2B5'}) is
	$$w \centerdot p \equiv w \centerdot p \centerdot q_0 \centerdot q_0^{-1},$$	
which is basic. By setting $q := p$, $p := q_0$, $w' :=w$ this proves the first part of the lemma. To prove the second part we observe that the non-empty word $x^{-1} x$ is canceled $w'*q$ and in $w*d$.

\bigskip

\textbf{2B6)} \hspace{0.1cm}
\begin{tabular}{|c|c|c|}
	\hline
	\rule{0pt}{2.3ex}
	$e_1$ & $e_2$ & $\,\,\,\,\,\,\, e_3 \,\,\,\,\,\,\,$ \\  
	\hline
\end{tabular}

\hspace{11.6mm}
\begin{tabular}{|c|c|c|}
	\hline 
	\rule{0pt}{2.3ex}
	$\,\,\,\,\,\,\,\,\,\, t \,\,\,\,\,\,\,\,\,\hspace{0.1mm}$ & $d_1$ & $a$ \\
	\hline
\end{tabular}

\medskip

There exists a word $x$ such that $t = e_1 e_2 x$, $e_3 = x d_1 a$, so $t^{-1} = x^{-1} e_2^{-1} e_1^{-1}$ and $q_0 = b^{-1} x d_1 a e_1$, $d = e_1 e_2 x d_1 a$, $w = a^{-1} w_1 x^{-1} e_2^{-1} e_1^{-1}$.


The word $w' := e_1^{-1} a^{-1} w_1 x^{-1} e_2^{-1} = \rho(e_1^{-1} w e_1)$ is a cyclic permutations of $w$ and we have that
	\begin{multline} \label{ML4.2B6} \rho(w' p) = \rho(e_1^{-1} a^{-1} w_1 x^{-1} e_2^{-1} e_2 b) = \\ 
	\rho(e_1^{-1} a^{-1} w_1 x^{-1} b) = \rho(e_1^{-1} a^{-1} w_1 d_1 d_1^{-1} x^{-1} b) = \\ \rho(e_1^{-1} a^{-1} (w*d) a e_1 e_1^{-1} a^{-1} d_1^{-1} x^{-1} b) = \rho(e_1^{-1} a^{-1} (w*d) a e_1 q_0^{-1}).\end{multline}
By (\ref{ML4.2Bv}) we have that 
	$$\rho(e_1^{-1} a^{-1} (w*d) a e_1) = \rho(e_1^{-1} a^{-1} a w e_1 p q_0 e_1^{-1} a^{-1} a e_1) = \rho(e_1^{-1} w e_1 p q_0),$$
ao the identity among relations that follows from (\ref{ML4.2B6}) is
	$$e_1^{-1} w e_1 \centerdot p \equiv e_1^{-1} w e_1 \centerdot p \centerdot q_0  \centerdot q_0^{-1},$$	
which is basic. By setting $q := p$, $p := q_0$ this proves the first part of the lemma. If $d = u*v$ then $e_1 = 1$ which implies that $w' = w$.

For the second part we observe that the non-empty word $e_2^{-1} e_2$ is canceled in $w'*q$ and in $w*d$.

\bigskip

\textbf{3A)} $d = w_1^{-1} s (d*w) s^{-1} a = c_2 c_1$, $w = a^{-1} w_1$, $q_0 = p^{-1} r c_1 c_2 r^{-1}$. If $d = u*v$ then $c_2 = 1$.

From (\ref{4.3}) and (\ref{4.A}) we have that
	\begin{equation} \label{ML4.3A} s (d*w) s^{-1} = \rho(w_1 c_1^{-1} r^{-1} p q_0 r c_1 w w_1^{-1})\end{equation}
and that
	\begin{equation} \label{ML4.3A'} s (d*w) s^{-1} = \rho(w_1 c_2 r^{-1} p q_0 r c_2^{-1} w w_1^{-1}).\end{equation}	
From (\ref{4.3}) and (\ref{4.A'}) we have that
\begin{equation} \label{ML4.3A''} s (d*w) s^{-1} = \rho(w_1 (c_2 r^{-1} p) q_0 p (p^{-1} r c_2^{-1}) w w_1^{-1}).\end{equation}


\medskip

The word equation $c_2 c_1 = w_1^{-1} s (d*w) s^{-1} a$ leads to three different possible solutions.

\bigskip

\textbf{3A1)} \hspace{0.1cm}
\begin{tabular}{|c|c|}
	\hline
	\rule{0pt}{2.3ex}
	$\,\,\,\,\,\,\,\,\,\,\,\,\,\,\,\,\,\,\,\,\,\,\, c_2 \,\,\,\,\,\,\,\,\,\,\,\,\,\,\,\,\,\,\,\,\,\,$ & $c_1$ \\  
	\hline
\end{tabular}

\hspace{11.8mm}
\begin{tabular}{|c|c|c|}
	\hline 
	\rule{0pt}{2.7ex}
	$w_1^{-1}$ & $s (d*w) s^{-1}$ & $\,\,\, a \,\,\hspace{0.1mm}$ \\
	\hline
\end{tabular}

\medskip

There exists a word $a_1$ such that $c_2 = w_1^{-1} s (d*w) s^{-1} a_1$, $a = a_1 c_1$, so $a^{-1} = c_1^{-1} a_1^{-1}$ and $q_0 = p^{-1} r c_1 w_1^{-1} s (d*w) s^{-1} a_1 r^{-1}$, $d = w_1^{-1} s (d*w) s^{-1} a_1 c_1$, $w = c_1^{-1} a_1^{-1} w_1$. Since $d*w \neq 1$ and since $d*w$ is a subword of $c_2$ then it is not possible that $d = u*v$.


First we suppose that $c_1^{-1} w_1 \neq 1$. The words $q := s (d*w) s^{-1} a_1 r^{-1} p^{-1} r c_1 w_1^{-1}$ and $w' := w_1 c_1^{-1} a_1^{-1} = \rho(w_1 w w_1^{-1})$ are cyclic permutations of $q_0$ and $w$ respectively.

Since $\rho(s (d*w) s^{-1} a_1) = w_1 c_2$ then 
	$$\rho(qw') = \rho(s (d*w) s^{-1} a_1 r^{-1} p^{-1} r c_1 w_1^{-1} w_1 c_1^{-1} a_1^{-1})=$$
	$$\rho(w_1 c_2 r^{-1} p^{-1} r a_1^{-1}) = \rho\big((w_1 c_2 r^{-1}) p^{-1} (r c_2^{-1} w_1^{-1}) w_1 c_2 a_1^{-1}\big) =$$
	$$\rho\big((w_1 c_2 r^{-1}) p^{-1} (r c_2^{-1} w_1^{-1}) s (d*w) s^{-1} a_1 a_1^{-1}\big) =$$
	$$\rho\big((w_1 c_2 r^{-1}) p^{-1} (r c_2^{-1} w_1^{-1}) s (d*w) s^{-1}\big),$$
thus
	\begin{equation} \label{ML4.3A1} \rho(w_1^{-1} qw' w_1) = \rho\big((c_2 r^{-1}) p^{-1} (r c_2^{-1}) w_1^{-1} s (d*w) s^{-1} w_1\big).\end{equation}
Since $q_0 = \rho\big((s (d*w) s^{-1} a_1 r^{-1})^{-1} q (s (d*w) s^{-1} a_1 r^{-1})\big)= \rho\big((r c_2^{-1} w_1^{-1}) q (w_1 c_2 r^{-1})\big)$, by (\ref{ML4.3A''}) we have that
	$$w_1^{-1} s (d*w) s^{-1} w_1 = \rho\big(w_1^{-1} w_1 (c_2 r^{-1} p) q_0 p (p^{-1} r c_2^{-1}) w w_1^{-1} w_1\big) =$$
	$$\rho\big((c_2 r^{-1} p) (r c_2^{-1} w_1^{-1}) q (w_1 c_2 r^{-1}) p (p^{-1} r c_2^{-1}) w\big) =$$
	$$\rho\big((c_2 r^{-1}) p (r c_2^{-1}) \, w_1^{-1} q w_1 \, w\big),$$	
so the identity among relations that follows from (\ref{ML4.3A1}) is
	$$w_1^{-1} q w_1 \centerdot w \equiv (c_2 r^{-1}) p^{-1} (r c_2^{-1}) \centerdot (c_2 r^{-1}) p (r c_2^{-1} w_1) \centerdot w_1^{-1} q w_1 \centerdot w,$$	
which is basic, proving the first part of the lemma. The second part follows by noting that the words $c_1 c_1^{-1}$ and $w_1 w_1^{-1}$ are canceled in $d*w$ by virtue of Proposition \ref{puzo}, the word $c_1 w_1^{-1} w_1 c_1^{-1}$ is canceled in $q*w'$ and $c_1 w_1^{-1} \neq 1$.

\smallskip

Now suppose that $c_1^{-1} w_1 = 1$. Then $w = a_1^{-1}$, $q_0 = p^{-1} r s (d*w) s^{-1} w^{-1} r^{-1}$ and $d = s (d*w) s^{-1} w^{-1}$. The word $q := r^{-1} p^{-1} r s (d*w) s^{-1} w^{-1}$ is a cyclic permutation of $q_0$ and we have that
	\begin{equation} \label{ML4.3A1'} \rho(qw) = \rho(r^{-1} p^{-1} r s (d*w) s^{-1} w^{-1} w) = \rho(r^{-1} p^{-1} r s (d*w) s^{-1}).\end{equation}
Since $q_0 = \rho(r q r^{-1})$, by (\ref{ML4.3A}) we have that
	$$s (d*w) s^{-1} = \rho(r^{-1} p q_0 r w) = \rho(r^{-1} p r q r^{-1} r w) = \rho(r^{-1} p r q w),$$
so the identity among relations that follows from (\ref{ML4.3A1'}) is
	$$q \centerdot w \equiv r^{-1} p^{-1} r \centerdot r^{-1} p r \centerdot q \centerdot w,$$	
which is basic, proving the first part of the lemma. The second part follows by noting that the non-empty word $w^{-1} w$ is canceled in $d*w$ and $q*w$.

\bigskip

\textbf{3A2)} \hspace{0.1cm}
\begin{tabular}{|c|c|}
	\hline
	\rule{0pt}{2.3ex}
	$\hspace{8 mm} c_2 \hspace{7 mm}$ & $\hspace{5.25 mm} c_1 \hspace{5.25 mm}$ \\  
	\hline
\end{tabular}

\hspace{11.8mm}
\begin{tabular}{|c|c|c|}
	\hline 
	\rule{0pt}{2.7ex}
	$w_1^{-1}$ & $s (d*w) s^{-1}$ & $a$ \\
	\hline
\end{tabular}

\medskip

There exist words $x, y$ such that $c_2 = w_1^{-1} x$, $s (d*w) s^{-1} = x y$, $c_1 = y a$, so $q_0 = p^{-1} r y a w_1^{-1} x r^{-1}$ and $d = w_1^{-1} xya$. 

The words $q := x r^{-1} p^{-1} r y a w_1^{-1}$ and $w' := w_1 a^{-1} = \rho(w_1 w w_1^{-1})$ are cyclic permutations of $q_0$ and $w$ respectively and
	$$\rho(qw') = \rho(x r^{-1} p^{-1} r y a w_1^{-1} w_1 a^{-1}) = \rho(x r^{-1} p^{-1} r y),$$
so since $yx = \rho(x^{-1} s (d*w) s^{-1} x)$ we have that 
	\begin{equation} \label{ML4.3A2} \rho(x^{-1} qw' x) = \rho(r^{-1} p^{-1} r y x) = \rho(r^{-1} p^{-1} r \, x^{-1} s (d*w) s^{-1} x).\end{equation}	
Since $q_0 = \rho(r x^{-1} q x r^{-1})$, $\rho(x^{-1} w_1 c_2) = 1$ and $c_2 = w_1^{-1} x$, by (\ref{ML4.3A'}) we have that
	$$\rho(x^{-1} s (d*w) s^{-1} x) = \rho(x^{-1} w_1 c_2 r^{-1} p q_0 r c_2^{-1} w w_1^{-1} x) = \rho(r^{-1} p r x^{-1} q x c_2^{-1} w c_2).$$	
Since $\rho(x^{-1} w' x) = \rho(c_2^{-1} w c_2)$, the identity among relations that follows from (\ref{ML4.3A2}) is
	$$x^{-1} q x \centerdot c_2^{-1} w c_2 \equiv r^{-1} p^{-1} r \centerdot r^{-1} p r \centerdot x^{-1} q x \centerdot c_2^{-1} w c_2,$$	
which is basic, proving the first part of the lemma. If $d = u*v$ then $c_2 = 1$, therefore $w_1 = 1$ which implies that $w' = w$.

The second part follows by noting that the words $a a^{-1}$ and $w_1 w_1^{-1}$ are canceled in $d*w$ by virtue of Proposition \ref{puzo}, the word $a w_1^{-1} w_1 a^{-1}$ is canceled in $q*w'$ and $a w_1^{-1} \neq 1$ since $w = a^{-1} w_1$ and $w \neq 1$.

\bigskip

\textbf{3A3)} \hspace{0.1cm}
\begin{tabular}{|c|c|}
	\hline
	\rule{0pt}{2.3ex}
	$c_2$ & $\hspace{12.75 mm} c_1  \hspace{12.75 mm}$ \\  
	\hline
\end{tabular}

\hspace{11.8mm}
\begin{tabular}{|c|c|c|}
	\hline 
	\rule{0pt}{2.7ex}
	$w_1^{-1}$ & $s (d*w) s^{-1}$ & $a$ \\
	\hline
\end{tabular}

\medskip

There exists a word $x$ such that $w_1^{-1} = c_2 x$, $c_1 = x s (d*w)  s^{-1} a$, thus $w_1 = x^{-1} c_2^{-1}$, $q_0 = p^{-1} r x s (d*w) s^{-1} a c_2 r^{-1}$, $d = c_2 x s (d*w) s^{-1} a$, $w = a^{-1} x^{-1} c_2^{-1}$.

First we suppose that $a c_2 \neq 1$. The words $q := r^{-1} p^{-1} r x s (d*w) s^{-1} a c_2$ and $w' := c_2^{-1} a^{-1} x^{-1} = \rho(c_2^{-1} w c_2)$ are cyclic permutations of $q_0$ and $w$ respectively and
	\begin{multline} \label{ML4.3A3} \rho(qw') = \rho(r^{-1} p^{-1} r x s (d*w) s^{-1} a c_2 c_2^{-1} a^{-1} x^{-1}) =\\
	\rho(r^{-1} p^{-1} r x s (d*w) s^{-1} x^{-1}).\end{multline} 	
Since $\rho(x w_1 c_2) = 1$, $q_0 = \rho(r q r^{-1})$ and $\rho(w_1^{-1} x^{-1}) = c_2$, by (\ref{ML4.3A'}) we have that  
	$$\rho(x s (d*w) s^{-1} x^{-1}) = \rho(x w_1 c_2 r^{-1} p q_0 r c_2^{-1} w w_1^{-1} x^{-1}) = \rho(r^{-1} p r q c_2^{-1} w c_2),$$
so the identity among relations that follows from (\ref{ML4.3A3}) is
	$$q \centerdot c_2^{-1} w c_2 \equiv r^{-1} p^{-1} r \centerdot r^{-1} p r \centerdot q \centerdot c_2^{-1} w c_2,$$	
which is basic, proving the first part of the lemma. If $d = u*v$ then $c_2 = 1$, which implies that $w' = w$.

The second part follows by noting that the words $a a^{-1}$ and $c_2^{-1} c_2$ are canceled in $d*w$ by virtue of Proposition \ref{puzo}, the word $a c_2 c_2^{-1} a^{-1}$ is canceled in $q*w'$ and $a c_2 \neq 1$.

\smallskip

Now we suppose that $a c_2 = 1$. Then $w = w_1 = x^{-1}$, $d = w^{-1} s (d*w) s^{-1}$ and $q_0 = p^{-1} r w^{-1} s (d*w) s^{-1} r^{-1}$. We have that $\rho(wd) = s (d*w) s^{-1}$, so $w*d = d*w$. The word $q := w^{-1} s (d*w) s^{-1} r^{-1} p^{-1} r$ is a cyclic permutation of $q_0$ and 
	\begin{equation} \label{ML4.3A3'} \rho(w q) = \rho(w w^{-1} s (d*w) s^{-1} r^{-1} p^{-1} r) = \rho(s (w*d) s^{-1} r^{-1} p^{-1} r).\end{equation}	
Since $q_0 = \rho(p^{-1} r q r^{-1} p)$ by (\ref{ML4.3A''}) we have that
	 $$s (w*d) s^{-1} = \rho(w_1 (r^{-1} p) q_0 p (p^{-1} r) w w_1^{-1}) =$$
	 $$\rho(w (r^{-1} p) p^{-1} r q r^{-1} p p (p^{-1} r) w w^{-1}) = \rho(w q r^{-1} p r),$$
so the identity among relations that follows from (\ref{ML4.3A3'}) is
	$$w \centerdot q \equiv w \centerdot q \centerdot r^{-1} p r \centerdot r^{-1} p^{-1} r,$$	
which is basic. By setting $w' := w$ we prove the first part of the lemma. The second part follows by noting that non-empty word $w w^{-1}$ is canceled in $w*d$ and $q*w'$.

\bigskip

\textbf{3B)} $d = w_1^{-1} s (d*w) s^{-1} a = e_1 e_2 e_3$, $w = a^{-1} w_1$, $p = e_2 b$, $q_0 = b^{-1} e_3 e_1$ for words $b, e_1, e_2, e_3$ such that $b, e_2, e_3 e_1 \neq 1$. If $d = u*v$ then $e_1 = 1$.

From (\ref{4.3}) and (\ref{4.B}) we have that
	\begin{equation} \label{ML4.3B} s (d*w) s^{-1} = \rho(w_1 e_1 p q_0 e_1^{-1} w w_1^{-1})\end{equation}	
and that
	\begin{equation} \label{ML4.3B'} s (d*w) s^{-1} = \rho(w_1 e_3^{-1} e_2^{-1} p q_0 e_2 e_3 w w_1^{-1}).\end{equation}	
From (\ref{4.3}) and (\ref{4.B'}) we have that
	\begin{equation} \label{ML4.3B''} s (d*w) s^{-1} = \rho(w_1 e_3^{-1} b q_0 p b^{-1} e_3 w w_1^{-1})\end{equation}	
and that
	\begin{equation} \label{ML4.3B'''} s (d*w) s^{-1} = \rho(w_1 e_1 e_2 b q_0 p b^{-1} e_2^{-1} e_1^{-1} w w_1^{-1}).\end{equation}	
From (\ref{4.4}) and (\ref{4.B}) we have that
	\begin{equation} \label{ML4.3B'v}\alpha (w*d) \alpha^{-1} = \rho(a^{-1} s (d*w) s^{-1} a) = \rho(w e_1 p q_0 e_1^{-1}).\end{equation}

\medskip

The word equation $e_1 e_2 e_3 = w_1^{-1} s (d*w) s^{-1} a$ leads to six different possible solutions.

\bigskip

\textbf{3B1)} \hspace{0.1cm}
\begin{tabular}{|c|c|c|}
	\hline
	\rule{0pt}{2.3ex}
	$e_1$ & $e_2$ & $\hspace{12 mm} e_3  \hspace{12 mm}$ \\  
	\hline
\end{tabular}

\hspace{11.6mm}
\begin{tabular}{|c|c|c|}
	\hline 
	\rule{0pt}{2.7ex}
	$\hspace{3.25 mm} w_1^{-1} \hspace{3.25 mm}$ & $s (d*w) s^{-1}$ & $a$ \\
	\hline
\end{tabular}

\medskip

There exists a word $x$ such that $w_1^{-1} = e_1 e_2 x$ and $e_3 = x s (d*w) s^{-1} a$, so $d = e_1 e_2 x s (d*w) s^{-1} a$. This implies that $q_0 = b^{-1} x s (d*w) s^{-1} a e_1$ and $w_1 = x^{-1} e_2^{-1} e_1^{-1}$, thus $w = a^{-1} x^{-1} e_2^{-1} e_1^{-1}$. 

The word $w' := e_1^{-1} a^{-1} x^{-1} e_2^{-1} = \rho(e_1^{-1} w e_1)$ is a cyclic permutation of $w$ and by (\ref{ML4.3B'v}) we have that
	\begin{multline} \label{ML4.3B1+++} \rho(w' p) = \rho(e_1^{-1} a^{-1} x^{-1} e_2^{-1} e_2 b) = \rho(e_1^{-1} a^{-1} x^{-1} b) = \\
	\rho(e_1^{-1} a^{-1} x^{-1} b) = \rho(e_1^{-1} a^{-1} s (d*w) s^{-1} a e_1 e_1^{-1} a^{-1} x^{-1} b) = 
	\\ \rho(e_1^{-1} a^{-1} s (d*w) s^{-1} a e_1 q_0^{-1}) = \rho(e_1^{-1} \alpha (w*d) \alpha^{-1} e_1 q_0^{-1}).\end{multline}
By (\ref{ML4.3B'v}) we have that 
	$$\rho(e_1^{-1} \alpha (w*d) \alpha^{-1} e_1) = \rho(e_1^{-1} w e_1 p q_0 e_1^{-1} e_1) = \rho(e_1^{-1} w e_1 p q_0),$$
so the identity among relations that follows from (\ref{ML4.3B1+++}) is
	$$e_1^{-1} w e_1 \centerdot p \equiv e_1^{-1} w e_1 \centerdot p \centerdot q_0 \centerdot q_0^{-1},$$	
which is basic. By setting $q := p$ and $p := q_0$ this proves the first part of the lemma. If $d = u*v$ then $e_1 = 1$, which implies that $w' = w$.

For the second part we observe that the non-empty word $e_2^{-1} e_2$ is canceled in $w'*q$ and $w*d$.

\bigskip

\textbf{3B2)} \hspace{0.1cm}
\begin{tabular}{|c|c|c|}
	\hline
	\rule{0pt}{2.3ex}
	$e_1$ & $e_2$ & $\hspace{8.8 mm} e_3  \hspace{8.8 mm}$ \\  
	\hline
\end{tabular}

\hspace{11.6mm}
\begin{tabular}{|c|c|c|}
	\hline 
	\rule{0pt}{2.7ex}
	$w_1^{-1}$ & $s (d*w) s^{-1}$ & $a$ \\
	\hline
\end{tabular}

\medskip

There exist words $x, y, z$ such that $w_1^{-1} = e_1 x$, $e_2 = xy$, $s (d*w) s^{-1} = yz$, $e_3 = z a$, so $d = e_1 x s (d*w) s^{-1} a$, $p = xyb$ and $q_0 = b^{-1} z a e_1$. This implies that $p^{-1} = b^{-1} y^{-1} x^{-1}$, $w_1 = x^{-1} e_1^{-1}$ and thus $w = a^{-1} x^{-1} e_1^{-1}$. 


First let us assume that $a e_1 \neq 1$. The word $w' := e_1^{-1} a^{-1} x^{-1} = \rho(e_1^{-1} w e_1)$ is a cyclic permutation of $w$ and by setting $q := q_0$ we have
	\begin{multline} \label{ML4.3B2} \rho(qw') = \rho(b^{-1} z a e_1 e_1^{-1} a^{-1} x^{-1}) = \rho(b^{-1} z x^{-1}) = \\
	\rho(b^{-1} y^{-1} x^{-1} x y z x^{-1}) = \rho(p^{-1} x s (d*w) s^{-1} x^{-1}). \end{multline}	
Since $\rho(x w_1 e_1) = 1$ and $\rho(w_1^{-1} x^{-1}) = e_1$, by (\ref{ML4.3B}) we have that
	$$\rho(x s (d*w) s^{-1} x^{-1}) = \rho(x w_1 e_1 p q_0 e_1^{-1} w w_1^{-1} x^{-1}) = \rho(p q e_1^{-1} w e_1),$$
so the identity among relations that follows from (\ref{ML4.3B2}) is
	$$q \centerdot e_1^{-1} w e_1 \equiv p^{-1} \centerdot p \centerdot q \centerdot e_1^{-1} w e_1,$$	
which is basic, proving the first part of the lemma. If $d = u*v$ then $e_1 = 1$, which implies that $w' = w$.

The second part follows by noting that the words $a a^{-1}$ and $e_1^{-1} e_1$ are canceled in $d*w$ by virtue of Proposition \ref{puzo}, the word $a e_1 e_1^{-1} a^{-1}$ is canceled in $q*w'$ and $a e_1 \neq 1$. 

\smallskip
	
Now let $a e_1 = 1$. This implies that $q_0 = b^{-1} z$, $w = w_1 = x^{-1}$, $p = w^{-1} y b$ and $e_2 = w^{-1} y$.

The words $p_1 := y b w^{-1} = \rho(y b p b^{-1} y^{-1})$ and $q_1 := z b^{-1}$ are cyclic permutations of $p$ and $q_0$ respectively and $q_1^{-1} := b z^{-1}$. We have that 
	\begin{multline} \label{ML4.3B2'} \rho(p_1 w) = \rho(y b w^{-1} w) = \rho(yb) = \rho(y b z^{-1} y^{-1} y z) = \\
	\rho(y q_1^{-1} y^{-1} s (d*w) s^{-1}).\end{multline}
Since $\rho(w e_2) = y$, $q_0 = \rho(b^{-1} q_1 b)$, $w = w_1$ and $\rho(y e_2^{-1}) = w$, by (\ref{ML4.3B'''}) we have that
	$$s (d*w) s^{-1} = \rho(w e_2 b q_0 p b^{-1} e_2^{-1} w w_1^{-1}) = \rho(y b b^{-1} q_1 b p b^{-1} e_2^{-1}) =$$
	$$\rho(y q_1 y^{-1} (y b) p (b^{-1} y^{-1}) y e_2^{-1}) = \rho(y q_1 y^{-1} p_1 w),$$
so the identity among relations that follows from (\ref{ML4.3B2'}) is
	$$p_1 \centerdot w \equiv y q_1^{-1} y^{-1} \centerdot y q_1 y^{-1} \centerdot p_1 \centerdot w,$$	
which is basic. By setting $q := p_1$ and $p := q_1$ this proves the first part of the lemma. For the second part we observe that the non-empty word $w^{-1} w$ is canceled in $q*w'$ and since $d = w s (d*w) s^{-1}$ it also canceled in $d*w$ by virtue of Proposition \ref{puzo}. 


\bigskip

\textbf{3B3)} \hspace{0.1cm}
\begin{tabular}{|c|c|c|}
	\hline
	\rule{0pt}{2.3ex}
	$e_1$ & $\hspace{10.5 mm} e_2 \hspace{10.5 mm}$ & $e_3$ \\  
	\hline
\end{tabular}

\hspace{11.6mm}
\begin{tabular}{|c|c|c|}
	\hline 
	\rule{0pt}{2.7ex}
	$w_1^{-1}$ & $s (d*w) s^{-1}$ & $\hspace{1.7 mm} a \hspace{1.7 mm}$ \\
	\hline
\end{tabular}

\medskip

There exist words $x, y$ such that $w_1^{-1} = e_1 x$, $e_2 = x s (d*w) s^{-1} y$, $a = y e_3$, so $d = e_1 x s (d*w) s^{-1} y e_3$, $p = x s (d*w) s^{-1} y b$ and $p^{-1} = b^{-1} y^{-1} s (d*w)^{-1} s^{-1} x^{-1}$. 

We have that $a^{-1} = e_3^{-1} y^{-1}$ and $w_1 = x^{-1} e_1^{-1}$, so $w = e_3^{-1} y^{-1} x^{-1} e_1^{-1}$. 

The word $w' := e_1^{-1} e_3^{-1} y^{-1} x^{-1} = \rho(e_1^{-1} w e_1)$ is a cyclic permutation of $w$ and by setting $q := q_0$ we have
	\begin{multline} \label{ML4.3B3} \rho(qw') = \rho(b^{-1} e_3 e_1 e_1^{-1} e_3^{-1} y^{-1} x^{-1}) = \rho(b^{-1} y^{-1} x^{-1}) = \\
	\rho(b^{-1} y^{-1} s (d*w)^{-1} s^{-1} x^{-1} x s (d*w) s^{-1} x^{-1}) = 
	\rho(p^{-1} x s (d*w) s^{-1} x^{-1}).\end{multline}
Since $\rho(x w_1 e_1) = 1$ and $\rho(w_1^{-1} x^{-1}) = e_1$, by (\ref{ML4.3B}) we have that
	$$\rho(x s (d*w) s^{-1} x^{-1}) = \rho(x w_1 e_1 p q_0 e_1^{-1} w w_1^{-1} x^{-1}) 
	 = \rho(p q e_1^{-1} w e_1),$$
so the identity among relations that follows from (\ref{ML4.3B3}) is
	$$q \centerdot e_1^{-1} w e_1 \equiv p^{-1} \centerdot p \centerdot q \centerdot e_1^{-1} w e_1,$$	
which is basic, proving the first part of the lemma. If $d = u*v$ then $e_1 = 1$, which implies that $w' = w$.

The second part follows by noting that since $d = e_1 x s (d*w) s^{-1} y e_3$ and $w = e_3^{-1} y^{-1} x^{-1} e_1^{-1}$, the words $e_3 e_3^{-1}$ and $e_1^{-1} e_1$ are canceled in $d*w$ by virtue of Proposition \ref{puzo}, the word $e_3 e_1 e_1^{-1} e_3^{-1}$ is canceled in $q*w'$ and $e_3 e_1 \neq 1$.

\bigskip

\textbf{3B4)} \hspace{0.1cm}
\begin{tabular}{|c|c|c|}
	\hline
	\rule{0pt}{2.3ex}
	$\hspace{4 mm} e_1 \hspace{4 mm}$ & $\hspace{2 mm} e_2 \hspace{2 mm}$ & $\hspace{2.75 mm} e_3 \hspace{2.75 mm}$ \\  
	\hline
\end{tabular}

\hspace{11.6mm}
\begin{tabular}{|c|c|c|}
	\hline 
	\rule{0pt}{2.7ex}
	$w_1^{-1}$ & $s (d*w) s^{-1}$ & $a$ \\
	\hline
\end{tabular}

\medskip

There exists words $x, y$ such that $e_1 = w_1^{-1} x$, $s (d*w) s^{-1} = x e_2 y$, $e_3 = y a$, which implies that $q_0 = b^{-1} y a w_1^{-1} x$. The words $q := x b^{-1} y a w_1^{-1}$ and $w' := w_1 a^{-1} = \rho(w_1 w w_1^{-1})$ are cyclic permutations of $q_0$ and $w$ respectively and we have that
	\begin{multline} \label{ML4.3B4} \rho(q w') = \rho(x b^{-1} y a w_1^{-1} w_1 a^{-1}) = \rho(x b^{-1} y) = \\
	\rho(x b^{-1} e_2^{-1} x^{-1} x e_2 y) = \rho(x p^{-1} x^{-1} s (d*w) s^{-1}).\end{multline}
Since $\rho(w_1 e_1) = x$, $q_0 = \rho(x^{-1} q x)$ and $\rho(x e_1^{-1}) = w_1$, by (\ref{ML4.3B}) we have that
	$$s (d*w) s^{-1} = \rho(w_1 e_1 p q_0 e_1^{-1} w w_1^{-1}) = \rho(x p x^{-1} q x e_1^{-1} w w_1^{-1}) = \rho(x p x^{-1} q w_1 w w_1^{-1}),$$
so the identity among relations that follows from (\ref{ML4.3B4}) is
	$$q \centerdot w_1 w w_1^{-1} \equiv x p^{-1} x^{-1} \centerdot x p x^{-1} \centerdot q \centerdot w_1 w w_1^{-1},$$	
which is basic, proving the first part of the lemma. If $d = u*v$ then $e_1 = 1$, thus $w_1 = 1$ which implies that $w' = w$.

The second part follows by noting that since $d = w_1^{-1} s (d*w) s^{-1} a$ and $w = a^{-1} w_1$, the words $a a^{-1}$ and $w_1 w_1^{-1}$ are canceled in $d*w$ by virtue of Proposition \ref{puzo}, the word $a w_1^{-1} w_1 a^{-1}$ is canceled in $q*w'$ and $a w_1 \neq 1$.

\bigskip

\textbf{3B5)} \hspace{0.1cm}
\begin{tabular}{|c|c|c|}
	\hline
	\rule{0pt}{2.3ex}
	$\hspace{4 mm} e_1 \hspace{4 mm}$ & $\hspace{7 mm} e_2  \hspace{7 mm}$ & $e_3$ \\  
	\hline
\end{tabular}

\hspace{11.6mm}
\begin{tabular}{|c|c|c|}
	\hline 
	\rule{0pt}{2.7ex}
	$w_1^{-1}$ & $s (d*w) s^{-1}$ & $\hspace{2.2 mm} a \hspace{2.2 mm}$ \\
	\hline
\end{tabular}

\medskip

There exist words $x, y, a_1$ such that $e_1 = w_1^{-1} x$, $s (d*w) s^{-1} = xy$, $e_2 = y a_1$, $a = a_1 e_3$, thus $p = y a_1 b$, $q_0 = b^{-1} e_3 w_1^{-1} x$, $d = w_1^{-1} x y a_1 e_3$. We have that $p^{-1} = b^{-1} a_1^{-1} y^{-1}$ and that $a^{-1} = e_3^{-1} a_1^{-1}$, therefore $w = e_3^{-1} a_1^{-1} w_1$.



First let us assume that $e_3^{-1} w_1 \neq 1$. The words $q := x b^{-1} e_3 w_1^{-1}$ and $w' := w_1 e_3^{-1} a_1^{-1} = \rho(w_1 w w_1^{-1})$ are cyclic permutations of $q_0$ and $w$ respectively and we have that
	\begin{multline} \label{ML4.3B5} \rho(qw') = \rho(x b^{-1} e_3 w_1^{-1} w_1 e_3^{-1} a_1^{-1}) = \rho(x b^{-1} a_1^{-1}) = \\
 	\rho(x b^{-1} a_1^{-1} y^{-1} x^{-1} x y) = \rho(x p^{-1} x^{-1} s (d*w) s^{-1}).\end{multline}
Since $\rho(w_1 e_1) = x$, $q_0 = \rho(x^{-1} q x)$ and $\rho(x e_1^{-1}) = w_1$, by (\ref{ML4.3B}) we have that
	$$s (d*w) s^{-1} = \rho(w_1 e_1 p q_0 e_1^{-1} w w_1^{-1}) = \rho(x p x^{-1} q x e_1^{-1} w w_1^{-1}) = \rho(x p x^{-1} q w_1 w w_1^{-1}),$$
so the identity among relations that follows from (\ref{ML4.3B5}) is
	$$q \centerdot w_1 w w_1^{-1} \equiv x p^{-1} x^{-1} \centerdot x p x^{-1} \centerdot q \centerdot w_1 w w_1^{-1},$$	
which is basic, proving the first part of the lemma. If $d = u*v$ then $e_1 = 1$, thus $w_1 = 1$ which implies that $w' = w$.

The second part follows by noting that since $d = w_1^{-1} x y a_1 e_3$ and $w = e_3^{-1} a_1^{-1} w_1$, the words $a_1 a_1^{-1}$ and $w_1 w_1^{-1}$ are canceled in $d*w$ by virtue of Proposition \ref{puzo}, the word $e_3 w_1^{-1} w_1 e_3^{-1}$ is canceled in $q*w'$ and $e_3 w_1 \neq 1$.

\smallskip

Now let $e_3^{-1} w_1 = 1$. This implies that $q_0 = b^{-1} x$ and $w = a_1^{-1}$, therefore $p = y w^{-1} b$. The words $p_1 := b y w^{-1} = \rho(b p b^{-1})$ and $q_1 := x b^{-1}$ are cyclic permutations of $p$ and $q_0$ respectively and $q_1^{-1} = b x^{-1}$. We have that
	\begin{equation} \label{ML4.3B5'} \rho(p_1 w) = \rho(b y w^{-1} w) = \rho(b y) = \rho(b x^{-1} x y) = \rho(q_1^{-1} s (d*w) s^{-1}).\end{equation}
Since $q_0 = \rho(b^{-1} q_1 b)$, by (\ref{ML4.3B''}) we have that
	$$s (d*w) s^{-1} = \rho(b q_0 p b^{-1} w) = \rho(b b^{-1} q_1 b p b^{-1} w) = \rho(q_1 p_1 w),$$
so the identity among relations that follows from (\ref{ML4.3B5'}) is
	$$p_1 \centerdot w \equiv q_1^{-1} \centerdot q_1 \centerdot p_1 \centerdot w,$$	
which is basic. By setting $w' := w$, $q := p_1$ and $p := q_1$, this proves the first part of the lemma. For the second part we observe that the non-empty word $w^{-1} w$ is canceled in $d*w$ and $q*w'$. 

\bigskip

\textbf{3B6)} \hspace{0.1cm}
\begin{tabular}{|c|c|c|}
	\hline
	\rule{0pt}{2.3ex}
	$\hspace{15 mm} e_1 \hspace{15 mm}$ & $e_2$ & $e_3$ \\  
	\hline
\end{tabular}

\hspace{11.6mm}
\begin{tabular}{|c|c|c|}
	\hline 
	\rule{0pt}{2.7ex}
	$w_1^{-1}$ & $s (d*w) s^{-1}$ & $\hspace{6.2 mm} a \hspace{6.2 mm}$ \\
	\hline
\end{tabular}

\medskip

There exists a word $a_1$ such that $e_1 = w_1^{-1} s (d*w) s^{-1} a_1$, $a = a_1 e_2 e_3$, thus $q_0 = b^{-1} e_3 w_1^{-1} s (d*w) s^{-1} a_1$. We have that $a^{-1} = e_3^{-1} e_2^{-1} a_1^{-1}$, therefore $w = e_3^{-1} e_2^{-1} a_1^{-1} w_1$. Since $d*w \neq 1$ and since $d*w$ is a subword of $e_1$ then it is not possible that $d = u*v$.

The words $p_1 := b e_2 = \rho(b p b^{-1})$, $w' := e_2^{-1} a_1^{-1} w_1 e_3^{-1} = \rho(e_3 w e_3^{-1})$ and $q_1 := w_1^{-1} s (d*w) s^{-1} a_1 b^{-1} e_3$ are cyclic permutations of $p$, $w$ and $q_0$ respectively and $q_1^{-1} = e_3^{-1} b a_1^{-1} s (d*w)^{-1} s^{-1} w_1$. We have that
	$$\rho(p_1 w') = \rho(b e_2 e_2^{-1} a_1^{-1} w_1 e_3^{-1}) = \rho(b a_1^{-1} w_1 e_3^{-1}),$$ 
so
	\begin{multline} \label{ML4.3B6} \rho(e_3^{-1} p_1 w' e_3) = \rho(e_3^{-1} b a_1^{-1} w_1) = \\
	\rho(e_3^{-1} b a_1^{-1} s (d*w)^{-1} s^{-1} w_1 w_1^{-1} s (d*w) s^{-1} w_1) = \\
	\rho(q_1^{-1} w_1^{-1} s (d*w) s^{-1} w_1).\end{multline}
Since $q_0 = \rho(b^{-1} e_3 q_1 e_3^{-1} b)$, by (\ref{ML4.3B''}) we have that
	$$\rho(w_1^{-1} s (d*w) s^{-1} w_1) = \rho(w_1^{-1} w_1 e_3^{-1} b q_0 p b^{-1} e_3 w w_1^{-1} w_1) =$$ 
	$$\rho(e_3^{-1} b b^{-1} e_3 q_1 e_3^{-1} b p b^{-1} e_3 w) = \rho(q_1 e_3^{-1} p_1 e_3 w). $$ 
Since $\rho(e_3^{-1} w' e_3) = w$, the identity among relations that follows from (\ref{ML4.3B6}) is
	$$e_3^{-1} p_1 e_3 \centerdot w \equiv q_1^{-1} \centerdot q_1 \centerdot e_3^{-1} p_1 e_3 \centerdot w,$$	
which is basic. By setting $q:= p_1$ and $p:= q_1$ this proves the first part of the lemma. For the second part we observe that the non-empty word $e_2 e_2^{-1}$ is canceled in $q*w'$ and since $d = e_1 e_2 e_3$ and $w = e_3^{-1} e_2^{-1} a_1^{-1} w_1$ it is canceled in $d*w$.


\appendix

\section{Van Kampen diagrams} \label{sectVKdiag}

In this section we briefly introduce some notions of van Kampen diagrams. The material presented here is quite standard, except the observation made in Remark \ref{remVK} that we have not been able to find it in the numerous references on van Kampen diagrams.

This section is only needed in Remark \ref{nonHomeoVK} and the reader can skip it if s/he is not interested in the interpretation of Theorem \ref{mainTheor2} in terms of diagrams.

\smallskip

A van Kampen diagram (see for example \cite{Short} for a thorough introduction) is a planar 2-cell complex that can be associated with any relator of a group presentation. A relator is the reduced form of a product of conjugates of basic relators, and with the non-reduced product of these basic relators can be associated a van Kampen diagram in the form of a ``bouquet of lollipops". 

Indeed let $r_1, \cdots, r_n$ be basic relators of a group presentation and let $a_1, \cdots, a_n$ be words. With the word $w := a_1 r_1 a_1^{-1} \cdots a_n r_n a_n^{-1}$ can be associated the van Kampen diagram of Figure \ref{fig:lollipop}.

\begin{figure}[h]
	\begin{picture}(80, 140) 
		\put(170, 19){\line(-1, 1){38}}
		\put(170, 19){\line(1, 5){11}}
		\put(170, 19){\line(1, 0){58}}

		\put(147, 25){$a_1$}
		\put(162, 40){$a_2$}
		\put(183, 11){$a_n$}

		\put(170, 19){\circle*{3}}
		\put(170, 19){\circle{5}}

		\put(132, 57){\circle*{3}}
		\put(181, 74){\circle*{3}}
		\put(228, 19){\circle*{3}}

		\put(119, 72){\circle{38}}
		\put(185, 94){\circle{38}}
		\put(248, 20){\circle{38}}

		\put(112, 96){$r_1$}
		\put(180, 118){$r_2$}
		\put(243, 44){$r_n$}

		\put(105, 72){\vector(0, 1){4}}
		\put(120, 72){\oval(30, 30)[bl]}

		\put(167.5, 94){\vector(0, 1){4}}
		\put(180, 94){\oval(25, 25)[bl]}

		\put(242, 33.5){\vector(1, 0){4}}
		\put(243, 21){\oval(25, 25)[tl]}

		\put(152, 38){\vector(-1, 1){4}}
		\put(151, 37){\vector(-1, 1){4}}
		\put(150, 39){\vector(-1, 1){4}}

		\put(176.4, 50){\vector(0, 1){4}}
		\put(177.4, 50){\vector(0, 1){4}}
		\put(176.9, 51.5){\vector(0, 1){4}}

		\put(202, 18.9){\vector(1, 0){4}}
		\put(200.9, 19.3){\vector(1, 0){4}}
		\put(200.9, 18.5){\vector(1, 0){4}}

		\put(176, 23){$\iddots$}
		\put(190, 34){$\iddots$}
		\put(204, 45){$\iddots$}
		\put(218, 56){$\iddots$}
	\end{picture}
	
		\caption{A bouquet of lollipops}
		\label{fig:lollipop}
\end{figure}
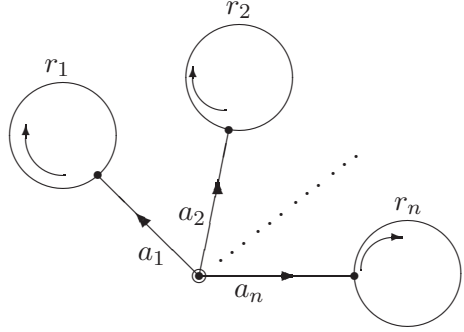

Each letter of $w$ corresponds to an edge of the van Kampen diagram and every time there is a cancellation in $w$, there is a folding of the two edges corresponding to the two canceled letters. This folding does not change other elements of the cell complex (see the proof of Prop. 2.3, pag. 166, in \cite{Short}). Sometimes in this process two or more faces making up a 2-sphere are discarded in order to keep the cell complex planar: this reduces the number of faces (see \cite{Short}, Fig. 2.6).

When all the cancellations have been made, the obtained word is $\rho(w)$, which is reduced, and is the word that can be read in the boundary of the van Kampen diagram. We say that $\rho(w)$ is the \textit{boundary label} of the diagram. That boundary forms a cycle (i.e., its initial and final points coincide) and starts with the edge labeled by the first letter of $w$, so the initial point of that edge is the initial point of the van Kampen diagram.

For example let $a_1 = x_1 x_2$, $r_1 = x_3$, $a_2 = x_1 x_4$,  $r_2 = x_5 x_6$, $a_3 = x_1 x_4$,  $r_3 = x_6^{-1} x_7$, that is 
		$$w := a_1 r_1 a_1^{-1} a_2 r_2 a_2^{-1} a_3 r_3 a_3^{-1} =$$ 
			$$(x_1 x_2) x_3 (x_2^{-1} x_1^{-1}) \, (x_1 x_4) x_5 x_6 (x_4^{-1} x_1^{-1}) \, (x_1 x_4) x_6^{-1} x_7 (x_4^{-1} x_1^{-1}).$$
Then $\rho(w) = x_1 x_2 x_3 x_2^{-1} x_4 x_5 x_7 x_4^{-1} x_1^{-1}$ and the van Kampen diagram corresponding to $\rho(w)$ obtained with the procedure described above is that of Figure \ref{fig:vKRed}.

\begin{figure}[h]
	\begin{picture}(80, 100) 
		\put(198, 9){\line(-1, 1){38}}
		\put(178, 29){\line(1, 1){48}}

		\put(147, 62){\circle{38}}
		\put(212, 63){\circle{38}}

		\put(179, 11){$x_1$}
		\put(159, 31){$x_2$}
		\put(113, 66){$x_3$}
		\put(192, 37){$x_4$}
		\put(179, 66){$x_5$}

		\put(198, 9){\circle*{3}}
		\put(198, 9){\circle{5}}

		\put(160, 47){\circle*{3}}
		\put(178, 29){\circle*{3}}
		\put(198, 49){\circle*{3}}
		\put(226, 77){\circle*{3}}

		\put(207, 72){$x_6$}
		\put(235, 66){$x_7$}

		\put(138, 84.5){\vector(1, 0){4}}
		\put(138, 72){\oval(25, 25)[tl]}

		\put(203, 84.5){\vector(1, 0){4}}
		\put(203, 72){\oval(25, 25)[tl]}

		\put(225, 43.5){\vector(-1, 0){4}}
		\put(224, 56){\oval(25, 25)[br]}

		\put(187.1, 18.9){\vector(-1, 1){4}}
		\put(188.2, 20){\vector(-1, 1){4}}
		\put(186.1, 21){\vector(-1, 1){4}}

		\put(168.9, 37){\vector(-1, 1){4}}
		\put(170, 38){\vector(-1, 1){4}}
		\put(167.9, 39){\vector(-1, 1){4}}

		\put(213.2, 64.2){\vector(-1, -1){4}}
		\put(215, 65){\vector(-1, -1){4}}
		\put(214, 66){\vector(-1, -1){4}}

		\put(186, 37){\vector(-1, -1){4}}
		\put(186.3, 37.6){\vector(-1, -1){4}}
		\put(186.7, 37.3){\vector(-1, -1){4}}
	\end{picture}
	
		\caption{The van Kampen diagram of a reduced relator}
		\label{fig:vKRed}
\end{figure}
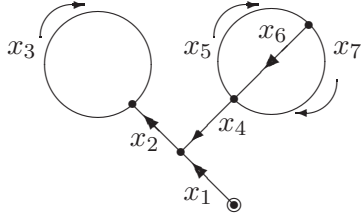

As it can be seen, the word $\rho(w)$ can be read from the boundary cycle of the diagram. The edges not belonging to faces, called \textit{spines}, are traversed twice, in both ways; by convention the label of an edge is the opposite when the edge is traversed in the direction opposite to the depicted arrow. The edges belonging to faces and not internal are traversed once. The internal edges of the faces are not part of the boundary.

The boundary labels of the faces are $r_1, r_2, r_3$, with the convention that the label of an internal edge is the opposite when traversed in the direction opposite to the depicted arrow.

\begin{remark} \label{remVK} \rm While the reduced form of $w$ is the same no matter the sequence of cancellations that are done, that is not true for the associated van Kampen diagram. That is two van Kampen diagrams obtained from the same bouquet of lollipops of Figure \ref{fig:lollipop} by means of two different sequences of cancellations of $w$ could be non-homeomorphic. Actually they could even not have the same number of faces.

This is not the case for the example of Figure \ref{fig:vKRed}, because the cancellations are independent, i.e., any sequence of cancellations from $w$ to $\rho(w)$ contains the same cancellations possibly in a different order. By what seen above, a folding does not change other elements of the complex, so in that case two diagrams corresponding to two different sequences of cancellations are homeomorphic. 

But let $r_1 = x y$, $r_2 = y^{-1} x^{-1}$, $r_3 = x z$ and $a_1 = a_2 = a_3 = 1$. Then $w = x y y^{-1} x^{-1} x z$. After canceling $y y^{-1}$ we obtain the word $w_1 = x x^{-1} x z$. Now if we cancel the first two letters we obtain a van Kampen diagram with a single face, since the faces corresponding to $r_1$ and $r_2$ make up a 2-sphere which is discarded. Instead if we cancel the second and third letter we obtain a van Kampen diagram with three faces. The two van Kampen diagrams have the same boundary label but are not homeomorphic even if they correspond to the reduced form of the same bouquet of lollipops.

In Figures \ref{fig:vKLeft} and \ref{fig:vKRight} of Remark \ref{nonHomeoVK} we show an example of two van Kampen diagrams that correspond to the same bouquet of lollipops, that have the same faces but that are not homeomorphic. 
\end{remark}

\begin{remark} \label{shifVK} \rm Let $\delta$ be a van Kampen diagram that has no spines, let $u$ be its boundary label and let $u_1, u_2$ be words such that $u = u_1 u_2$. Let us given a spine labeled by either $u_1^{-1}$ or $u_2$ and let us join the endpoint of this spine with the initial point of $\delta$. Then the new diagram obtained has label either $u_1^{-1} u_1 u_2 u_1$ or $u_2 u_1 u_2 u_2^{-1}$.

In the first case by folding all the edges corresponding to $u_1^{-1} u_1$ we obtain a diagram homeomorphic to $\delta$, thus without spines, and labeled by $u_2 u_1$, that is the same as $\delta$ but with initial point shifted by $|u_1|$ letters of the boundary cycle. In the second case by folding all the edges corresponding to $u_2 u_2^{-1}$ we still obtain a diagram that is the same as $\delta$ and with initial point shifted by $|u_1|$ letters of the boundary cycle.
\end{remark}

\section{Identities among relations} \label{sectIAR}

This section deals with identities among relations. Some of the material in this section can also be found in Appendix A of \cite{FirstArticle} and Appendix A of \cite{SecondArticle}, but we have included here in order to make the paper self-contained. 

\smallskip


Let $a_1, \cdots, a_m, r_1, \cdots, r_m, b_1, \cdots, b_n, s_1, \cdots, s_n$ be words such that the equality 
	$$\rho(a_1 r_1 a_1^{-1} \cdots a_m r_m a_m^{-1}) = \rho(b_1 s_1 b_1^{-1} \cdots b_n s_n b_n^{-1})$$ 
holds. Then we say that we have an \textit{identity among relations involving $r_1, \cdots, r_m, s_1^{-1}, \cdots, s_n^{-1}$} and we denote it 
	\begin{equation} \label{idamre2} a_1 r_1 a_1^{-1} \centerdot \cdots \centerdot a_m r_m a_m^{-1} \equiv b_1 s_1 b_1^{-1} \centerdot \cdots \centerdot b_n s_n b_n^{-1}.	\end{equation} 
If $n = 0$, that is the right hand side is 1, then we say that the identity is in \textit{normal form}.

Identities among relations are special types of word equations. They arise in the context of group presentations, but we will use them without involving an explicit group presentation. In particular an identity among relations involving $r_1, \cdots, r_m$ is an identity among relations for any group presentation having $r_1, \cdots, r_m$ as relators. The last claim is obvious if the $r_i$ are basic relators. If some of the $r_i$ are non-basic relators, then the claim follows from Remark \ref{iarRepl}.

\begin{remark} \label{iarRepl} \rm Let us suppose that (\ref{idamre2}) holds and that for some $i$ we have that the reduced form of $r_i$ is equal to the reduced form of $c_1 t_1 c_1^{-1} \cdots c_k t_k c_k^{-1}$ for some words $c_1, t_1, \cdots c_k, t_k$. Then by replacing in (\ref{idamre2}) the term $a_i r_i a_i^{-1}$ with $d_1 t_1 d_1^{-1} \centerdot \cdots \centerdot d_k t_k d_k^{-1}$, where $d_j = a_i c_j$, we obtain an identity among relations involving $s_1, \cdots, s_n, t_1, \cdots, t_k$ and all the $r_h$ except for $h = i$.
\end{remark}

\begin{definition} \label{}  \rm We say that the identities 
	$$a_2 r_2 a_2^{-1} \centerdot \cdots \centerdot a_m r_m a_m^{-1} \equiv a_1 r_1^{-1} a_1^{-1} \centerdot b_1 s_1 b_1^{-1} \centerdot \cdots \centerdot b_n s_n b_n^{-1},$$
	$$a_1 r_1 a_1^{-1} \centerdot \cdots \centerdot a_{m-1} r_{m-1} a_{m-1}^{-1} \equiv b_1 s_1 b_1^{-1} \centerdot \cdots \centerdot b_n s_n b_n^{-1} \centerdot a_m r_m a_m^{-1},$$
	$$b_1 s_1^{-1} b_1^{-1} \centerdot a_1 r_1 a_1^{-1} \centerdot \cdots \centerdot a_m r_m a_m^{-1} \equiv b_2 s_2 b_2^{-1} \centerdot \cdots \centerdot b_n s_n b_n^{-1}$$
and
	$$a_1 r_1 a_1^{-1} \centerdot \cdots \centerdot a_m r_m a_m^{-1} \centerdot b_n s_n b_n^{-1} \equiv b_1 s_1 b_1^{-1} \centerdot \cdots \centerdot b_{n-1} s_{n-1} b_{n-1}^{-1}$$
are \textit{1-step equivalent to (\ref{idamre2}}). 

We say that an identity $\iota$ is \textit{equivalent} to an identity $\iota'$ if there exist identities $\iota_1, \cdots, \iota_n$ such that $\iota$ is 1-step equivalent to $\iota_1$, $\iota_i$ is 1-step equivalent to $\iota_{i+1}$ for $i \in \{1, \cdots, n-1\}$ and $\iota_n$ is 1-step equivalent to $\iota'$.
\end{definition}

\begin{remark} \label{difForIds} \rm An identity among relations can be equivalent to more than one identity in normal form; these are called \textit{the normal forms} of that identity. We prove that two normal forms of the same identity are cyclic permutation one of the other.

The proof is by induction on the number of terms in the right hand side of (\ref{idamre2}), where the claim is obvious when that number is 1. Let that number be $k > 1$ and the claim be true when that number is less than $k$. With (\ref{idamre2}) we can associate the following and only these two identities with $k - 1$ terms on the right,
	\begin{equation} \label{cpId1} a_1 r_1 a_1^{-1} \centerdot \dots \centerdot a_m r_m a_m^{-1}  \centerdot b_k s_k b_k^{-1} \equiv b_1 s_1^{-1} b_1^{-1} \centerdot \dots \centerdot b_{k-1} s_{k-1} b_{k-1}^{-1}\end{equation}
and
	\begin{equation} \label{cpId2} b_1 s_1^{-1} b_1^{-1} \centerdot a_1 r_1 a_1^{-1} \centerdot \dots \centerdot a_m r_m a_m^{-1} \equiv b_2 s_2 b_2^{-1} \centerdot \dots \centerdot b_k s_k b_k^{-1}.\end{equation}
Therefore by induction hypothesis the normal forms of the identity (\ref{idamre2}) are two sets of identities such that the elements in each of these sets are cyclic permutations one of the other. These two sets are the cyclic permutations of the identities (\ref{cpId1}) and (\ref{cpId2}).

It remains to prove that any two elements taken one from the first set and the other from the second set are cyclic permutation one of the other. Since being a cyclic permutation is an equivalence relation, it is enough to prove that one specific element of the first set is a cyclic permutation of one specific element of the second set. This is done by taking the following two elements:
 	$$a_1 r_1 a_1^{-1} \centerdot \dots \centerdot a_m r_m a_m^{-1}  \centerdot b_k s_k^{-1} b_k^{-1} \centerdot \dots b_1 s_1^{-1} b_1^{-1} \equiv 1$$
from the first set and 
	$$b_k s_k^{-1} b_k^{-1} \centerdot \dots b_1 s_1^{-1} b_1^{-1} \centerdot a_1 r_1 a_1^{-1} \centerdot \dots \centerdot a_m r_m a_m^{-1} \equiv 1,$$
from the second set. It is trivial to see that these elements are cyclic permutations one of the other.
\end{remark}

\medskip

Let $\langle \, X \, | \, R \, \rangle$ be a presentation for a group $G$, with $X$ the set of generators and $R$ that of basic relators. We will assume without loss of generality that $R$ contains the inverse of any of its elements and the reduced form of the cyclic permutations of any of its elements. If $r_1, \cdots, r_n \in R$ are such that the identity in normal form
	\begin{equation} \label{idamre} a_1 r_1 a_1^{-1} \centerdot \cdots \centerdot a_n r_n a_n^{-1} \equiv 1	\end{equation}
holds, then (\ref{idamre}) determines a product of conjugates of basic relators equal to 1 not only in $G$ but also in $\mathcal{F}(X)$ (we recall that $G$ is a quotient of $\mathcal{F}(X)$).

\bigskip

In order to formalize these notions we introduce some definitions (we will follow \cite{BH}). Let us set $Y := \mathcal{F}(X) \times R$, let us define the inverse of an element $(a, r) \in Y$ as $(a, r^{-1})$ and let us denote $H$ the free monoid on $Y \cup Y^{-1}$. $H$ is the set of finite sequences of elements of $Y$. We denote an element of $H$ as $[(a_1, r_1), \cdots, (a_n, r_n)]$, where $a_i \in \mathcal{F}(X)$ and $r_i \in R$. The trivial element of $H$ is the sequence with zero elements.

Let $h := [(a_1, r_1), \dots, (a_n, r_n)] \in H$ and let $(a, r)$, $(b, s)$ be two consecutive elements $(a_i, r_i), (a_{i+1}, r_{i+1})$ of $h$ for some $i \in \{1, \dots, n - 1\}$, in particular $a = a_i$, $r = r_i$, $b = a_{i+1}$, $s = r_{i+1}$. We define the following transformations on $h$ that change it to another element of $H$:
\begin{itemize}

	\item [--] a \textit{Peiffer deletion} deletes in $h$ the elements $(a, r)$, $(b, s)$ if $a = b$ and $r^{-1} = s$;

	
	\item [--] an \textit{exchange} replaces in $h$ the pair of elements $(a, r)$, $(b, s)$  either with the pair
		$$(b, s), (\rho(b s^{-1} b^{-1} a), r)$$
	(we call it an \textit{exchange of type A at the $i$-th position} or \textit{exchange of type A-$i$}) or with the pair
		$$(\rho(a r a^{-1} b), s), (a, r)$$
	(we call it an \textit{exchange of type B at the $i$-th position} or \textit{exchange of type B-$i$})).	
\end{itemize}
Peiffer deletions and exchanges leave unchanged the $(a_j, r_j)$ for $j \neq i, i+1$.

Given two elements $h_1, h_2 \in H$, we say that \textit{$h_1$ Peiffer collapses to $h_2$} if $h_2$ can be obtained from $h_1$ by applying Peiffer deletions and exchanges.

There is a bijection $\chi$ between $H$ and the set of products of conjugates of elements of $R$ given by associating the element $h = [(a_1, r_1), \dots, (a_n, r_n)] \in H$ with the following product of conjugates of elements of $R$,
	$$a_1 r_1 a_1^{-1} \centerdot \dots \centerdot a_n r_n a_n^{-1}.$$
Also we define a monoid homomorphism $\psi$ from $H$ to $\mathcal{F}(X)$ by $\psi(h) := \rho(a_1 r_1 a_1^{-1} \dots a_n r_n a_n^{-1})$. If $\psi(h) = 1$, that is if $h$ belongs to the kernel of $\psi$, then we say that $h$ determines the identity among relations in normal form (\ref{idamre}). We say that this identity among relations \textit{Peiffer collapses to 1} if $h$ Peiffer collapses to the trivial element of $H$.

The restriction of $\chi$ to the kernel of $\psi$ determines a bijection with the set of identities among relations in normal form involving elements of $R$.

\begin{remark} \label{iarPeiffCons} \rm We have seen in the introduction to this section that if $r_1$, $\cdots$, $r_n$ are relators of a group presentation $\mathcal{P} := \langle \, X \, | \, S \, \rangle$ then an identity among relations involving $r_1, \cdots, r_n$ determines an identity among relations for $\mathcal{P}$, that is an identity involving the basic relators of $\mathcal{P}$.
	
By virtue of the Corollary at page 159 of \cite{BH} we have also that if the identity involving $r_1, \cdots, r_n$ Peiffer collapses to 1 then also the identity involving basic relators determined by it Peiffer collapses to 1.
\end{remark}

\begin{definition} \label{defBasIAR} \rm An identity among relations in normal form is said \textit{basic} if the corresponding element of $H$ collapses to 1 by means of only Peiffer deletions. Let $h := [(a_1, r_1), \dots, (a_n, r_n)]$ be that element; if moreover $a_1 = a_2 = \dots = a_n$ then that identity among relations is said \textit{strictly basic}.
\end{definition}

\noindent \textbf{Example.} The identity among relations $a r a^{-1} \centerdot b s b^{-1} \centerdot b s^{-1} b^{-1} \centerdot a r^{-1} a^{-1} \equiv 1$ is basic. If moreover $a = b$ it is strictly basic.
\medskip

\begin{remark} \label{equCondBas} \rm As before we denote $Y$ the set $\mathcal{F}(X) \times R$ and let us consider the free group on $Y$. Since $H$ is the free monoid on $Y \cup Y^{-1}$, then an identity among relations is basic if and only if the corresponding element of $H$ is 1 in the free group on $Y$.
\end{remark}

\begin{remark} \label{equCondBas2} \rm We now show that if one normal form of an identity is basic, then all the normal forms of that identity are basic.

Indeed by Remark \ref{equCondBas}, an identity among relations is basic if and only if the corresponding element of $\mathcal{M}(Y \cup Y^{-1})$ reduces to 1 in the free group on $Y$. By Remark \ref{difForIds}, two normal forms of the same identity are cyclic permutations one of the other, therefore if one of them reduces to 1 in the free group, all its cyclic permutations reduce to 1 too because a cyclic permutation is a special case of conjugation and in a group the only conjugate to 1 is 1 itself.
\end{remark}

\begin{definition} \label{defBasIAR2} \rm An identity among relations is said \textit{(strictly) basic} if one (and by Remark \ref{equCondBas2} all) of its normal forms is (strictly) basic.
\end{definition}

\begin{remark} \label{remDefBasIAR} \rm Let an identity among relations of the form
	$$a_1 r_1 a_1^{-1} \centerdot \cdots \centerdot a_m r_m a_m^{-1} \equiv b_1 r_1 b_1^{-1} \centerdot \cdots \centerdot b_m r_m b_m^{-1}$$
be basic; then $a_i = b_i$ for every $i = 1, \cdots, m$.
\end{remark}





As stated at the beginning of Section \ref{sectVKdiag}, a reader not interested in the interpretation of Theorem \ref{mainTheor2} in terms of diagrams can skip the parts of Remark \ref{pcrFrom} related to van Kampen diagrams.

\begin{remark} \label{pcrFrom} \rm Let $R$ be a set of reduced words, that is $R \subset \mathcal{F}(X)$. Let us consider the following operations on the elements of $\mathcal{F}(X)$: reduced product, cyclically reduced product, cyclically reduced form, conjugations, reduced form of cyclic permutations. 

Let $\mathcal{N}$ be the normal closure of $R$ in $\mathcal{F}(X)$; then $\mathcal{N}$ is the subset of $\mathcal{F}(X)$ generated by $R$ and by the above operations. Indeed cyclic permutations and the cyclically reduced form are special cases of conjugations and the cyclically reduced product is obtained by composing the cyclically reduced form with the reduced product.

Let $\sigma$ be a sequence of the above listed operations on the elements of $R$ and let $u \in \mathcal{N}$ be the result of $\sigma$. We will show how to associate with $\sigma$ an element $[(a_1, r_1), \cdots, (a_n, r_n)]$ of $H$ with the property that
	$$\rho(a_1 r_1 a_1^{-1} \cdots a_n r_n a_n^{-1}) = u.$$

Moreover if $R$ contains only cyclically reduced words and $\sigma$ is made up by only cyclically reduced products, cyclically reduced forms and the reduced form of cyclic permutations we will show how to associate with $\sigma$ a van Kampen diagram for the presentation $\langle \, X \, | \, R \, \rangle$ whose boundary label is $u$. Indeed we will show that given the bouquet of lollipops corresponding to $a_1 r_1 a_1^{-1} \cdots a_n r_n a_n^{-1}$, the procedure described in Appendix \ref{sectVKdiag} gives a unique van Kampen diagram whose boundary label is $u$. We will also show that this diagram is made only by faces (that is it does not have spines) and that the label of its boundary is cyclically reduced.

\smallskip

- Let us take a sequence of length one. This is an element $r$ of $R$ and we associate with it the element $[(1, r)]$ of $H$. We also associate with it the van Kampen diagram made by a single face labeled by $r$.

We can suppose by induction hypothesis that there is a natural number $k$ such that for each sequence $\sigma$ of length less than $k$ we have associated with $\sigma$ an element of $H$ and (when the additional hypothesis stated above are verified) also a van Kampen diagram, both with the properties specified above.

- Let us given sequences $\sigma, \sigma'$ of length less than $k$ with results respectively $u$ and $u'$. Then by induction hypothesis there exist $r_1, \cdots, r_m$, $s_1, \cdots, s_n \in R$ and $a_1, \cdots, a_m$, $b_1, \cdots, b_n \in \mathcal{F}(X)$ such that we have associated with $\sigma$ an element $[(a_1, r_1), \cdots, (a_m, r_m)] \in H$ such that 
	$$u = \rho(a_1 r_1 a_1^{-1} \cdots a_m r_m a_m^{-1})$$ 
and with $\sigma'$ an element $[(b_1, s_1), \cdots, (b_n, s_n)] \in H$ such that 
	$$u' =\rho(b_1 s_1 b_1^{-1} \cdots b_n s_n b_n^{-1}).$$
Let us consider the sequence $\tau$ having all the operations of $\sigma$ and $\sigma'$ plus the reduced product of $u$ by $u'$. Then we associate with $\tau$ the element
	$$[(a_1, r_1), \cdots, (a_m, r_m), (b_1, s_1), \cdots, (b_n, s_n)] \in H;$$
obviously $\rho(a_1 r_1 a_1^{-1} \cdots a_m r_m a_m^{-1} b_1 s_1 b_1^{-1} \cdots b_n s_n b_n^{-1}) = \rho(u u')$.

Let us suppose that we have associated with $\sigma$ and $\sigma'$ two van Kampen diagrams with boundary labels $u$ and $u'$ and obtained from the bouquets of lollipops corresponding to $a_1 r_1 a_1^{-1} \cdots a_m r_m a_m^{-1}$ and $b_1 s_1 b_1^{-1} \cdots b_n s_n b_n^{-1}$ respectively. Let us consider the van Kampen diagram obtained by making coincide the initial points of these two diagrams: this diagram has boundary label $u u'$. For each cancellation in $u u'$ we fold the edges corresponding to the canceled letters as explained in Appendix \ref{sectVKdiag}. After all cancellations have been made, the diagram obtained has boundary label equal to $\rho(u u')$ and we associate it with $\tau$. Since $u$ and $u'$ are reduced, there is only one sequence of cancellations from $u u'$ to $\rho(u u')$ and thus this procedure gives a unique van Kampen diagram from each pair of van Kampen diagram associated with $\sigma$ and $\sigma'$.

- Now let us consider a sequence $\sigma_1$ having all the operations of $\sigma$ plus the conjugation of $u$ by a word $b$. Then we associate with $\sigma_1$ the element $[(c_1, r_1), \cdots, (c_m, r_m)] \in H$ where $c_i = \rho(b a_i)$. Obviously 
	$$\rho(c_1 r_1 c_1^{-1} \cdots c_m r_m c_m^{-1}) = \rho(b u b^{-1}).$$
Let us suppose that a van Kampen diagram $\delta$ with the properties above has been associated with $\sigma$. Then we consider a spine labeled by $b$ and we join the endpoint of that spine with the initial point of $\delta$. We associate with $\sigma_1$ the van Kampen diagram obtained after making all the foldings corresponding to cancellations of the word $b u b^{-1}$ as explained in Appendix \ref{sectVKdiag}. Since there can be different sequences of cancellations that give $\rho(b u b^{-1})$ from $b u b^{-1}$, this van Kampen diagram is not unique for a given $\delta$. However it is unique if the cancellations in $b u b^{-1}$ are independent, i.e., the different sequences of cancellations from $b u b^{-1}$ to $\rho(b u b^{-1})$ contain the same cancellations possibly in different order (see Remark \ref{remVK}). This happens for example when no letter canceled in $b u$ is canceled also in $u b^{-1}$, in particular if $u$ is cyclically reduced.

- Now let us consider a sequence $\sigma_2$ having all the operations of $\sigma$ plus the cyclically reduced form of $u$. The previous cases show how to associate with $\sigma_2$ an element of $H$ and a van Kampen diagram with the above properties because by virtue of (\ref{scope}) of Proposition \ref{summar} the cyclically reduced form is a special case of conjugation. We observe that a unique van Kampen diagram is obtained from any diagram associated with $\sigma$, because if $u = a \hat{\rho}(u) a^{-1}$ for some word $a$, then $\hat{\rho}(u)$ is obtained from $u$ by conjugating it with $a^{-1}$ and no letter canceled in $a^{-1} u$ is canceled in $u a$.

-Now let us consider a sequence $\sigma_3$ having all the operations of $\sigma$ plus the reduced form of a cyclic permutation of $u$. This means that there exist words $u_1, u_2$ such that $u = u_1 u_2$ and that the last operation of $\sigma_3$ is the conjugation of $u$ by either $u_2$ or by $u_1^{-1}$. This implies that we associate with $\sigma_3$ the element $[(c_1, r_1), \cdots, (c_m, r_m)] \in H$ where $c_i$ for $i = 1, \cdots, m$ can be either equal to $\rho(u_2 a_i)$ or to $\rho(u_1^{-1} a_i)$.

Now let $R$ contain only cyclically reduced words and let $\sigma$ be made up by only cyclically reduced products, cyclically reduced forms and the reduced form of cyclic permutations. By induction hypothesis we can suppose that there is a unique van Kampen diagram $\delta$ associated with $\sigma$, that this diagram is made only by faces and that the label of this diagram, which is $u_1 u_2$, is cyclically reduced. 

Let us given a spine labeled by either $u_1^{-1}$ or $u_2$ and let us join the endpoint of this spine with the initial point of $\delta$. Then the new diagram obtained has label either $u_1^{-1} u_1 u_2 u_1$ or $u_2 u_1 u_2 u_2^{-1}$. Since $u_1 u_2$ is cyclically reduced then the only cancellations done are $u_1^{-1} u_1$ in the first case and $u_2 u_2^{-1}$ in the second case. By Remark \ref{shifVK} in both cases the resulting diagram will be homeomorphic to $\delta$ but with initial point shifted by $|u_1|$ letters of the boundary cycle. Therefore the diagram associated with $\sigma_3$ is unique, is still made only by faces and the label of this diagram is cyclically reduced.

- Finally if $\tau$ is the sequence having all the operations of $\sigma$ and $\sigma'$ plus the cyclically reduced product of $u$ by $u'$, then the previous cases show how to associate with $\tau$ an element of $H$ with the properties stated above because the cyclically reduced product is the composition of the reduced product with the cyclically reduced form.

Now let $R$ contain only cyclically reduced words and let $\sigma$ be made up by only cyclically reduced products, cyclically reduced forms and the reduced form of cyclic permutations. By induction hypothesis we can suppose that there are unique van Kampen diagrams $\delta$ and $\delta'$ associated with $\sigma$ and $\sigma'$, that these diagrams are made only by faces and that the labels of these diagrams are cyclically reduced.

By what seen above there is a unique diagram associated with the reduced product of $u$ by $u'$ and built from $\delta$ and $\delta'$; this diagram contains only faces because $\delta$ and $\delta'$ contain only faces. We know that $\rho(u u') = b \hat{\rho}(u u') b^{-1}$ for some word $b$. By applying the cyclically reduced form to $\rho(u u')$ we obtain a unique van Kampen diagram: this diagram is made only by faces and the label of this diagram is $\hat{\rho}(u u')$, which is cyclically reduced. 
\end{remark}

\begin{remark} \label{seqFromProd} \rm We show how to associate with a product of conjugates of elements of $R$ a sequence of operations on $R$ as described in Remark \ref{pcrFrom}.

Indeed with $a_1 r_1 a_1^{-1} \centerdot \cdots \centerdot a_m r_m a_m^{-1}$ we associate the following sequence: conjugation of $r_1$ with $a_1$; conjugation of $r_2$ with $a_2$; $\cdots$; conjugation of $r_m$ with $a_m$; reduced product of $a_1 r_1 a_1^{-1}$ by $a_2 r_2 a_2^{-1}$; reduced product of $a_1 r_1 a_1^{-1} a_2 r_2 a_2^{-1}$ by $a_3 r_3 a_3^{-1}$; $\cdots$; reduced product of $a_1 r_1 a_1^{-1} \cdots  a_{m-1} r_{m-1} a_{m-1}^{-1}$ by $a_m r_m a_m^{-1}$.

In particular, given words $u$ and $v$, we associate with $u*v$ the product $\alpha u \alpha^{-1} \centerdot \alpha v \alpha^{-1}$, where $\alpha$ is such that $u*v = \rho(\alpha u v \alpha^{-1})$ (see (\ref{scope}) of Proposition \ref{summar}). \end{remark}

\begin{remark} \label{idFrom} \rm Let $u, u' \in \mathcal{N}$ be obtained respectively from sequences $\sigma$ and $\sigma'$ of operations on $R$ as described in Remark \ref{pcrFrom}, in particular in view of Remark \ref{seqFromProd} let $u, u'$ be the reduced forms of products of conjugates of elements of $R$. Let us suppose that $u \sim u'$; we show how to associate with $\sigma$, $\sigma'$ and the equivalence $u \sim u'$ an identity among relations involving elements of $R$.	
	
Indeed the procedure described in Remark \ref{pcrFrom} associates with $\sigma$ and $\sigma'$ elements $h := [(a_1, r_1), \cdots, (a_m, r_m)]$ and $h' := [(b_1, s_1), \cdots, (b_n, s_n)]$ of $H$ such that $\rho(a_1 r_1 a_1^{-1} \cdots a_m r_m a_m^{-1}) = u$ and $\rho(b_1 s_1 b_1^{-1} \cdots b_n s_n b_n^{-1}) = u'$.

If $u \sim v$ then $u$ and $v$ are conjugates and thus there exists a word $c$ such that $u = \rho(c v c^{-1})$. We associate with $\sigma$, $\sigma'$ and the equivalence $u \sim v$ the following identity among relations 
	$$a_1 r_1 a_1^{-1} \centerdot \cdots \centerdot a_m r_m a_m^{-1} \equiv d_1 s_1 d_1^{-1} \centerdot \cdots \centerdot d_n s_n d_n^{-1}$$
where $d_i = c b_i$.	
\end{remark}

\begin{remark} \label{idFrom2} \rm Let $R \subset \mathcal{F}(X)$, let $\sigma$ and $\sigma'$ be sequences of operations on $R$ as described in Remark \ref{pcrFrom}, let $w$ and $w'$ be the results of $\sigma$ and $\sigma'$ respectively and let $w \sim w'$. 
	
Let us suppose that $w \sim w'$ and that the identity among relations that by Remark \ref{idFrom} follows from this equivalence is 
	\begin{equation} \label{idFrom2Eq1} a_1 r_1 a_1^{-1} \centerdot \cdots \centerdot a_m r_m a_m^{-1} \equiv a_{m+1} r_{m+1} a_{m+1}^{-1} \centerdot \cdots \centerdot a_n r_n a_n^{-1}.
	\end{equation}

Let $\sigma''$ be a sequence of operations on $R$ that has all the operations of $\sigma'$ plus a cyclic permutation and let $w''$ be the result of $\sigma''$. This implies that $w' \sim w''$ and by transitivity that $w \sim w''$. We prove that the identity among relations that by Remark \ref{idFrom} follows from $\sigma''$ is
	\begin{equation} \label{idFrom2Eq2} b_1 r_1 b_1^{-1} \centerdot \dots \centerdot b_m r_m b_m^{-1} \equiv b_{m+1} r_{m+1} b_{m+1}^{-1} \centerdot \dots \centerdot b_n r_n b_n^{-1},\end{equation}	
where $b_1, \dots, b_n$ are words such that $b_k = b a^{-1} a_k$ for some words $a, b$ and for $k = 1, \dots, n$.
	
Indeed by the proof of Remark \ref{idFrom}	we have that there exists a word $a$ such that the products of conjugates of elements of $R$ associated with $\sigma$ and $\sigma'$ are $a_1 r_1 a_1^{-1} \centerdot \dots \centerdot a_m r_m a_m^{-1}$ and $c_{m+1} r_{m+1} c_{m+1}^{-1} \centerdot \dots \centerdot c_n r_n c_n^{-1}$ respectively, where $c_j = \rho(a^{-1} a_j)$ for $j = m+1, \dots, n$.
	
From Remark \ref{pcrFrom} we have that there exists a word $b$ such that the product of conjugates of elements of $R$ associated with $\sigma''$ is $b_{m+1} r_{m+1} b_{m+1}^{-1} \centerdot \dots \centerdot b_n r_n b_n^{-1}$, where $b_j = \rho(b c_j) = \rho(b a^{-1} a_j)$ for $j = m+1, \dots, n$.

This means that $w' = \rho(a^{-1} w a)$, $w'' = \rho(b w' b^{-1})$, therefore  $w'' = \rho(b a^{-1} w a b^{-1})$ and by setting $b_i : = b a^{-1} a_i$ for $i = 1, \dots, m$ we have that the identity among relations that by Remark \ref{idFrom} follows from the equivalence $w \sim w''$ is
	$$b_1 r_1 b_1^{-1} \centerdot \dots \centerdot b_m r_m b_m^{-1} \equiv b_{m+1} r_{m+1} b_{m+1}^{-1} \centerdot \dots \centerdot b_n r_n b_n^{-1},$$
where $b_k = b a^{-1} a_k$ for $k = 1, \dots, n$.
\end{remark}

\begin{remark} \label{basicToo} \rm If (\ref{idFrom2Eq1}) is basic then (\ref{idFrom2Eq2}) is basic too. Indeed by Remark \ref{equCondBas2} if (\ref{idFrom2Eq1}) is basic then any normal form of (\ref{idFrom2Eq1}) corresponds to 1 in the free group on $Y$. We have that also any normal form of (\ref{idFrom2Eq2}) corresponds to 1 in the free group on $Y$, because $b_k = b a^{-1} a_k$ for every $k$, so if for some $h, k$ we have that $a_h = a_k$ then $b_h = b_k$.
\end{remark}

\begin{remark} \label{idRev} \rm Let $R \subset \mathcal{F}(X)$ and let $\sigma$ be a sequence of operations on $R$ as described in Remark \ref{pcrFrom}. We define the \textit{reverse} of $\sigma$, denoted $\underline{\sigma}$, by taking the reverse of each operation of $\sigma$. With $\underline{\sigma}$ we can associate a product of conjugates of reverses of elements of $R$ that is the reverse of that associated with $\sigma$. In particular if 
	$$a_1 r_1 a_1^{-1} \centerdot \cdots \centerdot a_m r_m a_m^{-1}$$ 
is the product of conjugates of elements of $R$ associated with $\sigma$ then the one associated with $\underline{\sigma}$ is 
	$$\underline{a_m} \, \underline{r_m} \,\underline{a_m^{-1}} \centerdot \cdots \centerdot \underline{a_1} \, \underline{r_1} \, \underline{a_1^{-1}}.$$
In particular the result of $\underline{\sigma}$ is the reverse of the result of $\sigma$.
	
Now let $\sigma$ and $\sigma'$ be sequences of operations on $R$, let $w$ and $w'$ be the results of $\sigma$ and $\sigma'$ respectively and let $w \sim w'$. We have that $\underline{w}$ and $\underline{w'}$ are the results of $\underline{\sigma}$ and $\underline{\sigma'}$, that $\underline{w} \sim \underline{w'}$ and that the identity among relations that follows from this equivalence is the reverse of that follows from $w \sim w'$ as shown in Remark \ref{idFrom2}. In particular the first identity is basic in $r_1,  \cdots, r_m$ if and only if the second is basic in $\underline{r_1},  \cdots, \underline{r_m}$.	
\end{remark}

\textit{Address:}

Carmelo Vaccaro

Laboratoire de Mathématiques d'Orsay

Université Paris-Saclay 

Bâtiment 307, rue Michel Magat

91400 Orsay

\medskip

\textit{e-mail:} \textsf{carmelo.vaccaro@universite-paris-saclay.fr}

 \end{document}